\documentclass[12pt]{amsart}

\usepackage{hyperref}
\usepackage{amsmath}
\usepackage{amsthm}
\usepackage{amsfonts}
\usepackage{amssymb}
\usepackage{bbm}
\usepackage{tikz}
\usepackage{pgf}
\usepackage{graphicx}
\usepackage{ytableau}
\usepackage{todonotes}
\usepackage[margin = 1.5in]{geometry} 
 \numberwithin{equation}{section}
\setcounter{tocdepth}{1}



\DeclareMathOperator{\st}{\,|\,}

\DeclareMathOperator{\C}{\mathbb{C}}
\DeclareMathOperator{\R}{\mathbb{R}}
\DeclareMathOperator{\Z}{\mathbb{Z}}

\DeclareMathOperator{\s}{\mathfrak{S}}
\DeclareMathOperator{\D}{\mathcal{D}}
\DeclareMathOperator{\cyc}{cyc}
\DeclareMathOperator{\pw}{pw}

\newcommand{\sg}{\mathfrak S}
\newcommand{\idf}{\mathcal{F}}
\newcommand{\idt}{\mathcal{T}}

\newtheorem{theorem}{Theorem}
\numberwithin{theorem}{section}
\newtheorem{lemma}[theorem]{Lemma}
\newtheorem{proposition}[theorem]{Proposition}
\newtheorem{corollary}[theorem]{Corollary}
\newtheorem{conjecture}[theorem]{Conjecture}

\theoremstyle{definition}

\newtheorem{example}[theorem]{Example}
\newtheorem{remark}[theorem]{Remark}

\setcounter{tocdepth}{1}

\begin{document}

\title[Homogenized Linial Arrangement]{The Homogenized Linial Arrangement and Genocchi numbers} 

\author[Lazar]{Alexander Lazar}
\address{Department of Mathematics, University of Miami, Coral Gables, FL 33124}
\email{alazar@math.miami.edu}

\author[Wachs]{Michelle L. Wachs$^{\dag}$}
\address{Department of Mathematics, University of Miami, Coral Gables, FL 33124}
\email{wachs@math.miami.edu}
\thanks{$^{\dag}$Supported in part by NSF Grant
DMS  1502606}

\begin{abstract}  We study the intersection lattice of a hyperplane arrangement recently introduced by Hetyei who showed that the number of regions of the arrangement is a median Genocchi number.  Using a different method, we refine Hetyei's result by providing a combinatorial interpretation of the coefficients  of the characteristic polynomial of the intersection lattice of this arrangement.  We also show that the  M\"obius invariant of the intersection lattice is a (nonmedian) Genocchi number.  The Genocchi numbers  count a class of permutations known as Dummont permutations and the median Genocchi numbers count the derangements in this class.   We show that the signless coefficients of the characteristic polynomial count  Dumont-like permutations with a given number of cycles.
This enables us to derive formulas for the generating function of  the characteristic polynomial, which reduce to known formulas for the generating functions of the Genocchi numbers and the median Genocchi numbers.    As a byproduct of our work, we obtain new models for the Genocchi and median Genocchi numbers.
 \end{abstract}

\keywords{hyperplane arrangement,  characteristic polynomial, Genocchi numbers, Dumont permutations,  Ferrers graphs, surjective staircases}

\date{October 16, 2019}

\maketitle
\tableofcontents

\section{Introduction}

The {\em braid arrangement} (or {\em type A Coxeter arrangement}) is the hyperplane arrangement in $\R^n$, $n \ge 1$, defined by
$$\mathcal A_{n-1} := \{ x_i - x_j = 0 : 1 \le i < j \le n \}.$$  Note that the hyperplanes of $\mathcal A_{n-1}$ divide $\R^n$ into  open cones of the form $$R_\sigma := \{{\bf x} \in \R^n : x_{\sigma(1)} < x_{\sigma(2)}< \dots < x_{\sigma(n)}\},$$
where $\sigma$ is a permutation in the symmetric group $\s_n$. Hence the braid arrangement $\mathcal A_{n-1}$ has  $|\s_n| = n!$ regions.
The regions of the arrangement obtained by intersecting $\mathcal A_2$ with the plane $x+y+z=0$ are shown in the figure below.
  \begin{center}\includegraphics[height=1.3in]{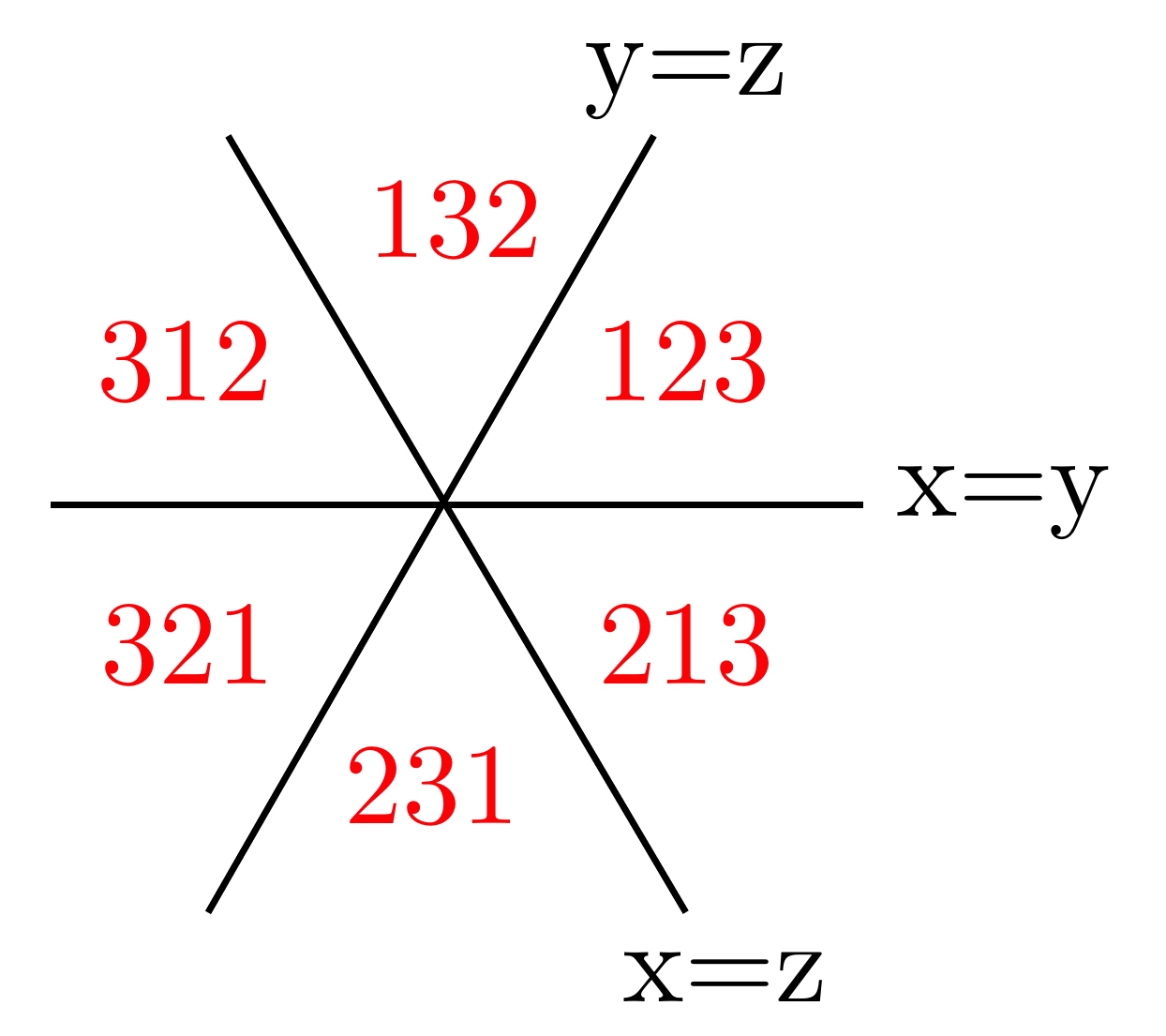}\end{center}

A classical formula of Zaslavsky \cite{Facing_Up_Arrangements} gives the number of regions  of any real hyperplane arrangement  $\mathcal A$ in terms of the M\"obius function of its intersection (semi)lattice $\mathcal L({\mathcal A})$.  Indeed, given any finite, ranked poset $P$ of length $\ell$, with a minimum element $\hat 0$,  the {\it characteristic polynomial} of $P$ is defined to be 
 \begin{equation} \chi_P(t)  :=  \sum_{x \in P}\mu_P(\hat{0},x)t^{\ell-\text{rk}(x)},\end{equation}
  where $\mu_P$ is the M\"obius function of $P$ and $\text{rk}(x)$ is the rank of $x$.  Zaslavsky's formula 
  for the number of regions $r(\mathcal A)$ of $\mathcal A$ is  \begin{equation} \label{zaseq} r(\mathcal{A}) = (-1)^{\ell} \chi_{\mathcal L(\mathcal{A})}(-1). \end{equation}

  It is well known and easy to see that the lattice of intersections of the braid arrangement $\mathcal A_{n-1}$ is isomorphic to  the  lattice $\Pi_n$ of partitions of the set $[n]:=\{1,2\dots,n\}$.  It is also well known that the characteristic polynomial of $\Pi_n$ is given by
\begin{equation} \label{stireq} \chi_{\Pi_n} (t) = \sum_{k=1}^{n} s(n,k) t^{k-1},\end{equation}
  where $s(n,k)$ is the Stirling number of the first kind, which is equal to
  $(-1)^{n-k}$  times the number of permutations in $\s_n$ with exactly $k$ cycles; see \cite[Example 3.10.4]{EC1}. Hence
  $\chi_{\Pi_n}(-1) = (-1)^{n-1} |\s_n|$.  Therefore,  from (\ref{zaseq}), we recover the result observed above that the number of regions of $\mathcal A_{n-1}$ is $n!$.

Deformations of the braid arrangement are often studied in the literature, and in many cases their number of regions is known. Among these deformations are the Linial 
arrangement, Shi arrangement, Catalan arrangement, semiorder arrangement, generic braid arrangement, and semigeneric braid arrangement; see \cite{Stanley_Hyperplanes} 
for a discussion of these arrangements and for a general introduction to the combinatorics of hyperplane arrangements.
The {\em Linial arrangement} is the hyperplane arrangement in $\R^n$ defined by,
$$\{ x_i - x_j = 1 : 1 \le i < j \le n \}.$$
In \cite{defcox} Postnikov and Stanley show that the number of regions of this arrangement is the number of alternating trees on node set $[n+1]$, where a tree is {\em alternating} if each node is either greater than all its neighbors or smaller than all its neighbors.  

Motivated by a problem on enumerating a certain class of tournaments,   Hetyei \cite{Alternation_Acyclic}  introduced the {\em homogenized Linial arrangement}, which 
 is the hyperplane arrangement in 
$$\{ (x_1,\dots, x_n,y_1,\dots,y_{n-1}): x_i \in \R \, \forall i \in [n], \mbox{ and } y_i \in \R \, \forall i \in [n-1]\} = \R^{2n-1}.$$ given by\footnote{Our indexing is justified by the fact  that the length of the intersection lattice of the arrangement is $2n-3$. This is an immediate consequence of Theorem~\ref{bondth}.} $$\mathcal{H}_{2n-3} := \{x_i - x_j = y_i \st 1 \leq i < j \leq n\}  .$$
Note that by intersecting $\mathcal{H}_{2n-3}$ with the subspace $y_1=y_2 =\dots = y_{n-1} =0$, one gets the braid arrangement $\mathcal A_{n-1}$.  Similarly by intersecting  $\mathcal{H}_{2n-3}$ with the subspace $y_1=y_2 =\dots = y_{n-1} =1$, one gets the Linial arrangement in $\R^n$.

In  \cite{Alternation_Acyclic}, Hetyei  proves that 
\begin{equation} \label{heteq} r(\mathcal{H}_{2n-1}) = h_n,\end{equation}
where  $h_n$ is a median Genocchi number,\footnote{In the literature the  median Genocchi number $h_n$ is usually denoted $H_{2n+3}$ or $|H_{2n+3}|$,} 
or equivalently by (\ref{zaseq}) that 
\begin{equation} \label{heteq2} -\chi_{\mathcal L(\mathcal{H}_{2n-1})}(-1) = h_n.\end{equation}
There are numerous characterizations of the median Genocchi numbers in the literature.  One such characterization is given by the following formula for the generating function, which was  obtained by 
Barsky and Dumont  \cite{Barsky_Dumont},
\begin{equation} \label{BDeq} \sum_{n\geq 1}h_{n}x^n = \sum_{n\geq 1}\frac{n!(n+1)!x^{n}}{\prod_{k=1}^n(1+k(k+1)x)}.\end{equation}
A combinatorial characterization in terms of a class of permutations now called Dumont derangements was also given by Barsky and Dumont in \cite{Barsky_Dumont}.   Another was given by  Randrianarivony in   \cite{Randrianarivony_Du_Fo_Poly} 
  in terms of a class of objects called surjective staircases.

Hetyei's proof of (\ref{heteq2}) relies on  the  finite field method of Athanasiadis \cite{Finite_Field_Method}, which is a method for computing  the characteristic polynomial by counting points in the complement of an associated arrangement over a finite field.
Hetyei obtains a recurrence for a refinement of  the associated point count polynomial and relates it to a recurrence of Andrews, Gawronsky, and Littlejohn \cite{Legendre_Stirling} for  Legandre-Stirling numbers.   This yields a formula for  $\chi_{\mathcal L(\mathcal H_{2n-1}) }(-1)$ in terms of the Legandre-Stirling numbers, which is  identical to a formula of Claesson,  Kitaev, Ragnarsson, and Tenner \cite{Boolean_Complex_Ferrers} for the median Genocchi numbers.

In this paper we further study the intersection lattice $\mathcal L(\mathcal H_{2n-1})$ and its
 characteristic polynomial   $\chi_{\mathcal L(\mathcal H_{2n-1}) }(t)$ using an approach quite different from Hetyei's.  The  first few characteristic polynomials  and their values at $t=1$ and $t=0$ are given in the table below. 
 
 \vspace{.1in}
 \small{ \begin{center}\begin{tabular}{c|c|c|c}
  $n$ & $\chi_{\mathcal L(\mathcal{H}_{2n-1})}(t)$  & $t=-1$ &  $t=0$  \\[-2\medskipamount] & &  \\  \hline  & & \\[-2\medskipamount]      
$1$ & $t-1$ & $-2$ & $-1$
\\   
$2$ &$ t^3 - 3t^2 + 3t - 1$   & $-8$ & $-1$
\\   
$3$ &  $t^5 - 6t^4 + 15t^3 - 19t^2 + 12t - 3$  & $-56$ & $-3$
\\ 
$4$ & $t^7 -10t^6 + 45t^5 - 115t^4 + 177t^3 -162t^2 + 81 t  - 17$ & $-608$ & $-17$

\end{tabular} \end{center}}

\vspace{.1in}

 We  begin by showing that the intersection lattice $ \mathcal L(\mathcal H_{2n-1}) $ is isomorphic to the bond lattice of a certain bipartite graph, which belongs to the class of Ferrers graphs introduced by Ehrenborg and van  Willigenburg \cite{Enumerative_Ferrers_graphs}.  This enables us to 
refine Hetyei's result by 
deriving a combinatorial formula for the M\"obius invariant of the  lower intervals of  $\mathcal L(\mathcal H_{2n-1})$  
   in terms of a class of permutations that we call D-permutations, which are similar to the Dumont permutations.
   We obtain the following analog of (\ref{stireq}):
\begin{equation} \label{intorchardumeq} \chi_{\mathcal L(\mathcal H_{2n-1})}(t) = \sum_{k=1}^{2n} s_D(2n,k) t^{k-1},\end{equation}
where $(-1)^{k}s_D(2n,k)$ is equal to  the number of D-permutations on $[2n]$ with exactly $k$ cycles.

 The D-permutations have a simple description.  A permutation $\sigma$ on  $[2n]$ is a {\em D-permutation} if $ i \le \sigma(i) $ whenever $i$ is odd, and $ i \ge \sigma(i) $ whenever $i$ is even.   A consequence of (\ref{intorchardumeq}) and (\ref{heteq2}) is  that the median Genocchi number $h_n$ is equal to the number of D-permutations on $[2n]$.    A particularly interesting feature of this permutation model  for the median Genocchi numbers is that the (nonmedian) Genocchi number\footnote{In the literature the  Genocchi number $g_n$ is usually denoted $G_{2n}$ or $|G_{2n}|$.}  $g_n$ enumerates a subset of the set of  D-permutations on $[2n]$; namely the set of D-cycles on $[2n]$.      This follows from a simple bijection with a  set of  permutations on $[2n-1]$ shown by  Dumont in \cite{Interpretations_Combinatoires} to have cardinality equal to  $g_n$.  This feature is in contrast with the Dumont permutation model  in which the Genocchi numbers enumerate the full set of Dumont permutations on $[2n]$ and the median Genocchi numbers enumerate a subset of the these.
  
Our formula (\ref{intorchardumeq}) therefore implies 
 \begin{equation} \label{intromueq} \mu_{\mathcal L(\mathcal{H}_{2n-1})}(\hat 0, \hat 1) = \chi_{\mathcal L(\mathcal{H}_{2n-1})}(0) = -g_n,\end{equation} where  and 
 $\hat 0$ and $\hat 1$ are the minimum and maximum elements of $\mathcal L (\mathcal H_{2n-1})$, respectively.  Hence the Genocchi numbers, as well as the median Genocchi numbers, play a fundamental role in the study of the homogenized Linial arrangement.

 We recover Hetyei's result (\ref{heteq2}),
 by constructing a bijection from  the full set of D-permutations on $[2n]$ to  a set of surjective staircases  shown by  Randrianarivony  in  \cite{Randrianarivony_Du_Fo_Poly}   to have cardinality equal to the median Genocchi number $h_n$.
Moreover, this bijection  and the theory of surjective staircases enable us to prove two generating function formulas for the characteristic polynomials. One such formula is
\begin{equation}\label{introgenchareq} \sum_{n\geq 1} \chi_{\mathcal L(\mathcal H_{2n-1})}(t) \, x^n =  \sum_{n\geq 1}\frac{  (t-1)_{n-1} (t-1)_{n} \,x^n}{\prod_{k=1}^n(1-k(t-k)x)},\end{equation}
where $(a)_{n}$ denotes the falling factorial  $a(a-1)\cdots (a-n+1)$.  Note that (\ref{introgenchareq})
 reduces to the  Barsky-Dumont formula (\ref{BDeq}) for $\sum_{n\ge 1} h_n x^n$ when $t$ is set equal to $-1$, and to a similar formula of Barsky and Dumont in \cite{Barsky_Dumont} for  $\sum_{n\ge 1} g_n x^n$ when $t$ is set equal to $0$.  
 
 In addition to the D-permutation model,   our study  of the characteristic polynomial leads to other 
interesting combinatorial  models for the median Genocchi numbers and  the Genocchi numbers.  For instance, using our bond lattice result and Hetyei's formula (\ref{heteq2}), we show that a formula of Chung and 
Graham \cite{cover_polynomial} for  chromatic polynomials of  incomparability graphs
 yields another nice permutation model 
 for the median Genocchi numbers: $h_n$ is
 the number of
permutations $\sigma$ on $[2n]$ such that $i > \sigma(i)$ only if $i$ is even and $\sigma(i) $ is odd.  This leads to the conjectural interpretation of 
 $-\chi_{\mathcal L(\mathcal H_{2n-1})}(-t) $ as the enumerator of such permutations by their number of cycles, which would imply that the Genocchi number
$g_n$ is equal to the number of such permutations that are cycles.

The paper is organized as follows.  In Section~\ref{prelimsec} we review some basic material on hyperplane arrangements, bond lattices, and  Genocchi numbers.  Our result that the intersection lattice of the homogenized Linial arrangement is isomorphic to a bond lattice  is proved in Section~\ref{bondsec}.    This bond lattice  has  a nice description as  the induced subposet   of the partition lattice $\Pi_{2n}$ consisting of partitions all of whose nonsingleton blocks have odd minimum and even maximum.   Using Whitney's NBC theorem, we show that the signless M\"obius invariant of each lower interval $[\hat 0, \pi]$ of the bond lattice  enumerates a certain class of alternating forests on $[2n]$.

In Section~\ref{Dsec},  we prove (\ref{intorchardumeq}) and a more general formula for  chromatic polynomials of  general Ferrers graphs by constructing a bijection between the  alternating forests of Section~\ref{bondsec} and the D-permutations on $[2n]$.  We also give some alternative formulas for the characteristic polynomials in terms of D-permutations and we present some of their consequences.  We observe that the characteristic polynomial $\chi_{\mathcal L(\mathcal H_{2n-1})}(t)$ is divisible by $(t-1)^3$ and we give  combinatorial and geometric interpretations  of the polynomial $(t-1)^{-3}\chi_{\mathcal L(\mathcal{H}_{2n-1})}(t)$.

Section~\ref{DtoSsec} contains the bijection from the D-permutations on $[2n]$ to the set of surjective staircases whose cardinality is equal to $h_n$.  
This bijection, the  formulas of Section~\ref{Dsec} for the characteristic polynomial, and the theory of surjective staircases are used  to prove  (\ref{introgenchareq}) and a similar generating function formula.

In Section~\ref{othersec}, we present the   permutation models for the Genocchi numbers and median Genocchi numbers that arise  from the Chung-Graham result on chromatic polynomials.
Further work on a type B analog and a Dowling arrangement generalization, which will appear in a forthcoming paper, is discussed in 
Section~\ref{dowsec}.

Most of the results of this paper and of the forthcoming paper on the Dowling generalization were announced in the extended abstract \cite{Extended_Abstract}.

\section{Preliminaries} \label{prelimsec}

\subsection{Hyperplane Arrangements}

Let $k$ be a field (here $k$ is $\R$ or $\C$). A {\em hyperplane arrangement} $\mathcal{A} \subseteq k^n$ is a finite collection of affine codimension-$1$ subspaces of $k^n$.
The {\em intersection poset} of $\mathcal{A}$ is the poset $\mathcal{L}(\mathcal{A})$ of intersections of hyperplanes in $\mathcal{A}$ (viewed as affine subspaces of $k^n$), partially-ordered by reverse inclusion. If 
$\bigcap_{H \in \mathcal A} H \ne \emptyset$ then the intersection poset is a geometric lattice, otherwise it's a geometric semilattice.

If $\mathcal{A}$ is a real hyperplane arrangement in $\R^n$, then its complement $\R^n\setminus \mathcal{A}$ is disconnected. By the number of regions $r(\mathcal A)$  of $\mathcal A$ we mean the number of  connected components of $\R^n\setminus \mathcal{A}$. 
This number can be detected solely from $\mathcal{L}(A)$ as Zaslavsky's formula (\ref{zaseq}) shows.

\subsection{The Bond Lattice of a Graph} \label{bondsubsec}

Let $G=(V,E)$ be a graph.  Given a subset $B$ of $V$, let $G|_B$ denote the induced subgraph of $G$ with vertex set $B$.  Let $\Pi_V$ denote the lattice of partitions of the set $V$ ordered in the usual way  by reverse refinement. If $V=[n]:=\{1,2,\dots,n\}$ we write $\Pi_n$ for $\Pi_V$.  The {\em bond lattice of $G$} is the induced subposet $\Pi_G$ of  $\Pi_V$ consisting of partitions $\pi = B_1|\cdots|B_k$ such that $G|_{B_i}$ is connected for all $i$.  It is  well known that the chromatic polynomial ${\rm ch}_G(t)$ of $G$ satisfies \begin{equation}  \label{chromchareq} {\rm ch}_G(t) = t \chi_{\Pi_G}(t). \end{equation}

If $V=[n]$, we can also associate to $G$ the \emph{graphic hyperplane arrangement}
$$\mathcal{A}_G = \{x_i - x_j = 0 \st \{i,j\} \in E(G)\} \subseteq \R^n.$$
It is well known   and easy to see that 
\begin{equation} \label{bondeq} \Pi_G \cong \mathcal{L}(\mathcal{A}_G) . \end{equation}

Note that $\Pi_n$ is the bond lattice of the complete graph $K_n$ and that the braid arrangement $\mathcal A_{n-1}$ is the graphical arrangement associated with $K_n$.  Another example is given below.

\begin{center}
\includegraphics{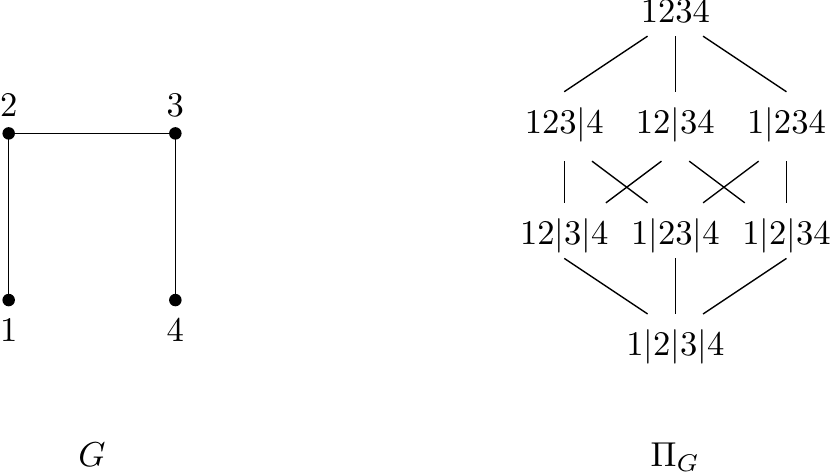}
\end{center}

Broken circuits provide a useful means of computing the M\"obius function of the bond lattice of a graph $G=(V,E)$ (or more generally, of a geometric lattice).  
Fix a total ordering of $E$ and let $S$ be a subset of $E$. Then $S $  is called a {\em broken circuit} if it is the edge set  of a cycle of $G$  with its smallest edge (with respect to this ordering) removed. If $S$ does not contain a broken circuit, we say that $S$ is a {\em non-broken circuit set} or {\em NBC} set.
Clearly the edge set of a cycle always contains a broken circuit. Hence if $S$ is an NBC set,  $(V,S)$ is a forest.  We call  $(V,S)$ for which $S$ is an NBC set an {\em NBC forest} of $G$.

Given any  $S\subseteq E$, let $\pi_S$ be the partition of $V$ whose blocks are the vertex sets of the connected components of the graph $(V,S)$. 
The following formula is due to Whitney
 \cite[Section 7]{Logical_Math}; a generalization for geometric lattices is due to Rota \cite[Pg. 359]{Found_Comb_1}. 
For $\pi \in \Pi_G$,
\begin{equation} \label{RWeq} (-1)^{\text{rk}(\pi)}\mu(\hat{0},\pi) = \#\{\mbox{NBC forests $(V,S)$  of } G :  \pi_S = \pi\}.\end{equation}

Given a  tree $T$ whose node set is a subset of $\Z_{> 0}$, we say the tree is {\em increasing} if when $T$ is rooted at its smallest node, each nonroot vertex  is larger than its parent. A  forest on a subset of $\Z_{> 0}$ is said to be {\em increasing} if it consists of increasing  trees.  Note  that if $G$ is $K_n$ then by ordering the edges lexicographically with the smallest element as the first component, the NBC forests of $G$ are exactly  the increasing forests on $[n]$.  It therefore follows from (\ref{RWeq}) and (\ref{zaseq}) that $r(\mathcal A_{n-1}) $ equals the number of increasing forests on $[n]$, which can easily be shown to be equal to $n!$.

For any tree $T$ (rooted or unrooted), let $|T|$ denote the number of nodes of $T$. 
Recall that a \emph{plane tree} is a rooted tree in which the children of each node are linearly ordered. Let $T$ be a plane tree on a subset of $\Z_{>0}$. When $T$ is drawn in the plane, the children of each vertex $v$ are drawn from left to right according to their linear order.

Now let $T$ be a plane   tree  on a subset of $\Z_{> 0}$.  When drawn in the plane the order is depicted from left to right.  For each vertex $v$, the order of its children determines  a  ``left to right" order of the subtrees rooted at the children.  If  $|T|>1$, let  $T_1,T_2,\dots,T_k$ be the subtrees rooted at the children of the root $r$ of $T$ ordered from ``left to right".  The   {\em postorder word} $\pw(T)$ of $T$ is defined recursively as the concatenation  $$\pw(T) = \pw(T_1) \cdot \pw(T_2)\cdot \,\, \cdots \,\,  \cdot \pw(T_k)\cdot  r,$$
if $|T| > 1$ and by $\pw(T)= r$ if $|T| = 1$.

 Every increasing tree $T$ on node set $V\subset \Z_{> 0}$  can be viewed as a plane  tree on $V$ by rooting it at its smallest node and  then ordering the children of each node in increasing order.  It is not difficult to prove that the map that sends an increasing tree $T$ on $V$ to the permutation  whose cycle form is $(\pw(T))$, is a bijection from the  set of increasing trees to the set of cycles in $\sg_V$. This bijection extends in the obvious way to a bijection from the set of increasing forests on  $V$ to the set of permutations in $\sg_V$ whose cycles correspond to the trees of the forest.  Thus this bijection and (\ref{RWeq}) yield another way of proving  (\ref{stireq}).

 \subsection{Genocchi and median Genocchi Numbers} \label{gensec}

The Genocchi numbers and median Genocchi numbers are classical sequences of numbers, which have been defined in many ways.
Below we take a characterization of Dumont   \cite[Section 6]{Interpretations_Combinatoires} as our definition of the Genocchi numbers and  a characterization of Barsky and Dumont \cite{Barsky_Dumont} as our definition of the median Genocchi numbers.

A {\em Dumont permutation} is a permutation $\sigma \in \mathfrak{S}_{2n}$ such that $2i>\sigma(2i) $ and $2i-1 \le \sigma(2i-1)$ for all $i=1,\dots,n$. A {\em Dumont derangement} is a Dumont permutation without fixed points, i.e.,  $2i > \sigma(2i) $ and $2i-1 < \sigma(2i-1)$ for all $i=1,\dots,n$.

\begin{example}
When $n=2$, the Dumont permutations on $[4]$ (in cycle form) are $$(1,2)(3,4) \qquad (1,3,4,2) \qquad (1,4,2) (3).$$ 
When $n=3$, the Dumont derangements on $[6]$ are:
\begin{center}$\begin{array}{cccc}
(1,3,5,6,4,2) & (1,3,4,2)(5,6) & (1,2)(3,4)(5,6) & (1,2)(3,5,6,4)\\
(1,4,3,5,6,2) & (1,5,6,3,4,2) & (1,5,6,2)(3,4) & (1,4,2)(3,5,6) .
\end{array}$\end{center}
\end{example}

For  $n \ge 1$, the (signless) {\em Genocchi number} $g_n$ is defined to be the number of Dumont permutations on $[2n-2]$,  and for $n \ge 0$, the (signless) {\em median Genocchi number} $h_n$ is defined to be the number of Dumont derangements on $[2n+2]$.  We list the Genocchi numbers and the median Genocchi numbers for small values of $n$ in the table below.

\begin{center}\begin{tabular}{|c|c|c|c|c|c|c|c|}
\hline{\color{red} $n$}  & {\color{red}0} & {\color{red}1}& {\color{red}2}& {\color{red}3} & {\color{red}4}& {\color{red}5} & {\color{red}6}
\\ \hline\hline 
$g_n$ & &$1$ & $1$ & $3$ & $17$ & $155$ & $2073$ 
\\   \hline 
$h_n$ &  $1$ & $2$& $8$ & $56$ & $608$ & $9440$ & $198272$
\\ \hline 
\end{tabular} \end{center}
  Our notation is nonstandard in that  $g_n$  is usually denoted $G_{2n}$ or $|G_{2n}|$, while $h_n$ is usually denoted $H_{2n+3}$ or $|H_{2n+3}|$.
  The Genocchi numbers and median Genocchi numbers are also known as Genocchi numbers of the first and second kind, respectively. 

\vspace{.1in} Another permutation characterization of the Genocchi numbers obtained by Dumont in \cite[Section 6]{Interpretations_Combinatoires} is given by
\begin{equation} \label{otherDumontEq} g_n = |\{\sigma \in \sg_{2n-1} : \forall \,i\in [2n-2], \,\sigma(i) > \sigma(i+1) \mbox{ if and only if $\sigma(i)$ is even} \}|.
\end{equation}

We also mention the following  exponential generating function formula for the Genocchi numbers (see \cite[page 305]{Interpretations_Combinatoires})
$$\sum_{n \ge 1} g_{n} \,\, \frac{x^{2n}}{(2n)!} = x \tan \frac x 2 .$$  
In \cite{Barsky_Dumont}, two formulas for the generating function of the Genocchi numbers and two formulas for the generating function of the median Genocchi numbers are given. We list these four generating function formulas here. 

\begin{eqnarray}  \label{BD2eq}
 \sum_{n\geq 1} g_{n}z^n &=& \sum_{n\geq 1}\frac{(n-1)!\,n! \, z^{n}}{\prod_{k=1}^n(1+k^2z)} 
  \\   \label{BD1eq}\sum_{n\geq 0}h_{n}z^n &=& \sum_{n\geq 0}\frac{n!\,(n+1)!\,z^{n}}{\prod_{k=1}^n(1+k(k+1)z)}
  \\   \label{BD4eq}\sum_{n\geq 0}g_{n+1}z^n &=& \sum_{n\geq 0}\frac{(n!)^2\,z^{n}}{\prod_{k=1}^n(1+k^2z)}
 \\  \label{BD3eq} \sum_{n\geq 1}h_{n-1}z^{n} &=& \sum_{n\geq 1}\frac{(n!) ^2z^n}{\prod_{k=1}^{n}(1+k(k+1)z)}.
\end{eqnarray}
 Equation (\ref{BD4eq}) is due to Carlitz \cite{Carlitz_genocchi} and to Riordan and Stein \cite{Riordan_Stein_genocchi}, and the rest are due to Barsky and Dumont \cite{Barsky_Dumont} .

\section{The intersection lattice is a bond lattice} \label{bondsec}

In this section we show that $\mathcal L(\mathcal H_{2n-1})$ is isomorphic to  the bond lattice of a  bipartite graph. This enables us to give a characterization of the intersection lattice $\mathcal L(\mathcal H_{2n-1})$ as an induced subposet of $\Pi_{2n}$ and to compute its M\"obius function by counting NBC sets.  

 Let $\Gamma_{2n}$ be the bipartite graph on $\{1,3,5,\dots,2n-1\} \sqcup \{2,4,6,\dots, 2n\}$ with an edge $\{2i-1,2j\}$ whenever $ 1 \le i\leq j \le n$.  The graph $\Gamma_6$ is shown below.

\vspace{.1in}

\begin{center}
\includegraphics{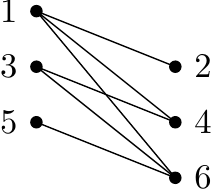}
\end{center}

\vspace{.1in}

More generally for any finite subset $V$ of $\Z_{> 0}$, let $\Gamma_V$ be the bipartite graph on $V$ with edge set $$E=\{\{u,v\} : u<v \in V, \mbox{ $u$ is odd and $v$ is even} \}.$$

\begin{remark} \label{ferrem} The graphs $\Gamma_{V}$ are easily seen to belong to the class of \emph{Ferrers graphs}, which were introduced by Ehrenborg and van Willegenburg in \cite{Enumerative_Ferrers_graphs}.   In fact, it is not much more difficult to show 
that all Ferrers graphs are  of this form.  Ferrers graphs have been further studied in (among others) \cite{Boolean_Complex_Ferrers} and \cite{Selig_Smith_Steingrimsson}. \end{remark}

\begin{theorem} \label{bondth} The poset isomorphism $\mathcal{L}(\mathcal{H}_{2n-1}) \cong \Pi_{\Gamma_{2n}}$ holds for all $n \ge 1$. \end{theorem}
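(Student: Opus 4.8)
The plan is to realize $\mathcal{H}_{2n-1}$ as the preimage of the graphical arrangement $\mathcal{A}_{\Gamma_{2n}}$ under a surjective linear map, and then to invoke the general principle that such a pullback has the same intersection lattice as the arrangement it pulls back from; combining this with the identification $\Pi_{\Gamma_{2n}} \cong \mathcal{L}(\mathcal{A}_{\Gamma_{2n}})$ from (\ref{bondeq}) then yields the theorem. Writing $\mathcal{H}_{2n-1}$ in its defining coordinates $x_1,\dots,x_{n+1},y_1,\dots,y_n$ on $\R^{2n+1}$, with hyperplanes $H_{ij} = \{x_i - x_j = y_i\}$ for $1\le i<j\le n+1$, the first step is to write down the substitution. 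I would define $\phi\colon \R^{2n+1}\to\R^{2n}$, where the target carries coordinates $z_1,\dots,z_{2n}$ indexed by the vertex set $[2n]$ of $\Gamma_{2n}$, by $z_{2i-1} := x_i - y_i$ for $1\le i\le n$ and $z_{2j} := x_{j+1}$ for $1\le j\le n$.

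The key computation, which I would carry out next, is that $\phi$ carries hyperplanes to hyperplanes in exactly the right way. For an edge $\{2i-1,2j\}$ of $\Gamma_{2n}$ (equivalently $1\le i\le j\le n$), the corresponding graphical hyperplane $\{z_{2i-1}=z_{2j}\}$ of $\mathcal{A}_{\Gamma_{2n}}$ pulls back under $\phi$ to $\{x_i - y_i = x_{j+1}\} = \{x_i - x_{j+1} = y_i\} = H_{i,j+1}$. As $(i,j)$ runs over $1\le i\le j\le n$, the index pair $(i,j+1)$ runs over all pairs with $1\le i<j+1\le n+1$, so edge $\mapsto$ hyperplane is a bijection and each hyperplane of $\mathcal{H}_{2n-1}$ is precisely the $\phi$-preimage of its partner; that is, $\mathcal{H}_{2n-1} = \phi^{-1}(\mathcal{A}_{\Gamma_{2n}})$. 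I would also check that $\phi$ is surjective: given $(z_k)$, set $x_{j+1}=z_{2j}$, choose $x_1$ arbitrarily, and solve $y_i = x_i - z_{2i-1}$; thus $\phi$ is a surjective linear map (with one-dimensional kernel).

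The remaining ingredient is the \emph{pullback lemma}: if $\phi\colon V\to W$ is a surjective linear map and $\mathcal{A}$ is an arrangement in $W$, then the assignment $X\mapsto \phi^{-1}(X)$ is an isomorphism of posets from $\mathcal{L}(\mathcal{A})$ onto $\mathcal{L}(\phi^{-1}\mathcal{A})$. The verifications are: every flat of $\phi^{-1}\mathcal{A}$ has the form $\bigcap_k \phi^{-1}(H_k) = \phi^{-1}(\bigcap_k H_k)$, hence is the preimage of a flat of $\mathcal{A}$; surjectivity of $\phi$ gives $\phi(\phi^{-1}(X))=X$, which forces both injectivity of $X\mapsto\phi^{-1}(X)$ and the equivalence $\phi^{-1}(X)\subseteq\phi^{-1}(X')\iff X\subseteq X'$ (so the reverse-inclusion orders correspond); and $\phi^{-1}(X)=\emptyset\iff X=\emptyset$, so nonempty flats match nonempty flats. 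Applying this to the $\phi$ above gives $\mathcal{L}(\mathcal{H}_{2n-1})\cong\mathcal{L}(\mathcal{A}_{\Gamma_{2n}})$, and (\ref{bondeq}) completes the proof.

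I expect the main conceptual point to be the pullback lemma, where surjectivity of $\phi$ must genuinely be used to guarantee that the correspondence both preserves and reflects strict inclusions and nonemptiness of flats (a non-surjective map could collapse distinct flats or manufacture spurious coincidences). The only fiddly part of the bijection itself is the index shift $j\mapsto j+1$ that pairs the $\binom{n+1}{2}$ hyperplanes $H_{ij}$ with the $\binom{n+1}{2}$ edges of $\Gamma_{2n}$; once the substitution $z_{2i-1}=x_i-y_i$, $z_{2j}=x_{j+1}$ is in hand, everything else is a routine verification.
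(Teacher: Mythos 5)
Your proposal is correct, and it is built on exactly the same linear substitution the paper uses --- under $z_{2i-1}=x_i-y_i$ and $z_{2j}=x_{j+1}$ the equation $x_i-x_{j+1}=y_i$ becomes $z_{2i-1}=z_{2j}$ --- but the formal mechanism is different. The paper keeps everything inside $\R^{2n+1}$: it appends an isolated node $2n+1$ to $\Gamma_{2n}$ so that the graphical arrangement also lives in $\R^{2n+1}$, defines a map $\phi$ on \emph{normal vectors} by $e_{2i-1}\mapsto e_i-e_{n+1+i}$ and $e_{2i}\mapsto e_{i+1}$, and then takes $\psi=(A^{-1})^T$ to obtain a linear automorphism of the ambient space carrying the graphical arrangement onto $\mathcal{H}_{2n-1}$, so that the intersection lattices are tautologically isomorphic. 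You instead project onto $\R^{2n}$ by a surjection with one-dimensional kernel, realize $\mathcal{H}_{2n-1}$ as the pullback of $\mathcal{A}_{\Gamma_{2n}}$, and invoke a pullback lemma whose verification you give correctly (surjectivity is indeed precisely what makes $X\mapsto\phi^{-1}(X)$ injective, order-reflecting, and nonemptiness-preserving). The two mechanisms are dual to one another --- your $\phi$ is essentially the transpose of the paper's --- and each buys something: the paper's version needs no statement about non-invertible maps, at the cost of the isolated-vertex padding and the determinant/inverse-transpose bookkeeping, while yours reads the hyperplane-by-hyperplane correspondence directly off the substitution at the cost of proving the (easy) pullback lemma. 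Both are complete proofs.
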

\begin{proof} Let $(e_1,e_2, \dots,e_{2n+1}) $ be the standard basis for $\R^{2n+1}$. For $1\le i \le j \le n$, let
$$ H_{i,j} = \{v \in \R^{2n+1} : (e_{i}-e_{n+1+i} - e_{j+1}) \cdot v = 0 \} $$
and 
$$K_{i,j} = \{v \in \R^{2n+1} : (e_{2i-1}-e_{2j}) \cdot v = 0 \} .$$
Clearly $\{H_{i,j} : 1\le i \le j \le n \}$ is precisely the arrangement $\mathcal H_{2n-1}$. Note that 
$\mathcal K_{2n}:=\{ K_{i,j} : 1\le i \le j \le n \}$ is the graphical arrangement $\mathcal A_{G}$, where $G$ is the graph on $[2n+1]$ whose edge set is equal to $E(\Gamma_{2n})$.  In other words, $G$ is $\Gamma_{2n}$ with an appended  isolated node $2n+1$.  Since adding an  isolated node to a graph yields an isomorphic bond lattice,
 by (\ref{bondeq}) we have $$\mathcal L(\mathcal K_{2n}) \cong \Pi_{\Gamma_{2n}}.$$  

 We will prove the result by producing a vector space isomorphism $\psi: \R^{2n+1} \to \R^{2n+1}$ that takes $\mathcal K_{2n}$ to $\mathcal{H}_{2n-1}$.  Indeed,  such a map will induce an  isomorphism from $\mathcal{L}(\mathcal{K}_{2n})$ to $\mathcal{L}(\mathcal{H}_{2n-1}) $.  
  
 First consider the linear operator $\phi:\R^{2n+1} \to \R^{2n+1}$ defined   on the standard basis by letting 
 $\phi(e_{2i-1}) = e_i - e_{n+1+i}$ and $\phi(e_{2i}) = e_{i+1}$ for all $i \in [n]$ and letting $\phi(e_{2n+1}) = e_{2n+1}$ .  Let $A$ be the matrix of $\phi$ with respect to the standard basis.  One can easily check that $|\det A | = 1$.    Now let $\psi :\R^{2n+1} \to \R^{2n+1}$ be the linear operator whose matrix with respect to the standard basis is the transpose of $A^{-1}$.  Clearly $\psi$ is an isomorphism.  
 
 We claim that $\psi$ takes the hyperplane $K_{i,j}$ to the hyperplane $H_{i,j}$ for all $1\le i \le j\le n$. To prove the claim, let $v \in  K_{i,j}$, so $(e_{2i-1}-e_{2j}) \cdot v = 0$.   We have $$\phi (e_{2i-1}-e_{2j}) \cdot \psi(v) = (e_i - e_{n+1+i} - e_{j+1}) \cdot \psi(v).$$  
 We also have \begin{align*} \phi (e_{2i-1}-e_{2j}) \cdot \psi(v) &=(A (e_{2i-1}-e_{2j})) \cdot ((A^{-1})^T v) \\ &= (A^{-1}A  (e_{2i-1}-e_{2j})) \cdot  v \\ &=  (e_{2i-1}-e_{2j}) \cdot  v \\  &= 0 .\end{align*}
 It follows that $(e_i - e_{n+1+i} - e_{j+1}) \cdot \psi(v) = 0$.  Hence $\psi(v )$ is in $H_{i,j}$, which proves the claim.  It follows from the claim that $\psi$ takes $\mathcal K_{2n}$ to $\mathcal{H}_{2n-1}$ as desired.
\end{proof}

Hence, to study $\mathcal{L}(\mathcal{H}_{2n-1})$, it suffices to study $\Pi_{\Gamma_{2n}}$. 

\begin{proposition}
For all finite subsets $V $ of $\Z_{> 0}$, the bond lattice $\Pi_{\Gamma_{V}}$ is  the induced subposet of $\Pi_{V}$ consisting of those partitions $\pi$ for which each nonsingleton block of $\pi$ has an odd minimum and an even maximum.
\end{proposition}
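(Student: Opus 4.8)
The plan is to reduce the statement to a purely block-wise characterization and then prove that characterization in two directions. Both posets in question are induced subposets of $\Pi_V$, and both are defined by a condition imposed separately on each block of a partition, so it suffices to show that a single block $B \subseteq V$ induces a connected subgraph $\Gamma_V|_B$ if and only if $B$ is a singleton or else $\min B$ is odd and $\max B$ is even. Singleton blocks are trivially connected and vacuously satisfy the stated condition, so I would fix a block $B$ with $|B| \ge 2$ and prove that $\Gamma_V|_B$ is connected exactly when $\min B$ is odd and $\max B$ is even. Throughout I would use only the defining property of $\Gamma_V$: an edge joins two vertices precisely when one is odd, the other even, and the odd one is the smaller of the two.

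For the forward direction I would argue by contradiction using the extremal elements of $B$. Write $m = \min B$ and $M = \max B$. If $m$ were even, then any neighbor of $m$ in $\Gamma_V$ would have to be an odd vertex strictly smaller than $m$; since $m$ is the minimum of $B$, no such vertex lies in $B$, so $m$ is isolated in $\Gamma_V|_B$, contradicting connectivity (as $|B| \ge 2$). Symmetrically, if $M$ were odd, every neighbor of $M$ would have to be an even vertex exceeding $M$, of which there are none in $B$, so $M$ is isolated. Hence connectivity forces $m$ to be odd and $M$ to be even.

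For the converse I would exhibit an explicit connected spanning subgraph. Suppose $m = \min B$ is odd and $M = \max B$ is even; note that $m < M$ since they have opposite parities. Every odd vertex $u \in B$ satisfies $u \le M$, and since $u$ and $M$ have opposite parities we get $u < M$, so $\{u, M\}$ is an edge; thus every odd vertex of $B$ is adjacent to $M$. Dually, every even vertex $v \in B$ satisfies $v \ge m$, hence $v > m$, so $\{m, v\}$ is an edge and every even vertex of $B$ is adjacent to $m$. Finally $\{m, M\}$ is itself an edge. Consequently every vertex of $B$ is joined by a path of length at most two to the edge $\{m, M\}$, so $\Gamma_V|_B$ is connected.

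Applying this block-wise characterization to each block of an arbitrary $\pi \in \Pi_V$ then yields the proposition. I do not anticipate a genuine obstacle: the only point requiring a little care is the backward direction, where one must produce connectivity rather than merely rule it out, but the ``double star'' centered on the extremal edge $\{m, M\}$ accomplishes this cleanly, with the parity constraints conveniently supplying the strict inequalities $u < M$ and $m < v$ needed to guarantee the relevant edges.
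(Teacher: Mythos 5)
Your proof is correct and follows essentially the same route as the paper: both directions reduce to a block-wise check, the forward direction via the extremal vertices needing a neighbor of the right parity, and the converse via the same ``double star'' on the edge $\{\min B, \max B\}$ with every odd vertex adjacent to $\max B$ and every even vertex adjacent to $\min B$. No gaps.
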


\begin{proof}
Suppose that  $\pi  \in \Pi_{\Gamma_{V}}$. Let $B$ be a nonsingleton block of $\pi$ and let $u= \min B$. 
Since
 $\Gamma_{V}|_{B}$ is connected, there must be an element $v \in B$  such that $\{u,v\} $ is an edge of $\Gamma_{V}$.  By definition of $\Gamma_{V}$, since $u <v$, the node $u$ must be odd.  A similar argument tells us that $\max B$ is even.

Conversely, suppose that we have a partition $\pi = B_1|\cdots|B_k \in \Pi_{V}$ such that each nonsingleton $B_i$ has an odd minimum and an even maximum. We wish to show that $\Gamma_{V}|_{B_i}$ is connected. Indeed, let $M_i = \max B_i$ and $m_i = \min B_i$. By the definition of $\Gamma_{V}$, each odd vertex in $B_i$ shares an edge with $M_i$ and each even vertex of $B_i$ shares an edge with $m_i$. Since, in particular, this means that $m_i$ and $M_i$ share an edge, $\Gamma_{V}|_{B_i}$ is connected. Hence $\pi  \in \Pi_{\Gamma_{V}}$. \end{proof}

We  will use the Rota-Whitney formula (\ref{RWeq})  to evaluate the M\"obius funtion on each lower interval $[\hat 0 ,\pi]$ of $\Pi_{\Gamma_{V}}$.   Let $T$ be a tree whose nodes are in $ \Z_{> 0}$.  We say that $T$ is {\em increasing-decreasing (ID)} if, when $T$ is rooted at its largest  node, each internal node $v$ satisfies
\begin{enumerate}
\item if $v$ is odd then $v$ is less than all its descendants and all its children are even,
\item if $v$ is even then $v$ is greater than all its descendants and all its children are odd. \end{enumerate}

An an \emph{increasing-decreasing (or ID) forest}  is a forest, each of whose components are increasing-decreasing trees.
An example of an ID forest is given in the figure below.

\ \begin{center} \includegraphics[height=1.3in]{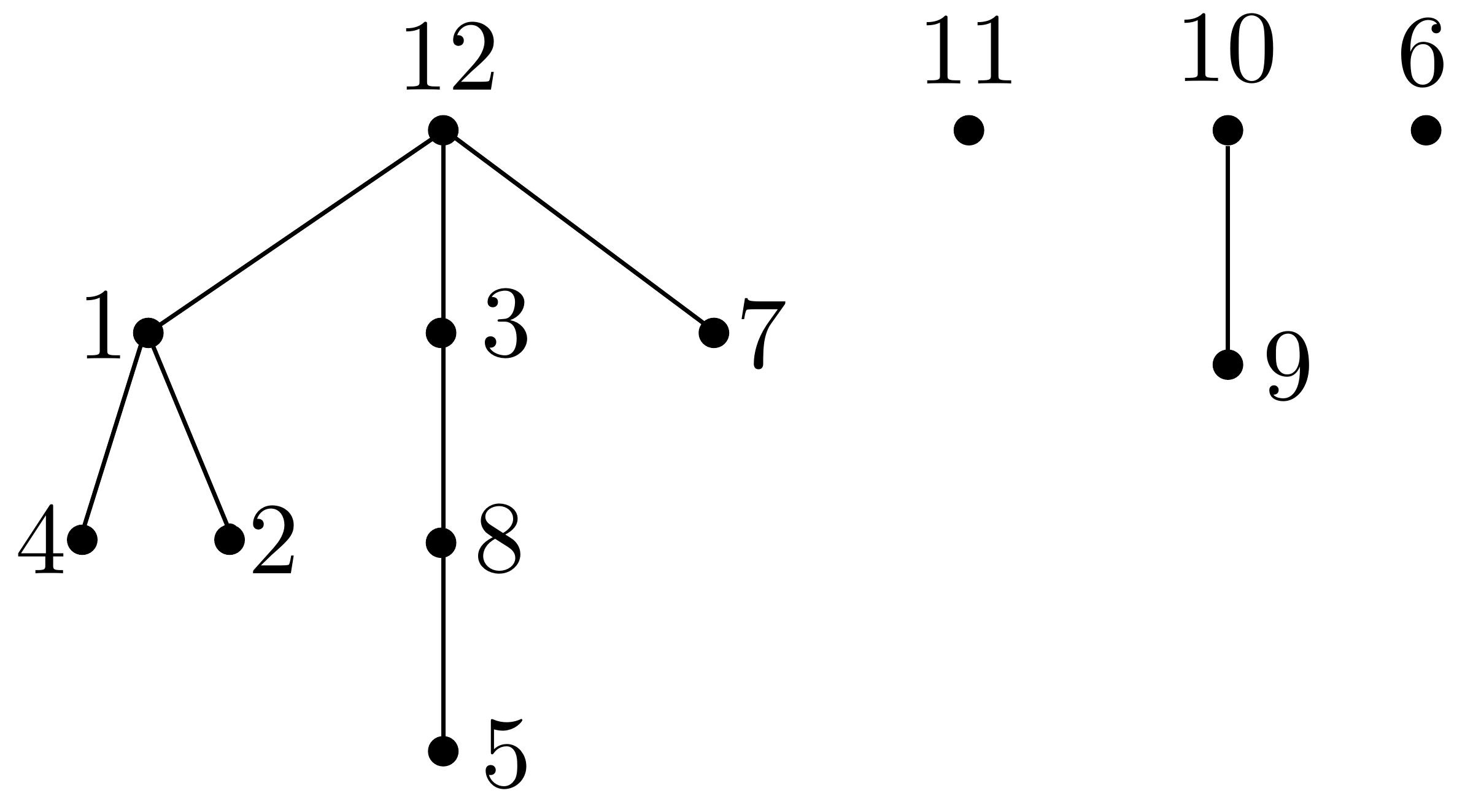} \end{center}

\begin{proposition} \label{larsmlprop} Let $T$ be a tree whose nodes are in $\Z_{> 0}$.  Then $T$ is an ID tree if and only if its internal nodes satisfy conditions (1) and (2) of the definition of ID  tree when $T$ is rooted at its smallest node.
\end{proposition}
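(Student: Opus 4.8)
The plan is to reformulate conditions (1) and (2) so that the roles of the two rootings become transparent, and then to show that the two sets of constraints coincide. Write $M$ for the largest node and $m$ for the smallest. First I would split (1)--(2) into a \emph{parity part} (the requirement on the parities of the children) and an \emph{inequality part} (that an odd internal node lies below, and an even internal node above, all of its descendants). The parity part is essentially rooting-free: in any rooting the children of $v$ are precisely its neighbors other than its parent, so the parity part, imposed in a single rooting, is equivalent to the statement that every edge of $T$ joins an odd vertex to an even one. Hence once this edge condition is deduced from one rooting it holds outright, and it automatically yields the parity part in the other rooting; so only the inequality part must be transferred. For the inequality part I would use the observation that, rooting at $r$, the proper descendants of a node $v$ are exactly the vertices lying outside the component of $T-v$ that contains $r$ (and all other vertices when $v=r$). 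Thus condition (1)/(2) at $v$ says: $v$ is smaller (if odd) or larger (if even) than every vertex outside the ``$r$-ward'' branch at $v$.

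The key step is a structural lemma: if $T$ satisfies (1)--(2) rooted at $M$, then $m$ and $M$ are adjacent. Indeed $M$ must be even (an odd largest node cannot lie below its descendants), and its children are then odd; the child $c$ of $M$ on the path toward $m$ is odd, and if $c\neq m$ then $c$ is an internal odd node having $m$ among its descendants, forcing $c<m$ and contradicting the minimality of $m$. Hence $c=m$, so $m$ is a child of $M$ (in particular $m$ is odd). Because $m$ and $M$ are adjacent, the path between them is a single edge, so no vertex $v\notin\{m,M\}$ lies on it; consequently $m$ and $M$ lie in the \emph{same} component of $T-v$, i.e. the $M$-ward branch and the $m$-ward branch at $v$ coincide. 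By the reformulation above, the inequality condition at each such $v$ is then literally the same statement whether we root at $M$ or at $m$. The two exceptional nodes are immediate: rooted at $m$, the node $m$ must lie below all other vertices and $M$ must lie above them, which holds since they are the global extremes. Combining this with the parity part (free from the edge condition) shows that $T$ satisfies (1)--(2) rooted at $m$.

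The converse is the mirror image: starting from ``rooted at $m$'', the same argument with the roles of minimum/maximum and odd/even exchanged shows that $m$ is odd, $M$ is even, and $M$ is a child of $m$, after which the identical branch comparison returns ``rooted at $M$''. The main obstacle is pinning down the adjacency lemma and the branch reformulation correctly; once these are in place the equivalence is forced, and the bookkeeping for the parity conditions and for the two extreme vertices is routine. I would also check the degenerate cases $|V|\le 2$ separately, where both statements hold trivially.
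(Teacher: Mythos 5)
Your proof is correct. Its crux --- that the smallest node $m$ is a child of the largest node $M$ when $T$ is rooted at $M$ --- is exactly the paper's key step, established the same way (the root must be even, its children odd, and the child on the path toward $m$ would be an internal odd node smaller than $m$ unless it equals $m$). Where you differ is in how the conditions are transferred after that. The paper re-roots at $m$ and observes that the subtrees hanging off $m$ in the new rooting (the old subtrees $S_1,\dots,S_j$ of $m$ together with $T$ minus the subtree containing $m$) are each ID trees rooted at their largest, even nodes, so the conditions propagate by the recursive subtree structure. You instead (i) peel off the parity requirement as a rooting-free statement (every edge joins an odd vertex to an even one), and (ii) note that the proper-descendant set of any $v\notin\{m,M\}$ is determined by which component of $T-v$ contains the root, so adjacency of $m$ and $M$ makes these sets literally identical in the two rootings; only $m$ and $M$ themselves need separate treatment, which is trivial since they are the global extremes. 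Your version makes it more transparent that re-rooting across a single edge disturbs only the two endpoints of that edge, while the paper's version sets up the recursive decomposition into subtrees that it reuses later (e.g.\ in the postorder-word bijection). One cosmetic slip: for $|V|=2$ the two conditions are not vacuously true (each asserts that the smaller node is odd and the larger even), but they are still equivalent, and your general argument already covers this case, so no separate check is actually needed.
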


\begin{proof} Assume $|T| > 1$. Let $T$ be an ID tree with odd smallest node $s$ and even largest node $t$.  Root $T$ at $t$.  Let $T_1,\dots, T_k$ be the subtrees rooted at  the children of $t$.  Clearly, $s$ must be the root of one of these subtrees say $T_1$.   Let $S_1,\dots,S_j$ be the subtrees rooted at the children of $s$. Let $T^\prime$ be the tree obtained by removing $T_1$ from $T$. 
  When we root $T$ at $s$ we get the rooted tree in which the subtrees rooted at the children of $s$ are  $S_1,\dots, S_j, T^\prime$.  
Since $T$ is an ID tree,  these subtrees  $S_1,\dots, S_j, T^\prime$ must be ID trees rooted at their largest nodes, which are even.  Hence the internal nodes of $T$ rooted at $s$  satisfy (1) and (2) of the definition of ID tree.

The proof of the converse is analogous with the roles of even and odd exchanged, and the roles of larger and smaller exchanged.
\end{proof}

The following result is implicit in  the proof of Theorem 24 of \cite{Selig_Smith_Steingrimsson}.  We include a proof for the sake of completeness.
\begin{theorem}\label{doubth} For all $\pi \in \Pi_{\Gamma_{V}}$, we have that $(-1)^{|\pi|} \mu(\hat 0,\pi)$ equals the number of ID forests on $V$ whose trees have nodes sets equal to the blocks of $\pi$. Consequently,
$  -\mu_ {\Pi_{\Gamma_{V}}}(\hat 0, \hat 1) $ is equal to the number of ID trees on $V$ and 
 $$ \chi_{\Pi_{\Gamma_{V}}}(t) = \sum_{k=1}^{2n} (-1)^{k} |\mathcal F_{V,k}| t^{k-1},
$$
 where $\mathcal F_{V,k}$ is the set of ID forests on $V$ with exactly $k$ trees.
\end{theorem}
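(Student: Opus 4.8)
The plan is to compute each $\mu(\hat 0,\pi)$ via the Rota--Whitney formula (\ref{RWeq}), which turns the problem into counting the NBC forests $(V,S)$ of $\Gamma_V$ with $\pi_S=\pi$, once a total order on the edges has been fixed. The entire argument then rests on choosing this order so that the NBC forests are \emph{literally} the ID forests; since two forests with the same edge set induce the same partition of $V$, such a coincidence makes the constraint $\pi_S=\pi$ automatic and delivers the first assertion at once. The order I would use is: writing each edge of $\Gamma_V$ as $\{u,v\}$ with $u$ odd and $v$ even, declare $\{u,v\}\prec\{u',v'\}$ iff $u<u'$, or $u=u'$ and $v>v'$ (odd endpoint ascending, even endpoint descending). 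A quick check on the $4$-cycles of $\Gamma_V$ --- whose vertex sets consist of two odds and two evens with every odd below every even --- already indicates this is the right choice: the unique broken circuit of such a $4$-cycle is exactly the $3$-edge path that fails to be an ID tree, while the other three sub-paths are ID trees.

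The heart of the matter, and the step I expect to be the main obstacle, is the equivalence
$$
(V,S)\ \text{is NBC}\iff\text{every tree of }(V,S)\ \text{is an ID tree}.
$$
Because a broken circuit lives inside a single tree of the forest, both properties are inherited component-by-component, so it suffices to prove that one tree $T$ of $\Gamma_V$ is NBC if and only if it is ID; this lets me root $T$ unambiguously at its smallest vertex and invoke Proposition~\ref{larsmlprop}. I would run the argument through the minimum-edge description: $T$ contains a broken circuit iff some non-edge $e$ closes a cycle $C\subseteq E(T)\cup\{e\}$ on which $e$ is $\prec$-minimal. Two clean facts about the shifted graph $\Gamma_V$ drive everything: the minimum \emph{vertex} of any cycle $C$ is odd (each even vertex of $C$ exceeds its two odd neighbours), and the $\prec$-minimum \emph{edge} of $C$ is $\{a,w\}$, where $a=\min C$ and $w$ is the larger of the two even neighbours of $a$ in $C$. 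Tracing such a cycle after rooting at the smallest vertex then shows that a broken circuit forces an even internal node to sit above a strictly larger even descendant, violating condition~(2); conversely, any violation of (1) or (2) produces, using the adjacency of $\Gamma_V$ (every odd vertex is joined to every larger even vertex), a non-tree edge that is $\prec$-minimal on the cycle it closes, i.e.\ a broken circuit inside $T$. The delicate point is to carry out this translation uniformly over all cycle lengths --- in particular, to select from a given ID-violation the correct odd ancestor and even descendant so that the closing edge is genuinely the minimum of its cycle, rather than only handling the $4$-cycle case.

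With the equivalence established, the remaining bookkeeping is routine. In the geometric lattice $\Pi_{\Gamma_V}$ one has $\mathrm{rk}(\pi)=|V|-|\pi|$, and since $|V|=2n$ is even, $(-1)^{\mathrm{rk}(\pi)}=(-1)^{|\pi|}$; combining this with (\ref{RWeq}) and the equivalence yields $(-1)^{|\pi|}\mu(\hat 0,\pi)$ equal to the number of ID forests on $V$ whose trees are supported on the blocks of $\pi$, which is the first claim. Taking $\pi=\hat 1$ (one block, $|\pi|=1$) gives $-\mu_{\Pi_{\Gamma_V}}(\hat 0,\hat 1)=\#\{\text{ID trees on }V\}$. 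Finally, $\Gamma_{2n}$ is connected, so its bond lattice has length $\ell=2n-1$ and $\ell-\mathrm{rk}(\pi)=|\pi|-1$; summing the first claim over all $\pi$ with exactly $k$ blocks collects the ID forests with $k$ trees into $|\mathcal F_{V,k}|$ and produces
$$
\chi_{\Pi_{\Gamma_V}}(t)=\sum_{\pi}\mu(\hat 0,\pi)\,t^{\ell-\mathrm{rk}(\pi)}=\sum_{k=1}^{2n}(-1)^{k}\,|\mathcal F_{V,k}|\,t^{k-1},
$$
as desired.
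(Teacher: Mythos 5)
Your overall strategy is exactly the paper's: apply the Rota--Whitney formula (\ref{RWeq}) after choosing an edge order for which the NBC forests of $\Gamma_V$ are precisely the ID forests, then do the (routine) rank bookkeeping. Your order --- odd endpoint ascending, even endpoint descending, lexicographically --- is a specific linear extension of the product order the paper uses, and your two structural observations are correct and genuinely useful: the minimum vertex of any cycle of $\Gamma_V$ is odd, and the $\prec$-minimum edge of a cycle $C$ is $\{a,w\}$ with $a=\min C$ and $w$ the larger even neighbour of $a$ in $C$. The concluding computation ($\mathrm{rk}(\pi)=|V|-|\pi|$, parity of $|V|$, summing over $\pi$ with $k$ blocks) is also fine.

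The gap is the one you flag yourself: the equivalence ``$T$ is NBC $\iff$ $T$ is ID'' is the entire content of the theorem, and you assert it with a one-sentence gesture (``tracing such a cycle \dots shows \dots'') rather than an argument. To close it, note that the two directions need different devices, and your worry about ``all cycle lengths'' only bites in one of them. For ID $\Rightarrow$ NBC you must handle an arbitrary non-tree edge $e=\{a,w\}$; the working tool is the youngest common ancestor $h$ of $a$ and $w$ in $T$ rooted at its largest node. A short case analysis (using the ID inequalities) shows either $h$ is odd, forcing $h=a$ and then the tree edge from $a$ to its child on the path to $w$ to be $\prec$-smaller than $e$, or $h$ is even, forcing $a$ to be a child of $h$ with $h>w$ and again $\{a,h\}\prec e$; either way $e$ is not the minimum of its cycle. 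For NBC $\Rightarrow$ ID you never need long cycles at all: any ID violation can be localized to a node $x$ of minimal depth having a grandchild $y$ on the wrong side of $x$, one checks $x$ is not the root, and then with $u$ the parent of $x$ and $v$ the parent of $y$ the chord $\{u,y\}$ is an edge of $\Gamma_V$ that is $\prec$-minimal on the $4$-cycle through $u,x,v,y$, so the path $u,x,v,y$ is a broken circuit already contained in $T$. So your $4$-cycle computation is not merely a sanity check --- it is the whole converse --- but the forward direction requires the common-ancestor argument, which is absent from your write-up.
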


\begin{proof}
We prove this result by showing that under an appropriate ordering of the edges of $\Gamma_{V}$, the NBC forests of $\Gamma_{V}$ are  precisely the ID forests on $V$.
The total order on the edges of $\Gamma_{V}$ we use is as follows: First order the odd vertices in {\em increasing} order and the even vertices in \emph{decreasing} order, and then fix any linear extension $\prec$ of the product order of the two linear orders.   By associating the edge $\{2i-1,2j\} $ with the  ordered pair $(2i-1,2j)$, we get a total  order on the edges of $\Gamma_{V}$.
For instance, we have $\{1,8\} \prec \{1,6\} \prec \{3,2\}$.

  Since adding an edge between two different components of a forest can never complete a cycle, a forest is an NBC forest of $\Gamma_{V}$ if and only if  its trees are NBCs trees of  the subgraph  of $\Gamma_{V}$ induced by the  node set   of the tree.  
By definition, a forest is an ID forest if and only if  its trees are ID trees.  Hence, it suffices to prove that   the NBC trees of $\Gamma_{V}$  are precisely the ID trees on $V$.  We can assume $|V| > 1$, $\min V$ is odd and $\max V$ is even.

Let  $T$ be an ID tree on $V$. Suppose that $e = \{2i-1,2j\}$ is an edge of $\Gamma_{V}$ that is not in $T$. If we add $e$ to $T$, we create a unique cycle $\rho$, which  contains $e$.   To show that $T$ is NBC, it suffices to find an edge of $\rho$  that strictly precedes $e$ in the order $\prec$.  Root $T$ at its largest node and let $h$ be the youngest common ancestor of $2i-1$ and $2j$ (that is, the furthest from the root).  Clearly $h$ is a node of $\rho$. 

Suppose $h$ is odd. Then $h \leq 2i-1$ since $T$ is an ID tree.  If $h$ is the parent of $2j$ then  $h$ cannot be $2i-1$ since $\{2i-1,2j\}$ is not an edge of $T$. Therefore $h < 2i-1$, which implies that  $\{h,2j\} \prec \{2i-1,2j\} = e$.  Thus $\{h,2j\}$ is the edge of $\rho$ that we seek.
If $h$ is not the parent of $2j$ then let $2k$ be the child of $h$ that is a proper ancestor of $2j$.  We thus have $2k > 2j$, which implies $\{h, 2k\} \prec \{2i-1, 2j\} = e$.  Thus in this case, $\{h, 2k\}$ is the edge of $\rho$ that we seek.

The case that $h$ is even is handled analogously with the role of even and odd exchanged and the role of $<$ and $>$ exchanged.
We can now conclude that $T$ is an NBC tree of $\Gamma_V$.

Conversely, let $T$ be an NBC tree of $\Gamma_V$.  Suppose that $T$ is not an $ID$ tree.  Then $T$  rooted  at its largest node  has a node $x$ with a grandchild $y$ satisfying  $$x>y \mbox{  if $x$ is odd and } x <y \mbox{ if $x$ is even}.$$  Choose $x$ so that its distance in $T$ from the root is minimal. We claim that $x$ cannot be the root of $T$.  Indeed, if $x$ is even then it is not   the largest node of $T$.  Thus $x$ is not the root.   If $x$ is odd then it is not the root since the largest node of $T$ must be even.  Let $u$ be the parent of $x$ and let $v$ be the parent of $y$, which implies that $x$ is that parent of $v$.   Thus,  $u,x,v,y$ forms a path in $T$.    

We claim that $\{u,y\}$ is an edge of $\Gamma_V$.  Indeed, if $u$ is even then $x$ is odd.  We thus have  $u > x >y$.  Also  $y$ is odd since it is a grandchild of $x$.  It follows that $\{u,y\}$ is an edge of $\Gamma_V$.  The case  when $u$ is odd is handled analogously.  

Since $\{u,y\}$ is an edge of $\Gamma_V$ we can use it to complete the cycle induced by the node set $\{u,x,v,y\}$.  Clearly $\{u,y\} \prec \{u,x\}$. By the minimality of the choice of $x$, since $u$ is closer to the root than $x$ is, we have $u < v$ if $u$ is odd and $u >v$ if $u$ is even.
Hence $\{u,y\} \prec \{u,x\}, \{v,x\}, \{v,y\}$.  This implies that the subgraph of $T$ induced by $\{u,x,v,y\}$ is a broken circuit which contradicts the hypothesis that $T$ is an NBC tree.  Therefore $T$ is indeed an ID tree. \end{proof}

By Theorem~\ref{bondth}, we have the following consequence of Theorem~\ref{doubth}.
\begin{corollary}\label{doubcor} For all $n \ge 1, $   
$$ \chi_{\mathcal L(H_{2n-1})}(t) = \sum_{k=1}^{2n} (-1)^{k} |\mathcal F_{2n,k}| t^{k-1},
$$
 where $\mathcal F_{2n,k}$ is the set of ID forests on $[2n]$ with exactly $k$ trees.
\end{corollary}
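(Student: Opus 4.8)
The plan is to recognize that this corollary is an immediate consequence of the two preceding theorems, chained through the observation that the characteristic polynomial of a ranked poset with a minimum element is an isomorphism invariant. Concretely, I would first note that if $\phi\colon P \to Q$ is a poset isomorphism between two finite ranked posets with $\hat 0$, then $\phi$ carries $\hat 0_P$ to $\hat 0_Q$, preserves the rank function and the length $\ell$, and satisfies $\mu_P(\hat 0, x) = \mu_Q(\hat 0, \phi(x))$ for all $x$, since the M\"obius function is defined purely in terms of the order relation. Comparing term by term in the defining sum $\chi_P(t) = \sum_{x} \mu_P(\hat 0, x)\, t^{\ell - \mathrm{rk}(x)}$ then gives $\chi_P(t) = \chi_Q(t)$. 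Applying this to the isomorphism $\mathcal{L}(\mathcal{H}_{2n-1}) \cong \Pi_{\Gamma_{2n}}$ supplied by Theorem~\ref{bondth} yields the identity
$$\chi_{\mathcal{L}(\mathcal{H}_{2n-1})}(t) = \chi_{\Pi_{\Gamma_{2n}}}(t).$$

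The second and final step is to specialize Theorem~\ref{doubth} to $V = [2n]$. Since $\Gamma_{2n}$ is by definition the graph $\Gamma_V$ with $V = [2n]$, and since $|[2n]| = 2n$, the formula of Theorem~\ref{doubth} reads
$$\chi_{\Pi_{\Gamma_{2n}}}(t) = \sum_{k=1}^{2n} (-1)^{k}\, |\mathcal{F}_{[2n],k}|\, t^{k-1},$$
where $\mathcal{F}_{[2n],k}$ is the set of ID forests on $[2n]$ with exactly $k$ trees, i.e.\ precisely the set denoted $\mathcal{F}_{2n,k}$ in the statement of the corollary. Substituting this into the displayed identity from the first step gives the claimed expression for $\chi_{\mathcal{L}(\mathcal{H}_{2n-1})}(t)$.

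I do not expect any genuine obstacle here: the entire mathematical content has already been established in Theorems~\ref{bondth} and~\ref{doubth}, and the corollary is a pure bookkeeping deduction. The only point worth stating explicitly (and the sole thing one must justify beyond quoting the two theorems) is the isomorphism invariance of the characteristic polynomial, which is standard. For a fully self-contained writeup I would simply record that invariance in one sentence and then write the two-line chain of equalities above, taking care to identify the notation $\mathcal{F}_{2n,k}$ with $\mathcal{F}_{V,k}$ at $V=[2n]$.
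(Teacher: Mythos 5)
Your proposal is correct and matches the paper exactly: the corollary is stated there as an immediate consequence of Theorem~\ref{bondth} (the isomorphism $\mathcal{L}(\mathcal{H}_{2n-1}) \cong \Pi_{\Gamma_{2n}}$) combined with Theorem~\ref{doubth} specialized to $V=[2n]$, with no further argument given. Your explicit remark on the isomorphism invariance of the characteristic polynomial is the only detail the paper leaves tacit, and it is handled correctly.
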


Using Hetyei's formula (\ref{heteq2}) and Corollary~\ref{doubcor},   we obtain the following combinatorial interpretation  of the median Genocchi numbers. In Section~\ref{DtoSsec}, we give an alternative proof that does not rely on these results.
\begin{corollary}For all $n\ge 1$, the  number of ID forests on $[2n]$ is equal to $h_n$.
\end{corollary}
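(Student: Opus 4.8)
The plan is to prove this statement as a direct consequence of the combinatorial formula for the characteristic polynomial in Corollary~\ref{doubcor} together with Hetyei's formula (\ref{heteq2}). The key insight is that evaluating the characteristic polynomial at $t = -1$ collapses the signed sum into a simple unsigned total count of ID forests.

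First I would recall from Corollary~\ref{doubcor} that
$$\chi_{\mathcal L(H_{2n-1})}(t) = \sum_{k=1}^{2n} (-1)^{k} |\mathcal F_{2n,k}| \, t^{k-1}.$$
Substituting $t = -1$, the $k$-th term becomes $(-1)^{k} |\mathcal F_{2n,k}| \cdot (-1)^{k-1} = (-1)^{2k-1} |\mathcal F_{2n,k}| = -|\mathcal F_{2n,k}|$. Hence every term contributes with the same sign, and
$$\chi_{\mathcal L(H_{2n-1})}(-1) = -\sum_{k=1}^{2n} |\mathcal F_{2n,k}|.$$
Since $\mathcal F_{2n,k}$ is the set of ID forests on $[2n]$ with exactly $k$ trees, and every ID forest on $[2n]$ has between $1$ and $2n$ trees, the sum $\sum_{k=1}^{2n} |\mathcal F_{2n,k}|$ is precisely the total number of ID forests on $[2n]$.

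To finish, I would invoke Hetyei's formula (\ref{heteq2}), which asserts $-\chi_{\mathcal L(\mathcal H_{2n-1})}(-1) = h_n$. Combining this with the computation above gives
$$h_n = -\chi_{\mathcal L(\mathcal H_{2n-1})}(-1) = \sum_{k=1}^{2n} |\mathcal F_{2n,k}|,$$
which is exactly the number of ID forests on $[2n]$.

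I do not anticipate any genuine obstacle here: the proof is a one-line evaluation of an already-established polynomial identity at $t=-1$, with the only subtlety being the sign bookkeeping that shows the $(-1)^k$ from the coefficient and the $(-1)^{k-1}$ from the power combine to a uniform $-1$. The substance of the result has already been absorbed into Corollary~\ref{doubcor} (via Theorem~\ref{doubth} and the NBC/ID-forest correspondence) and into Hetyei's theorem; this corollary merely reads off the specialization. I would note, as the authors do, that Section~\ref{DtoSsec} will later supply a self-contained proof independent of Hetyei's result.
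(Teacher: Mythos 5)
Your proposal is correct and is exactly the paper's argument: the authors derive this corollary by combining Hetyei's formula (\ref{heteq2}) with Corollary~\ref{doubcor}, which amounts to the same evaluation at $t=-1$ and the same sign cancellation you carry out. Your closing remark about the independent proof in Section~\ref{DtoSsec} also matches the paper's own comment.
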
 

In the next section we prove the following analogous result.
\begin{theorem} \label{genIDth} For all $n\ge 1$, the  number of ID trees on $[2n]$ is equal to $g_n$.
\end{theorem}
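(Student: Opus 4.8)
\textbf{The plan} is to exhibit an explicit bijection between the set of ID trees on $[2n]$ and the set of permutations appearing in Dumont's characterization \eqref{otherDumontEq}, which by that formula has cardinality $g_n$. Given an ID tree $T$ on $[2n]$, I would root it at its largest node $2n$ (which is even) and turn it into a plane tree by ordering the children of each node according to the following alternating rule: the children of an even node (which are odd) are listed left to right in \emph{increasing} order, while the children of an odd node (which are even) are listed in \emph{decreasing} order. Let $w=\pw(T)$ be the resulting postorder word; since the root $2n$ is the maximum, it is the last letter of $w$. Define $\Phi(T)$ to be the word obtained by deleting this final $2n$ from $w$; this is a permutation of $[2n-1]$ written in one-line notation.

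First I would check that $\Phi(T)$ lies in the set of \eqref{otherDumontEq}, i.e. that in $w=w_1\cdots w_{2n}$ one has a descent $w_i>w_{i+1}$ exactly when $w_i$ is even, for all $i\le 2n-2$. The idea is to classify each consecutive pair $(w_i,w_{i+1})$ of the postorder word. Since every letter $w_i$ is the root of the subtree it completes in postorder, $w_{i+1}$ is either (i) the parent of $w_i$, when $w_i$ is the last child of its parent, or (ii) the deepest-leftmost leaf of the subtree rooted at the next sibling of $w_i$. In case (i) the comparison is governed by conditions (1)--(2) in the definition of an ID tree: an odd parent lies below all its descendants and an even parent above them, so the pair is a descent precisely when the parent is odd, equivalently when the child $w_i$ is even. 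In case (ii) the letters $w_i$ and its sibling have the same parity; using that an odd node is the \emph{minimum} and an even node the \emph{maximum} of its own subtree, together with the alternating sibling order chosen above, one verifies that the pair is a descent precisely when $w_i$ is even. The alternating child-order is dictated exactly by what case (ii) requires. Hence $\Phi(T)$ satisfies the descent condition of \eqref{otherDumontEq}.

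To finish I would show $\Phi$ is a bijection by describing its inverse as a deterministic parse of the word $w=\Phi(T)\cdot(2n)$. The root is $2n$, and its children's subtrees partition $w_1\cdots w_{2n-1}$ into consecutive blocks, each block being the postorder word of one subtree whose last letter equals its minimum (an odd child is the minimum of its subtree). Reading left to right, the first block ends at the position of the value $1$, the next ends at the position of the minimum of the remaining letters, and so on; this recovers the children of the root in increasing order. One then recurses inside each block, now splitting at successive maxima and in decreasing order, the roles of odd/even and min/max being interchanged at each level. Injectivity is immediate from this determinism, giving $\#\{\text{ID trees on }[2n]\}\le g_n$. \textbf{The main obstacle} is the reverse direction: proving that this parse, applied to an \emph{arbitrary} permutation satisfying the descent condition of \eqref{otherDumontEq}, always succeeds and returns a genuine ID tree, i.e. that the descent condition forces, at every level of the recursion, the correct alternation of parities and the required minimum/maximum structure. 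I expect this surjectivity verification, carried out by induction on the recursion depth, to be the technical heart of the argument. A route that avoids building the inverse by hand is to deduce Theorem~\ref{genIDth} as the single-tree specialization of the general bijection between ID forests on $[2n]$ and D-permutations of the next section: restricting \eqref{intorchardumeq} to forests with one tree identifies ID trees with D-cycles on $[2n]$, which \eqref{otherDumontEq} then counts as $g_n$, and combined with Theorem~\ref{doubth} and Theorem~\ref{bondth} this also realizes the number of ID trees as $-\mu_{\mathcal L(\mathcal H_{2n-1})}(\hat 0,\hat 1)=g_n$.
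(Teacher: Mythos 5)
Your map $\Phi$ is exactly the paper's proof: the tree $\hat T$ with the alternating child order, its postorder word, and the parse by right-to-left minima/maxima are precisely the constructions of Section 4 (the set $\mathcal W$, Lemma~\ref{wordlemma}, and Theorem~\ref{IncDecBijection}), composed with the trivial bijection of Theorem~\ref{genth}(1) that drops the final letter $2n$; the paper just factors your $\Phi$ through D-cycles rather than landing directly in the set of \eqref{otherDumontEq}. The surjectivity step you flag as the technical heart is exactly what Lemma~\ref{wordlemma} supplies, by showing $\gamma$ is well defined on all of $\mathcal W$ (which is the descent condition of \eqref{otherDumontEq} plus the boundary condition) and inverse to $\pw$.
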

From this we see that the nonmedian Genocchi numbers also  play an important role in the study of the intersection lattice of the homogenized Linial arrangement.
\begin{corollary} \label{gencor}
For all $n \ge 1$,

\begin{equation} \label{geneq} \mu_{\mathcal L(\mathcal H_{2n-1})}(\hat 0, \hat 1) = - g_n.\end{equation}

\end{corollary}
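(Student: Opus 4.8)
The plan is to chain together the three preceding results, since the isomorphism of Theorem~\ref{bondth} transports the M\"obius computation on $\mathcal{L}(\mathcal{H}_{2n-1})$ to the combinatorially transparent bond lattice $\Pi_{\Gamma_{2n}}$. First I would observe that any poset isomorphism carries the minimum element to the minimum, the maximum to the maximum, and preserves all values of the M\"obius function; hence Theorem~\ref{bondth} gives
$$\mu_{\mathcal{L}(\mathcal{H}_{2n-1})}(\hat 0, \hat 1) = \mu_{\Pi_{\Gamma_{2n}}}(\hat 0, \hat 1).$$
This reduces the claim to evaluating the top M\"obius number of the bond lattice $\Pi_{\Gamma_{2n}}$.

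Next I would invoke the second assertion of Theorem~\ref{doubth}, taking $V = [2n]$, which states precisely that $-\mu_{\Pi_{\Gamma_{2n}}}(\hat 0, \hat 1)$ equals the number of ID trees on $[2n]$. Equivalently, one could read this off from Corollary~\ref{doubcor}: the constant term of the characteristic polynomial is $\chi_{\mathcal{L}(\mathcal{H}_{2n-1})}(0) = (-1)^{1}\,|\mathcal F_{2n,1}|$, where $\mathcal F_{2n,1}$ is the set of ID forests on $[2n]$ consisting of a single tree, i.e.\ the set of ID trees; and since $\hat 1$ is the unique element of top rank, the defining formula for the characteristic polynomial yields $\chi_{\mathcal{L}(\mathcal{H}_{2n-1})}(0) = \mu_{\mathcal{L}(\mathcal{H}_{2n-1})}(\hat 0, \hat 1)$. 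Finally, Theorem~\ref{genIDth} identifies the number of ID trees on $[2n]$ with the Genocchi number $g_n$. Combining the three steps gives $-\mu_{\mathcal{L}(\mathcal{H}_{2n-1})}(\hat 0, \hat 1) = g_n$, which is the asserted formula.

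The only genuine content is deferred: the real work lies entirely in Theorem~\ref{genIDth}, whose proof is promised in the following section. Thus the main obstacle is not located in this corollary at all but in establishing the bijection between ID trees on $[2n]$ and a Dumont-type family of cardinality $g_n$. Everything in the corollary itself is a formal consequence once that enumeration is in hand, so no computation beyond the bookkeeping above is needed.
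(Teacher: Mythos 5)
Your proof is correct and follows exactly the paper's (implicit) argument: the paper derives this corollary by chaining Theorem~\ref{bondth}, Theorem~\ref{doubth}, and Theorem~\ref{genIDth} in just the way you describe, with the only substantive content deferred to the proof of Theorem~\ref{genIDth} (via Theorems~\ref{genth}(1) and~\ref{IncDecBijection} in the next section). Your side remark that $\chi_{\mathcal L(\mathcal H_{2n-1})}(0)=\mu_{\mathcal L(\mathcal H_{2n-1})}(\hat 0,\hat 1)$ is also valid, since the arrangement is central and $\hat 1$ is the unique element of top rank.
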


\section{From ID forests to Dumont-like permutations} \label{Dsec}

Our next step is to introduce a class of permutations similar to the Dumont permutations discussed in Section ~\ref{gensec} and then give a bijection between these permutations  and  the ID forests.  This will enable us to express the characteristic polynomial in terms of D-permutations in several ways.

\subsection{D-permutations} 
Let $A$ be a finite subset of $\Z_{> 0}$.  We say $\sigma \in \s_A$ is a {\em D-permutation} on $A$ if 
$i \le \sigma(i) $ whenever $i$ is odd and $i \ge \sigma(i)$ whenever $i$ is even.
We denote by $\mathcal{D}_A$ the set of D-permutations on $A$ and by $\mathcal{DC}_A$ the set of D-cycles on $A$. If $A = [n]$,  we write $\mathcal{D}_n$ and $\mathcal{DC}_{n}$.

\begin{example}
The D-permutations on $[4]$ (in cycle form) are 
\begin{center}$\begin{array}{cccc}
(1)(2)(3)(4) & (1,2) (3)(4) & (1,4) (2) (3) & (3,4) (1) (2)\\
 (1,4,2) (3) & (1,3,4) (2) & (1,2)(3,4) & (1,3,4,2).
\end{array}$\end{center}
The D-cycles  on $[6]$ are:
$$(1,3,5,6,4,2) \qquad (1,4,3,5,6,2) \qquad (1,5,6,3,4,2) .$$

\end{example}

Note that all Dumont permutations are D-permutations, but not conversely. Indeed, a D-permutation can have both even and odd fixed points, while a Dumont permutation can only have odd fixed points.  It follows immediately from the definitions that
$$\mathcal{DC}_{2n} \subseteq \{\mbox{Dumont derange. in } \s_{2n}\} \subseteq  \{\mbox{Dumont 
permut. in } \s_{2n}\} \subseteq \D_{2n}.$$  Recall that the two sets in the middle of this chain are 
enumerated by median Genocchi number $h_{n-1}$ and Genocchi number $g_{n+1}$, respectively.  It turns out that the sets on the ends of the chain are also enumerated by Genocchi and median Genocchi numbers, respectively.   
The count for $\mathcal{DC}_{2n}$ is an easy consequence of   (\ref{otherDumontEq}).  We will see at the end of this section that the count for $\mathcal{D}_{2n}$ can be proved using Hetyei's formula (\ref{heteq2}).  In Section~\ref{DtoSsec} we will prove it without relying on Heyei's result.

\begin{theorem} 
\label{genth}
For all $n \ge 1$,
\begin{enumerate}
\item $|\mathcal{DC}_{2n}| = g_n$
\item $|\mathcal{D}_{2n}| = h_{n}$.
\end{enumerate}
\end{theorem}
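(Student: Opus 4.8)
The plan is to prove the two parts separately: part (1) by a direct bijection with the permutation model (\ref{otherDumontEq}), and part (2) by transporting the enumeration through the ID forests of Section~\ref{bondsec}.

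For part (1), I would set up a bijection between $\mathcal{DC}_{2n}$ and the set of permutations of $[2n-1]$ appearing in (\ref{otherDumontEq}), which has cardinality $g_n$. Given a D-cycle $\sigma \in \mathcal{DC}_{2n}$, read off the cycle starting just after its largest element $2n$, obtaining the word $w = w_1 w_2 \cdots w_{2n-1}$ with $w_j = \sigma^j(2n)$; since $\sigma$ is a single $2n$-cycle, $w$ is a permutation of $[2n-1]$. As $w_{j+1} = \sigma(w_j)$ and $\sigma$ is a D-permutation, we have a descent $w_j > w_{j+1}$ exactly when $w_j$ is even, which is precisely the condition in (\ref{otherDumontEq}). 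For the inverse, send such a word $w$ to the cycle $(2n, w_1, \dots, w_{2n-1})$; the descent condition reproduces the D-permutation inequalities at each $w_j$ with $j \le 2n-2$, and the only remaining point is the wrap-around value $\sigma(w_{2n-1}) = 2n$. Here the key observation is that the largest letter $2n-1$ is forced into the final position of any word satisfying (\ref{otherDumontEq}): were it at some position $i \le 2n-2$ it would create a descent at $i$, contradicting that $2n-1$ is odd. Thus $w_{2n-1} = 2n-1$ is odd and the requirement $\sigma(2n-1) = 2n \ge 2n-1$ holds. These maps are mutually inverse, so $|\mathcal{DC}_{2n}| = g_n$.

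For part (2), the substantive ingredient is the cycle-count-preserving bijection, to be built in this section, between the ID forests on $[2n]$ and the D-permutations on $[2n]$, sending a forest with $k$ trees to a permutation with $k$ cycles. I would construct it tree-by-tree, adapting the increasing-tree-to-cycle map recalled in Section~\ref{bondsubsec}: root each ID tree at its smallest node (permissible by Proposition~\ref{larsmlprop}), fix an appropriate left-to-right ordering of the children, and send the tree to the permutation whose cycle form is its postorder word $\pw(T)$. The crux---and the step I expect to be the main obstacle---is selecting the child ordering so that every such cycle is a D-cycle and so that the passage is reversible, i.e.\ each D-permutation is obtained exactly once. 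Granting this bijection, $|\mathcal{D}_{2n}| = \sum_{k=1}^{2n} |\mathcal{F}_{2n,k}|$. Finally, evaluating Corollary~\ref{doubcor} at $t = -1$ gives $\chi_{\mathcal L(H_{2n-1})}(-1) = -\sum_{k=1}^{2n} |\mathcal{F}_{2n,k}|$, so Hetyei's formula (\ref{heteq2}) yields $\sum_{k=1}^{2n} |\mathcal{F}_{2n,k}| = h_n$, and hence $|\mathcal{D}_{2n}| = h_n$.
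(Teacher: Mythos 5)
Your part (1) is correct and is precisely the paper's own argument: the paper sends a D-cycle $(a_1,\dots,a_{2n})$ with $a_{2n}=2n$ to the one-line word $a_1\cdots a_{2n-1}$ and invokes (\ref{otherDumontEq}); your check of the descent condition (using that a $2n$-cycle has no fixed points, so the D-inequalities are strict) and your observation that $2n-1$ is forced into the last position merely supply details the paper leaves to the reader.

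Part (2) follows the same route as the paper's first proof of that part (Corollary~\ref{mucor} together with Hetyei's formula (\ref{heteq2})), but as written it has a genuine gap: the cycle-count-preserving bijection between ID forests and D-permutations is the main technical content of Section~\ref{Dsec}, and you assert it (``granting this bijection'') rather than construct it. What is missing is, first, the specific child ordering --- children of an even node (all odd) placed in increasing order, children of an odd node (all even) in decreasing order --- and, more substantially, the proof that the map is bijective. The paper does this by characterizing the postorder words of such plane trees as a class $\mathcal W$ of words, identifying the children of the root with the right-to-left minima (resp.\ maxima) of the truncated word (Lemma~\ref{lrmlem}), and building a recursive inverse $\gamma$ (Lemma~\ref{wordlemma}); it also needs the forest-level refinement by cycle support (Corollary~\ref{IncDecBijectionCor}). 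Without this, your identity $|\mathcal{D}_{2n}|=\sum_k|\mathcal{F}_{2n,k}|$ is unsupported, and it is exactly the step you flag as the obstacle. The remaining logic (Corollary~\ref{doubcor} at $t=-1$ plus (\ref{heteq2})) is correct and is what the paper does at the end of Section~\ref{Dsec}; note, though, that the paper additionally gives an independent proof of part (2) in Section~\ref{DtoSsec} via a bijection to surjective staircases (Lemma~\ref{bijectionProp}), precisely so that Hetyei's formula is recovered rather than assumed.
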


\begin{proof}[Proof of (1)]
There is an easy bijection between the set of D-cycles on $[2n]$ and the set $S$ of permutations given in (\ref{otherDumontEq}).  Indeed, we send a D-cycle $(a_1,a_2,\dots,a_{2n})$, where   $a_{2n} = 2n$,  to the permutation $a_1, a_2, \dots, a_{2n-1}$ (in one line notation), which is clearly in  $S$.  Hence the result follows from (\ref{otherDumontEq}).
\end{proof}

\subsection{The bijection} 
Let $\idt$ be the set of ID trees.  For $T \in \idt$, 
 let $\hat T$ be the plane tree obtained  by rooting $T$ at its largest node (which is even when $|T| > 1$) and fixing the following total ordering on the children of each node $v$ of $T$:
\begin{itemize}
\item if $v$ is even, place its children (all of which are odd) in increasing order from left to right
\item if $v$ is odd, place its children (all of which are even) in decreasing order from left to right.
\end{itemize}
Also let $\tilde T$ be the plane tree obtained by rooting $T$ at its smallest node  (which is odd when $|T| > 1$) and  fixing the above total ordering on the children of each node $v$ of $T$.

Recall  from Section~\ref{bondsubsec}, that  $\pw(T)$ denotes the postorder word of a plane tree $T$.    For the ID tree $T$ below, $\pw(\hat T)= 42156378$ and $\pw(\tilde T) = 56378421$.

\begin{center} \includegraphics[scale=0.25]{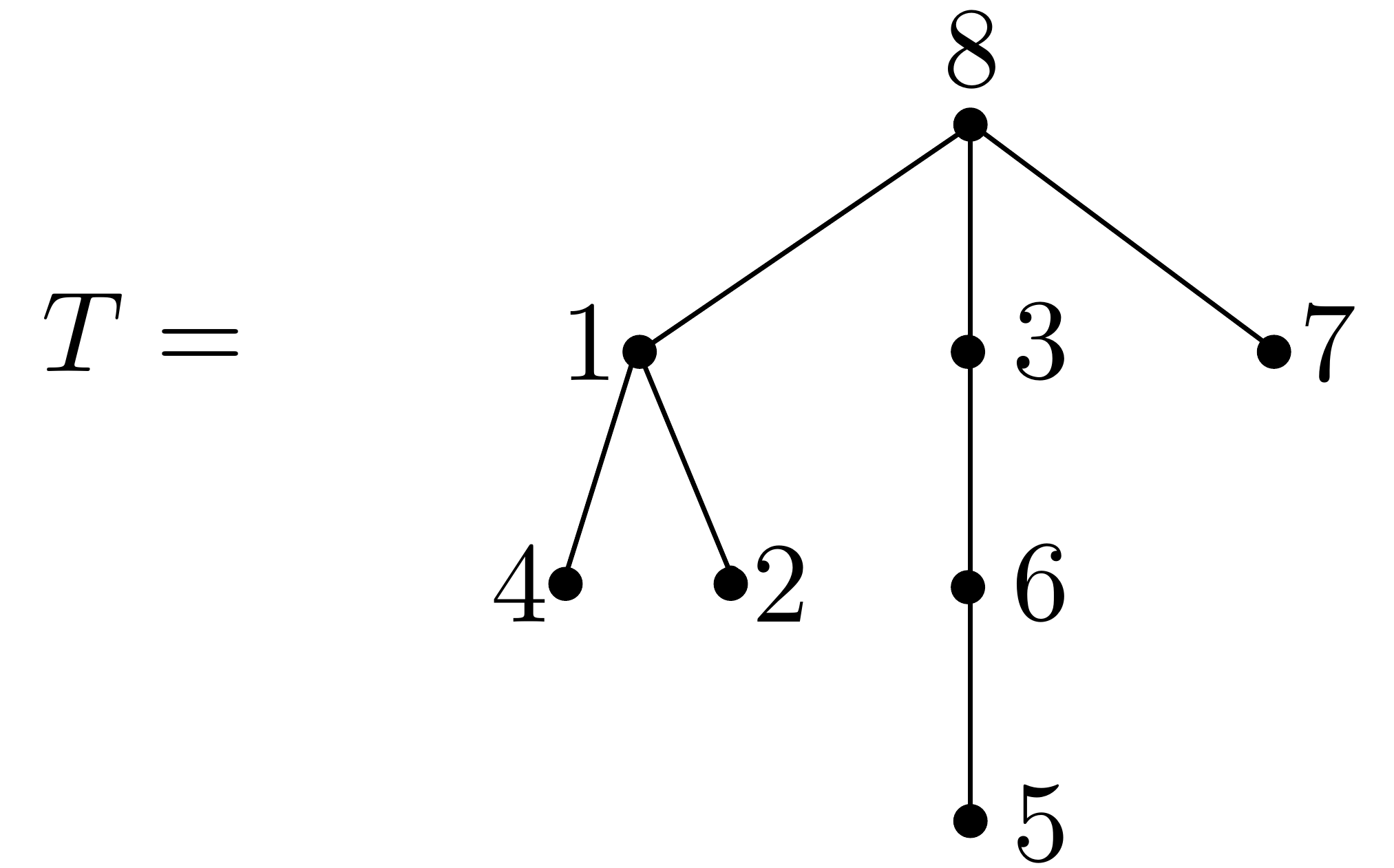} \end{center}

Let $\mathcal W$ be the set of all words  $w_1w_2\dots w_m$, where $m \ge 1$, with distinct letters in $\Z_{> 0}$ that satisfy the following conditions:
 \begin{enumerate}
 \item for all $i \in [m-1]$, if $w_i$ is odd then $w_i < w_{i+1}$
 \item  for all $i \in [m-1]$, if $w_i$ is even then $w_i > w_{i+1}$
 \item if $w_m$ is odd then it is the smallest letter of $w$
 \item if $w_m$ is even then it is the largest letter of $w$.
  \end{enumerate}

\begin{lemma}Let $T \in \idt$. Then $\pw(\hat T) \in \mathcal W$ and $\pw(\tilde T) \in \mathcal W$.  \end{lemma}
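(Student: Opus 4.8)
The statement asks us to verify that for any ID tree $T$, both postorder words $\pw(\hat T)$ and $\pw(\tilde T)$ satisfy the four defining conditions of $\mathcal W$. My plan is to prove this by structural induction on the tree, using the recursive definition of the postorder word, and to handle the two plane trees $\hat T$ and $\tilde T$ in a unified way by appealing to Proposition~\ref{larsmlprop}, which tells us that the internal-node conditions (1) and (2) of the ID definition hold regardless of whether we root at the largest or the smallest node. The letters of $\pw(T)$ are automatically distinct since $T$ has distinct node labels, so the only substantive content is conditions (1)--(4) of the definition of $\mathcal W$.

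\textbf{Key steps.} First I would verify the base case $|T|=1$ directly: then $\pw = r$ is a single letter, and conditions (1), (2) are vacuous while (3), (4) hold trivially since the single letter is simultaneously the smallest and largest. For the inductive step, let $T$ have root $r$ with subtrees $T_1,\dots,T_k$ rooted at the children of $r$, ordered left to right according to the prescribed ordering (increasing for odd children, decreasing for even children). By the recursive definition, $\pw(\hat T)=\pw(\hat{T_1})\cdots\pw(\hat{T_k})\cdot r$, and by induction each $\pw(\hat{T_i})$ lies in $\mathcal W$. The heart of the argument is to check the two ``interface'' conditions: the transition between the last letter of $\pw(\hat{T_i})$ and the first letter of $\pw(\hat{T_{i+1}})$ (for $i<k$), and the transition between the last letter of $\pw(\hat{T_k})$ and the appended root $r$. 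For the first kind of interface, I would note that by the inductive hypothesis condition (4) (resp.\ (3)) applied to $T_i$, the last letter of $\pw(\hat{T_i})$ is the root $r_i$ of $T_i$, which is a child of $r$; whether the transition must be increasing or decreasing is dictated by the parity of $r_i$, and I would check this against the left-to-right ordering of the children (increasing among odd children of $r$, decreasing among even children of $r$). For the final interface with $r$, the last letter is $r_k$, the rightmost child of $r$, and I must verify $r_k < r$ when $r_k$ is odd and $r_k > r$ when $r_k$ is even; this is exactly conditions (1)--(2) of the ID definition relating $r$ to its children and descendants, combined with the fact that $r$ is even for $\hat T$ (so all children are odd and smaller, giving condition (4) for $\pw(\hat T)$) and odd for $\tilde T$ (giving condition (3)).

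\textbf{Main obstacle.} The delicate point, and where I would spend the most care, is the first kind of interface between consecutive subtrees $T_i$ and $T_{i+1}$. Here the last letter of $\pw(\hat{T_i})$ is $r_i$ but the first letter of $\pw(\hat{T_{i+1}})$ is \emph{not} $r_{i+1}$; it is the first letter of the postorder word of the leftmost sub-subtree of $T_{i+1}$, which could be a deep descendant. So verifying condition (1) or (2) at this seam requires comparing $r_i$ against an arbitrary node of $T_{i+1}$. The resolution is that all children $r_1,\dots,r_k$ of $r$ have the \emph{same} parity (all odd if $r$ is even, all even if $r$ is odd), and by the ID conditions (1)--(2) every descendant of $r_{i+1}$ is smaller than $r_{i+1}$ when $r_{i+1}$ is even and larger when odd; combining the left-to-right ordering of the siblings with these descendant inequalities yields the required comparison between $r_i$ and the first letter of $\pw(\hat{T_{i+1}})$. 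Once this bookkeeping on parities and the increasing/decreasing sibling orders is set up carefully, conditions (1)--(4) follow, and the argument for $\tilde T$ is identical with the roles of odd/even and smaller/larger interchanged, as guaranteed by Proposition~\ref{larsmlprop}.
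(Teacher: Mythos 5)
Your proposal is correct and follows essentially the same route as the paper: a simultaneous induction on $|T|$ for $\hat T$ and $\tilde T$, decomposing the postorder word as the concatenation of the subtree words followed by the root, and checking the seams between consecutive subtree words using the left-to-right sibling ordering together with the fact that every node of a subtree lies on the appropriate side (larger/smaller) of that subtree's root. The only quibble is notational: the subtrees of $\hat T$ hanging from the (odd) children of the even root are rooted at their smallest nodes, so they are plane trees of the form $\tilde T_i$ rather than $\hat T_i$ --- precisely the alternation you invoke via Proposition~\ref{larsmlprop} --- and your interface argument is the paper's.
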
 

\begin{proof} 
Suppose $T \in \idt$. We prove the result for $\hat T $ and  $\tilde T $  simultaneously by induction on $|T|$. The result is obviously true for $|T|=1$, so assume $|T|>1$.   Let $v_1<\cdots < v_k$ be the children of the root $r$ of $\hat T$.  For each $i$,  the subtree of $\hat T$ rooted at $v_i$ is of the form $\tilde T_i$ for some ID tree $T_i$ by Proposition ~\ref{larsmlprop}.
We have $$\pw(\hat T) = \pw(\tilde T_1) \cdot \,\,\, \cdots \,\,\, \cdot \pw(\tilde T_k) \cdot r,$$
where $\cdot$ denotes concatenation.  Since the last letter of $\pw(\hat T) $ is the root of $\hat T$, it is even and is larger than all the other letters.

By induction on $|T|$, each $\pw(\tilde T_i)$ is in $\mathcal W$. We also know that the last letter of $\pw(\tilde T_i)$ is the root of $\tilde T_i $, which is odd.  Hence to prove that $\pw(\hat T) \in \mathcal W$ we need only show that the last letter $v_i$ of $\pw(\tilde T_i)$ is less than the first letter of $\pw(\tilde T_{i+1})$ for all $i \in [k-1]$.  (Note that the last letter of $\pw(\tilde T_k)$ is less than $r$).  The first letter of $\pw(\tilde T_{i+1})$ is greater than the root $v_{i+1}$ of $\tilde T_{i+1}$.  Since $v_i < v_{i+1}$, we have that $v_i $ is less than the first letter of $\pw(\tilde T_{i+1})$.  Thus $\pw(\hat T) \in \mathcal W$.  An analogous argument yields $\pw(\tilde T) \in \mathcal W$.
\end{proof}

Given a word $w = w_1w_2 \cdots w_m$ over alphabet $\Z_{> 0}$, we say that $w_j$ is a {\em right-to-left minimum} if $w_j < w_i$ for all $i >j$. Similarly we say that $w_j$ is a {\em right-to-left maximum} if $w_j >w_i$ for all $i >j$. For example, the right-to-left minima of $2156437$ are $1,3,7$.  We also let $w^\prime$ denote $w$ with its last letter removed, that is $w^\prime = w_1 \cdots w_{m-1}$.

\begin{lemma}\label{lrmlem} Let $T \in \idt$. Then the children of the root of $\hat T$ are exactly the right-to-left minima of $\pw(\hat T)^\prime$, and the children of the root of $\tilde T$ are exactly the right-to-left maxima of the word $\pw(\tilde T)^\prime$.
\end{lemma}

\begin{proof} Let $v_1<\cdots < v_k$ be the children of the root $r$ of $\hat T$.  Each $v_j$ is less  than all  its descendants in $\hat T$, so for all $i<j$, we have that $v_i$ is less than all descendents of $v_j$. Hence, $v_i$, which is the last letter of the postorder word of the subtree rooted at $v_i$, is less than all subsequent letters of $\pw(\hat T)^\prime$.   Conversely,  if $v$ is not a child of $r$ then it has an odd anscestor $u$, which must be less than $v$.  Since $u$ appears after $v$ in the postorder word, $v$ is not a right-to-left minimum.  The proof for $\tilde T$ is completely analogous. \end{proof}

\begin{lemma} \label{wordlemma} The map $ \pw: \{\hat T : T \in \idt\} \cup \{\tilde T : T \in \idt\} \to \mathcal W $ is a bijection.
\end{lemma}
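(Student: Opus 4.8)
The plan is to prove that $\pw$ is a bijection by constructing an explicit inverse map from $\mathcal{W}$ back to the set of plane trees $\{\hat T : T \in \idt\} \cup \{\tilde T : T \in \idt\}$. The natural strategy is to decompose a word $w \in \mathcal{W}$ according to its distinguished extremal positions, reconstruct the subtrees hanging off the root, and recurse. Two preliminary lemmas (the Lemma that $\pw(\hat T), \pw(\tilde T) \in \mathcal{W}$, and Lemma~\ref{lrmlem}) already do most of the structural work: the first shows the map lands in $\mathcal{W}$, and the second identifies the children of the root with the right-to-left minima (resp.\ maxima) of $\pw(\hat T)'$ (resp.\ $\pw(\tilde T)'$).

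First I would establish injectivity together with a reconstruction procedure. Given $w = w_1 \cdots w_m \in \mathcal{W}$, the last letter $w_m$ is forced to be the root: by conditions (3) and (4), $w_m$ is either the smallest letter (so the tree is a $\tilde T$, rooted at its smallest node) or the largest letter (so it is a $\hat T$). Assume $w_m$ is even and hence the largest letter, so we expect $w = \pw(\hat T)$. By Lemma~\ref{lrmlem} the children of the root are the right-to-left minima of $w' = w_1 \cdots w_{m-1}$; these minima mark the endpoints of the postorder words of the subtrees rooted at the children. Concretely, if the right-to-left minima of $w'$ occur at positions $p_1 < p_2 < \cdots < p_k = m-1$, then the factors $w_1\cdots w_{p_1}$, $w_{p_1+1}\cdots w_{p_2}$, \ldots are exactly the postorder words $\pw(\tilde T_1), \ldots, \pw(\tilde T_k)$ of the subtrees, each of which ends in its own root (an odd smallest letter, so each factor is a $\tilde T$). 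I would verify that each such factor again lies in $\mathcal{W}$ (the inherited inequalities from conditions (1),(2) carry over, and the right-to-left minimum property guarantees the last letter of each factor is smallest within that factor), and then recurse. This shows the tree is uniquely determined by $w$, giving injectivity, and the recursion simultaneously reattaches the children to the root in increasing/decreasing order as dictated by the parity rule, so the reconstructed plane tree is genuinely of the form $\hat T$.

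For surjectivity I would check that the reconstruction procedure, applied to an arbitrary $w \in \mathcal{W}$, always terminates in a valid plane tree whose postorder word is $w$. The key point is that the recursion is well-founded: each factor is strictly shorter than $w$, and the parity/extremality conditions (1)--(4) are exactly what is needed to guarantee each factor again satisfies (1)--(4), so the induction hypothesis applies. One must confirm that the resulting tree is an $\hat T$ (resp.\ $\tilde T$) for a genuine ID tree $T$: the alternation of parities between a node and its children, forced by the fact that odd letters have even successors and vice versa along each subtree boundary, is precisely conditions (1) and (2) of the ID tree definition, and Proposition~\ref{larsmlprop} lets us freely pass between the root-at-largest and root-at-smallest descriptions.

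The main obstacle I anticipate is the careful bookkeeping in the recursive decomposition step: verifying that splitting $w'$ at its right-to-left minima (resp.\ maxima) produces factors that each individually satisfy all four membership conditions for $\mathcal{W}$, and that the boundary inequalities between consecutive factors (the last letter of one factor being smaller than the first letter of the next) are correctly recovered. This is essentially the converse direction of the inductive argument used in the first Lemma, and it is the place where the alternating increasing/decreasing structure of $\mathcal{W}$ must be exploited most delicately. Once the decomposition is shown to be well-defined and to invert $\pw$, the bijection follows, since $\pw$ then has a two-sided inverse.
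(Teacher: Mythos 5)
Your proposal is correct and follows essentially the same route as the paper: both construct an explicit inverse by recursively decomposing $w'$ at its right-to-left minima (resp.\ maxima), verifying each resulting segment again lies in $\mathcal W$, and using Lemma~\ref{lrmlem} to identify those extrema with the children of the root so that the reconstruction inverts $\pw$. The paper packages this as a recursively defined map $\gamma$ checked to be a two-sided inverse of $\pw$ by induction on $|T|$, which is exactly the bookkeeping you anticipate.
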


\begin{proof}To show that the map is a bijection, we construct  a map $$\gamma: \mathcal W \to \{\hat T : T \in \idt \} \cup 
\{\tilde T : T \in \idt \}$$  and show that it is the inverse of $w$.

We define $\gamma$ recursively.  Let $w =  w_1\cdots w_m \in \mathcal W$.  If $m=1$, let 
 $ \gamma(w)$ be the tree with the single node $w_1$. 

Now suppose $m >1$.  To define $\gamma$, we need two cases:

{\bf Case 1:} Assume  $w_m$ is even. Then it is the  largest letter of $w$. Let $j_1<\cdots<j_k$ be  such that $w_{j_1} \cdots, w_{j_k}$ are the right-to-left minima of the word $w^\prime:= w_1w_2\dots w_{m-1}$. 
Note that $j_k ={m-1}$.  For each $i = 1,\dots,k$, let $\alpha_i(w) $ be the segment 
$w_{j_{i-1}+1}w_{j_{i-1}+2} \cdots w_{j_i}$, where $j_0 = 0$.  Hence $w^\prime$ is the concatenation of the words 
$\alpha_1(w) , \alpha_2(w),\dots,\alpha_k(w)$. 

For example, given the word $w=21564378$ in $\mathcal W$, we have, $w' = 2156437$, and $w'$ breaks up into $\alpha_1(w) = 21$, $\alpha_2(w) = 5643$, and $\alpha_3(w) = 7$.

Since for all $i$,  $\alpha_i(w)$  is a segment of $w \in \mathcal W$, it satisfies conditions (1) and (2) of the definition of $\mathcal W$.
We claim that conditions (3) and (4) hold as well.   Indeed, the last letter $w_{j_{i}}$ of $\alpha_i(w) $  is odd since it is a right-to-left minimum of $w^\prime$ and is therefore is followed by a larger letter in $w$.  The last letter  of $\alpha_i(w)$ is smaller than all the other letters of $\alpha_i(w)$ since  none of the other letters are  right-to-left minima.  Hence (3) holds and (4) holds vacuously. Thus $\alpha_i(w) \in \mathcal W$.   

Now, for each $i$, we can recursively apply $ \gamma$ to  $\alpha_i(w)$ to obtain a plane tree  $ \gamma(\alpha_i(w))$  in $\{\tilde T : T \in \idt\}$.  Let $\gamma(w)$ be the plane tree constructed as follows: 
\begin{enumerate}
\item The root is $w_m$, which is even and largest.
\item The children of $w_m$ are the right-to-left minima $w_{j_1}< \cdots <w_{j_k}$, which are odd.
\item The subtree rooted at $w_{j_i}$ is $ \gamma(\alpha_i(w))$ for each $i$.
\end{enumerate}
Clearly $\gamma(w) \in  \{\hat T : T \in \idt\}$. 

{\bf Case 2:} Assume  $w_m$ is odd.
An analogous argument allows us to decompose $w = w_1 \cdots w_{m-1} \in {\mathcal W}$ into  segments $\alpha_i(w) \in \mathcal W$, whose last letter is the $i$th  right-to-left maximum of $w$.   We define  $\gamma(w)$ to be the plane tree constructed as follows: 
\begin{enumerate}
\item The root is $w_m$, which is odd.
\item The children of $w_m$ are the right-to-left maxima $w_{j_1}> \cdots >w_{j_k}$, which are even
\item The subtree rooted at $w_{j_i}$ is $\gamma(\alpha_i(w))$ for each $i$.
\end{enumerate} 
We have $\gamma(w) \in  \{\tilde T : T \in \idt\}$.

Now that we have shown that $\gamma$ is well-defined, it remains to check that the maps $\pw$ and $\gamma$ are inverses of each other. We  prove $\gamma(\pw(\hat T) )= \hat T$  and $\gamma(\pw(\tilde T) )= \tilde T$,
for all $T \in \idt$, 
 by induction on $|T|$. If $|T| = 1$, this is clear, so suppose $|T| > 1$.     
 
 {\bf Case 1:}  $\hat T$. By Lemma~\ref{lrmlem}, the children $ v_1<v_2 <\dots < v_k$ of the root $r$ of $\hat T$ are exactly the right-to-left minima of   the word $\pw(\hat T)^\prime = \pw(\tilde T_1) \cdot \pw(\tilde T_2) \cdot \,\, \cdots \,\, \cdot  \pw(\tilde T_k) $, where $\tilde T_i$ is the subtree of $\hat T$ rooted at $v_i$.
Since $v_i$ is the last letter of $\pw(\tilde T_i)$, we have $\alpha_i( \pw(\hat T)) = \pw(\tilde T_i )$ for all $i$.  
By induction, $\gamma(\alpha_i( \pw(\hat T))) =  \gamma( \pw(\tilde T_i )) = \tilde T_i .$

Now  we have that $\gamma ( \pw(\hat T))$ is the  plane tree whose root is the last letter of $ \pw(\hat T)$, which is $r$.  The children of $r$ are the right-to-left minima of $\pw(\hat T)^\prime$ which are $v_1<\dots< v_k$.  The subtree rooted at $v_i$ is $ \gamma(\alpha_i( \pw(\hat T)))$ which is $\tilde T_i$.  Hence  $\gamma ( \pw(\hat T)) = \hat T$, as desired.    

 {\bf Case 2:}  $\tilde T$. A completely analogous argument gives $ \gamma ( \pw(\tilde T)) = \tilde T$. 
 
 Hence $ \gamma \circ \pw$ is the identity map on $\{\hat T : T \in \idt\} \cup 
\{\tilde T : T \in \idt\}$.  A similar argument can be used to prove that 
the other composition $ \pw \circ \gamma$ equals the identity map on  $\mathcal W$.
 \end{proof}
 
For any finite subset $A$ of  $\Z_{> 0}$, let $\idt_A = \{T \in \idt : \mbox{the node set of $T$ is $A$}\}$ and let $\mathcal W_A = \{w \in \mathcal W : \mbox{the  set of letters of $w$ is $A$}\}$. Recall that $\mathcal{DC}_A$ is the set of D-cycles on $A$.
 
  \begin{theorem} \label{IncDecBijection} For   any finite suubset $A$ of  $\Z_{> 0}$, let $\psi:\idt_A \to \mathcal{DC}_{A}$ be the map defined by letting $\psi(T)$ be the cycle $(\pw(\hat T))$ in $\sg_A$.  Then $\psi$ is a well defined bijection.  Consequently $|\idt_A| = |\mathcal{DC}_{A}|$.
 \end{theorem}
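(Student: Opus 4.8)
The plan is to factor $\psi$ into a short chain of bijections and to read the defining inequalities of a D-cycle directly off the defining conditions of $\mathcal W$. The case $|A|=1$ is immediate, since then both $\idt_A$ and $\mathcal{DC}_A$ are singletons, so I would assume $|A|>1$; in this case the root of $\hat T$ is the (even) largest element of $A$, and hence every word $\pw(\hat T)$ ends in $\max A$.

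First I would check that $\psi$ is well defined. Writing $w=\pw(\hat T)=w_1\cdots w_m\in\mathcal W_A$ (we have already shown $\pw(\hat T)\in\mathcal W$), the cycle $\sigma=(w)$ satisfies $\sigma(w_i)=w_{i+1}$ cyclically. Conditions (1) and (2) in the definition of $\mathcal W$ say precisely that $\sigma(w_i)>w_i$ when $w_i$ is odd and $\sigma(w_i)<w_i$ when $w_i$ is even, for $i<m$; and since $w_m=\max A$ is even, we have $\sigma(w_m)=w_1<w_m$. As every element of $A$ occurs among the $w_i$, this shows $\sigma$ is a D-permutation, and being a single cycle it lies in $\mathcal{DC}_A$.

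Next I would recast the map at the level of words. The assignment $T\mapsto\hat T$ is a bijection from $\idt_A$ onto $\{\hat T:T\in\idt_A\}$ (rooting at the largest node and ordering the children is canonical, and forgetting this data recovers $T$), and by Lemma~\ref{wordlemma} the map $\pw$ restricts to a bijection from $\{\hat T:T\in\idt_A\}$ onto $\{w\in\mathcal W_A:w_m=\max A\}$, these being exactly the words produced by Case 1 of $\gamma$. It therefore suffices to show that $w\mapsto(w)$ is a bijection from $\{w\in\mathcal W_A:w_m=\max A\}$ onto $\mathcal{DC}_A$. For this I would first observe that $\max A$ is forced to be even for every $\sigma\in\mathcal{DC}_A$: such a $\sigma$ is fixed-point free, so $\sigma(\max A)<\max A$, which would contradict the odd D-condition were $\max A$ odd. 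Consequently every D-cycle has a unique cyclic representation ending in its largest letter; injectivity of $w\mapsto(w)$ on words ending in $\max A$ is then immediate, and for surjectivity one takes the representation of a given $\sigma\in\mathcal{DC}_A$ ending in $\max A$ and reverses the translation above (the odd D-condition gives condition (1), the even one gives condition (2), and $w_m=\max A$ even gives condition (4), with (3) vacuous) to see that this word lies in $\{w\in\mathcal W_A:w_m=\max A\}$. Composing the three bijections yields $\psi$, and counting cardinalities gives $|\idt_A|=|\mathcal{DC}_A|$.

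I expect the only delicate point to be the normalization that turns the many-to-one word-to-cycle map into a genuine bijection. The whole argument rests on the parity observation that $\max A$ must be even — for $\hat T$ because it is the root, and for a D-cycle because its image is strictly smaller — so that each D-cycle has exactly one representative ending in its largest letter, namely the word produced as $\pw(\hat T)$. Once this is in place, conditions (1)--(4) of $\mathcal W$ and the D-permutation inequalities are literally the same statement read cyclically, and everything else is bookkeeping built on Lemma~\ref{wordlemma}.
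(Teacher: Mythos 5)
Your proof is correct and follows essentially the same route as the paper's: both factor $\psi$ into the three bijections $T\mapsto\hat T$, $\hat T\mapsto\pw(\hat T)$ (via Lemma~\ref{wordlemma}), and $w\mapsto(w)$ on words ending in their largest letter. You simply spell out in more detail the step the paper dispatches in one line, namely why the word-to-cycle map is a well-defined bijection onto $\mathcal{DC}_A$ (the parity of $\max A$ and the matching of conditions (1)--(4) with the D-cycle inequalities).
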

 
 \begin{proof} Consider the following maps: 
 \begin{enumerate}
\item  the map from $ \idt_A$ to  $\{\hat T: T \in \idt_A\}$ defined by $T \mapsto \hat T$
\item  the map from  $\{\hat T: T \in \idt_A\}$ to $\{w \in \mathcal W_A : \mbox{ last letter of $w$ is largest} \}$ defined by $\hat T  \mapsto \pw(\hat T)$
\item the map from $\{w \in \mathcal W_A : \mbox{last letter of $w$ is largest} \}$ to $\mathcal{DC}_{A}$ defined by $w \mapsto (w)$.
\end{enumerate} 
It is clear that the first map is a bijection. It follows from Lemma~\ref{wordlemma} that the second is also bijection.  The third map  is  a well defined bijection since  the last letter of $w$ must be even.   Since $\psi$ is the composition of  three bijections, it is also a bjection.
  \end{proof}

An analogous argument can be used to show that the map $\idt_A \to \mathcal{DC}_{A}$ that takes $T$ to the cycle $(\pw(\tilde T))$ is also a well defined bijection.  The following result shows  that this map is, in fact, identical to $\psi$.

\begin{proposition} \label{cycleslem} For all $T \in \idt_A$,   the cycles $(\pw(\hat T))$ and $(\pw(\tilde T))$ in $\sg_A$ are equal.  
\end{proposition}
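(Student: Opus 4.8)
The plan is to reduce the statement to an elementary fact about words. Since the letters of $\pw(\hat T)$ and $\pw(\tilde T)$ are distinct (they are precisely the elements of $A$), the cycle $(w)$ associated to a word $w$ depends only on the cyclic sequence of its letters. In particular, for any factorization of a word into two subwords $X$ and $Y$ one has $(XY) = (YX)$. So it suffices to exhibit a single factorization $\pw(\hat T) = XY$ for which $\pw(\tilde T) = YX$, and I would obtain such a factorization by cutting $T$ along the edge joining its smallest and largest nodes.

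First I would record the structural fact that, when $|T| > 1$, the smallest node $s$ (which is odd) and the largest node $t$ (which is even) are adjacent in $T$. Rooting $T$ at $t$ as in the construction of $\hat T$, the children $v_1 < \cdots < v_k$ of $t$ are all odd, and by Proposition~\ref{larsmlprop} (equivalently, directly from condition~(1) of the definition of an ID tree) each $v_i$ is the smallest node of the subtree it roots. Hence the global minimum $s$, which lies in one of these subtrees, must coincide with that subtree's root, so $s = v_1$ is a child of $t$.

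Next I would cut the edge $\{s,t\}$, splitting $T$ into the component $T_s$ containing $s$ and the component $T_t$ containing $t$; both are ID trees. Form the plane trees $\widetilde{T_s}$ (rooted at $\min T_s = s$) and $\widehat{T_t}$ (rooted at $\max T_t = t$), and set $X := \pw(\widetilde{T_s})$ and $Y := \pw(\widehat{T_t})$. The point is that the child-ordering rule used in $\hat T$ and $\tilde T$ is local, depending only on the parity of a node, so the subtree of $\hat T$ rooted at $s$ is exactly $\widetilde{T_s}$ and the subtree of $\tilde T$ rooted at $t$ is exactly $\widehat{T_t}$. Since $t$ is even, its children in $\hat T$ appear in increasing order, so the smallest child $s = v_1$ is listed first; the postorder recursion then gives $\pw(\hat T) = \pw(\widetilde{T_s}) \cdot \pw(\widehat{T_t}) = XY$, the second factor being $Y$ because the remaining children $v_2 < \cdots < v_k$ of $t$ are exactly the children of $t$ in $\widehat{T_t}$. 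Symmetrically, since $s$ is odd its children in $\tilde T$ appear in decreasing order, so the largest child $t$ is listed first, yielding $\pw(\tilde T) = \pw(\widehat{T_t}) \cdot \pw(\widetilde{T_s}) = YX$. Combining this with $(XY) = (YX)$ completes the argument, and notably no induction is required.

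The only delicate part is the bookkeeping in the last step: one must verify that the two blocks $X$ and $Y$ extracted from $\pw(\hat T)$ are literally the same words that appear, in swapped order, in $\pw(\tilde T)$. This rests on the identifications of $\widetilde{T_s}$ with the subtree of $\hat T$ rooted at $s$ and of $\widehat{T_t}$ with the subtree of $\tilde T$ rooted at $t$, which in turn depend on the adjacency of $s$ and $t$ and on the locality of the child-ordering rule. Once these identifications are secured, the factorizations $\pw(\hat T)=XY$ and $\pw(\tilde T)=YX$ are forced, and the proposition follows at once.
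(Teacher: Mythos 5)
Your proof is correct and is essentially the paper's own argument: the paper likewise observes that $s$ and $t$ are adjacent, writes $\pw(\tilde T)$ as $\pw(T_1)\cdots\pw(T_k)\cdot t\cdot\pw(S_1)\cdots\pw(S_j)\cdot s$ and $\pw(\hat T)$ as $\pw(S_1)\cdots\pw(S_j)\cdot s\cdot\pw(T_1)\cdots\pw(T_k)\cdot t$, and concludes they are cyclic shifts of each other. Your packaging of these two blocks as $X$ and $Y$ obtained by cutting the edge $\{s,t\}$, together with $(XY)=(YX)$, is just a tidier phrasing of the same decomposition.
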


\begin{proof} Let $s$ be the smallest element of $A$ and $t$ the largest element.  Then $s$ is the root of $\tilde T$ and  $t$ is its leftmost child.  Let $t_1<\dots,<t_k$ be be the children of $t$ in $\tilde T$ and  let $t>s_1 >\dots > s_j$ be the children of $s$.   For each $i \ge 1$, let $T_i$ be the subtree  of $\tilde T$  rooted at $t_i$ and let $S_i$ be the subtree of $\tilde T$ rooted at $s_i$.
Then $\pw(\tilde T)$ is the concatenation  $$\pw(T_1) \cdots \pw(T_k) \cdot t \cdot \pw(S_1) \cdots \pw(S_j) \cdot s.$$

Now let us look at $\pw(\hat T)$. Clearly $t$ is the root of $\hat T$ and $s$ is its leftmost child.  The children of $t$ are $s< t_1<\dots < t_k$.  The children of $s$ are $s_1 < \dots < s_j$.  For each $i \ge 1$, the  subtree  of $\hat T$  rooted $s_i$ is $S_i$ and the subtree rooted at $t_i$ is $T_i$.
Then $\pw(\hat T)$ is  the concatenation  $$\pw(S_1) \cdots \pw(S_j) \cdot s \cdot \pw(T_1) \cdots \pw(T_k) \cdot t .$$  From this we see that 
$\pw(\tilde T)$ and $\pw(\hat T)$ are related by a cyclic shift.  Hence the cycles are the same.
\end{proof}

The {\em cycle support} of $\sigma \in \s_n$ is the partition $\cyc(\sigma) \in \Pi_n$ whose blocks are comprised of the elements of the cycles of $\sigma$.  For example,
$$\cyc((1,7,2,4)(5)(6,8,9,3)) = 1247|5|3689.$$
The bijection in Theorem \ref{IncDecBijection} extends to a bijection between  ID forests and D-permutations.  Under this bijection, the blocks of the cycle support of the image of an ID forest are   the node sets of the trees  of the ID forest.  Hence we have the following consequence of Theorem \ref{IncDecBijection}.

\begin{corollary} \label{IncDecBijectionCor} Let $\pi$ be a partition of a finite subset of  $Z_{>0}$.  Then the ID forests whose trees have node sets equal to the blocks of $\pi$ are in bijection with the D-permutations  whose cycle support is $\pi$.
\end{corollary}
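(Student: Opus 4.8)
The plan is to obtain the bijection by applying the tree-to-cycle correspondence $\psi$ of Theorem~\ref{IncDecBijection} separately to each connected component, exactly as the increasing-tree bijection of Section~\ref{bondsubsec} was extended from trees to forests. Write $\pi = B_1 | \cdots | B_k$ for the blocks of the given partition and let $A = \bigcup_{i=1}^k B_i$ be the underlying set. First I would define the forward map. An ID forest $F$ whose trees have node sets $B_1, \dots, B_k$ is precisely a choice of an ID tree $T_i \in \idt_{B_i}$ for each $i$ (this is just the definition of an ID forest together with the stipulation on the node sets). Applying $\psi$ to each component gives a D-cycle $\psi(T_i) \in \mathcal{DC}_{B_i}$, and since the supports $B_1, \dots, B_k$ are pairwise disjoint, the product $\sigma := \psi(T_1) \cdots \psi(T_k)$ is a well-defined permutation of $A$. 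I would send $F$ to $\sigma$.

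The two things to check are that $\sigma$ is a D-permutation and that $\cyc(\sigma) = \pi$. Both are immediate from the observation that the defining inequality of a D-permutation is \emph{local}: for each $i \in A$ there is a unique block $B_j$ containing $i$, and $\sigma(i) = \psi(T_j)(i)$, so $i \le \sigma(i)$ when $i$ is odd and $i \ge \sigma(i)$ when $i$ is even, because $\psi(T_j)$ is a D-cycle on $B_j$. Hence $\sigma \in \mathcal D_A$. Moreover the cycles of $\sigma$ are exactly $\psi(T_1), \dots, \psi(T_k)$, whose supports are $B_1, \dots, B_k$, so the cycle support of $\sigma$ is $B_1 | \cdots | B_k = \pi$.

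Finally, I would describe the inverse and conclude. Given a D-permutation $\sigma \in \s_A$ with $\cyc(\sigma) = \pi$, factor $\sigma$ into its disjoint cycles; the cycle supported on $B_i$ again satisfies the local D-inequalities on $B_i$, hence is a D-cycle in $\mathcal{DC}_{B_i}$. Applying $\psi^{-1}$ (which exists by Theorem~\ref{IncDecBijection}) to each of these cycles produces an ID tree on $B_i$, and assembling them gives an ID forest whose trees have node sets the blocks of $\pi$. Since $\psi$ is a bijection on each block and the cycle decomposition of a permutation is unique, these two maps are mutually inverse. I do not expect a genuine obstacle here: the only point needing care is the locality of the D-condition, which guarantees simultaneously that a product of D-cycles on disjoint blocks is a D-permutation and that every cycle of a D-permutation is itself a D-cycle; everything else is the routine component-by-component extension of an already-established bijection.
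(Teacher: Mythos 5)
Your proposal is correct and follows essentially the same route as the paper, which simply asserts that the bijection of Theorem~\ref{IncDecBijection} extends componentwise from ID trees to ID forests; you have just written out the details (locality of the D-condition, disjointness of cycle supports, and the inverse via the cycle decomposition) that the paper leaves implicit.
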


As a consequence of Theorem \ref{doubth} and Corollary ~\ref{IncDecBijectionCor}, we have the following result.

\begin{theorem} \label{mobth}
Let $A$ be a finite subset of $\Z_{>0}$.  For  all $\pi \in  \Pi_{\Gamma_{A}}$,
$$(-1)^{|\pi|}\mu_{\Pi_{\Gamma_{A}}}(\hat{0},\pi) = |\{\sigma \in \mathcal{D}_{A} \st \cyc(\sigma) = \pi\} |.$$ 
Consequently, \begin{equation} \label{chardumeq}  \chi_{\mathcal L(\Pi_{\Gamma_A})}(t) = 
\sum_{k=1}^{2n} s_D(A,k) t^{k-1},\end{equation}
where $(-1)^{k}s_D(A,k)$ is equal to  the number of D-permutations on $A$ with exactly $k$ cycles.
\end{theorem}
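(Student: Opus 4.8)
The plan is to derive Theorem~\ref{mobth} as a direct consequence of the two results it cites, namely Theorem~\ref{doubth} and Corollary~\ref{IncDecBijectionCor}, with essentially no new computation required. First I would recall from Theorem~\ref{doubth} that for all $\pi \in \Pi_{\Gamma_{A}}$ the signless M\"obius value $(-1)^{|\pi|}\mu_{\Pi_{\Gamma_{A}}}(\hat 0, \pi)$ equals the number of ID forests on $A$ whose trees have node sets equal to the blocks of $\pi$. Then I would invoke Corollary~\ref{IncDecBijectionCor}, which provides a bijection between precisely these ID forests and the D-permutations $\sigma \in \mathcal{D}_{A}$ whose cycle support satisfies $\cyc(\sigma) = \pi$. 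Chaining the equality from Theorem~\ref{doubth} with the cardinality equality furnished by this bijection immediately yields
$$(-1)^{|\pi|}\mu_{\Pi_{\Gamma_{A}}}(\hat{0},\pi) = |\{\sigma \in \mathcal{D}_{A} \st \cyc(\sigma) = \pi\}|,$$
which is the first assertion of the theorem.

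For the consequent formula \eqref{chardumeq}, I would start from the definition of the characteristic polynomial, $\chi_{\Pi_{\Gamma_A}}(t) = \sum_{\pi} \mu_{\Pi_{\Gamma_A}}(\hat 0, \pi)\, t^{\ell - \mathrm{rk}(\pi)}$, and observe that in the bond lattice of $\Gamma_A$ a partition $\pi$ of rank $r$ has exactly $|A| - r$ blocks, i.e.\ $|\pi| = |A| - \mathrm{rk}(\pi)$, so that $\mathrm{rk}(\pi) = |A| - |\pi|$. Writing $k = |\pi|$ for the number of blocks and noting that the length $\ell$ of the lattice corresponds to the top rank (so $t^{\ell - \mathrm{rk}(\pi)}$ becomes $t^{k-1}$ after the standard reindexing used throughout the paper, compare \eqref{stireq} and Theorem~\ref{doubth}), I would group the sum over $\pi$ by the number of blocks $k$ and apply the just-established identity. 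The coefficient of $t^{k-1}$ then becomes $\sum_{|\pi| = k} \mu(\hat 0, \pi) = \sum_{|\pi|=k} (-1)^{k} |\{\sigma \in \mathcal{D}_A : \cyc(\sigma) = \pi\}|$, which collapses to $(-1)^k$ times the total number of D-permutations on $A$ with exactly $k$ cycles, exactly matching the claimed $s_D(A,k)$.

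The only genuine content beyond citation is the bookkeeping that converts the M\"obius-function statement into the polynomial identity: one must confirm that $|\pi|$ (the number of blocks, which is the number of cycles of the corresponding D-permutation) is the correct statistic governing the exponent of $t$, and that summing over all $\pi$ with a fixed block count partitions the full set $\mathcal{D}_A$ according to its number of cycles. This is routine because the cycle support map $\sigma \mapsto \cyc(\sigma)$ sends a D-permutation with $k$ cycles to a partition with $k$ blocks, so the fibers over partitions of $A$ with $k$ blocks exhaust precisely the D-permutations with $k$ cycles. I do not anticipate a real obstacle here; the potential pitfall is purely notational, namely keeping the sign $(-1)^{|\pi|}$ consistent with the sign convention $(-1)^k s_D(A,k)$ in \eqref{chardumeq} and ensuring the rank-to-block-count conversion and the reindexing of exponents agree with the normalization already fixed in Theorem~\ref{doubth}.
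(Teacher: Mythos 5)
Your proposal is correct and follows exactly the route the paper intends: the paper presents Theorem~\ref{mobth} precisely as the concatenation of Theorem~\ref{doubth} (Möbius values count ID forests refining $\pi$) with Corollary~\ref{IncDecBijectionCor} (those forests biject with D-permutations of cycle support $\pi$), and the passage to \eqref{chardumeq} is the same routine regrouping of the characteristic polynomial by block count that you carry out. Your bookkeeping of $\mathrm{rk}(\pi)=|A|-|\pi|$ and the exponent $t^{k-1}$ is consistent with the normalization already fixed in Theorem~\ref{doubth}, so there is nothing missing.
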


Equation~(\ref{chardumeq}) is equivalent to the following result on the chromatic polynomial  ${\rm ch}_G(t)$ of a general Ferrers graph $G$ (see (\ref{chromchareq}) and Remark~\ref{ferrem}).
 \begin{corollary} For all finite subsets $A$ of $\Z_{>0}$,
$${\rm ch}_{\Gamma_A} (t) = \sum_{k=1}^{2n} s_D(A,k) t^{k} .$$
\end{corollary}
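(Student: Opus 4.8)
The plan is to derive this identity directly from Equation~(\ref{chardumeq}) of Theorem~\ref{mobth}, combined with the general relation~(\ref{chromchareq}) between the chromatic polynomial of a graph and the characteristic polynomial of its bond lattice. Since each $\Gamma_A$ is a Ferrers graph, and (by Remark~\ref{ferrem}) every Ferrers graph arises as some $\Gamma_A$, establishing the formula for $\Gamma_A$ simultaneously yields the claimed statement for chromatic polynomials of all Ferrers graphs.

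First I would recall that~(\ref{chromchareq}) holds for \emph{every} graph, so applying it to $G = \Gamma_A$ gives
$${\rm ch}_{\Gamma_A}(t) = t\,\chi_{\Pi_{\Gamma_A}}(t).$$
This is the only structural input required, and it merely reflects the fact that the chromatic polynomial differs from the characteristic polynomial of the bond lattice by a single factor of $t$ (accounting for the free color of the minimum block).

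Next I would substitute the expression for $\chi_{\Pi_{\Gamma_A}}(t)$ supplied by~(\ref{chardumeq}), namely $\chi_{\Pi_{\Gamma_A}}(t) = \sum_{k} s_D(A,k)\,t^{k-1}$, where $(-1)^k s_D(A,k)$ counts the D-permutations on $A$ with exactly $k$ cycles. Multiplying through by $t$ raises each exponent by one:
$${\rm ch}_{\Gamma_A}(t) = t \sum_{k} s_D(A,k)\,t^{k-1} = \sum_{k} s_D(A,k)\,t^{k},$$
which is precisely the asserted identity.

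There is no genuine obstacle here: all the combinatorial content lives in Theorem~\ref{mobth}, and the corollary is a purely formal restatement obtained by clearing the factor of $t$. The only point worth stressing is the interpretive one already noted above: because every Ferrers graph is of the form $\Gamma_A$, the identity should be read as a combinatorial interpretation of the coefficients of the chromatic polynomial of an \emph{arbitrary} Ferrers graph in terms of D-permutations counted by number of cycles.
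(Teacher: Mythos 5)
Your proof is correct and follows exactly the route the paper takes: the corollary is stated there as an immediate equivalence of Equation~(\ref{chardumeq}) via the relation ${\rm ch}_G(t)=t\,\chi_{\Pi_G}(t)$ from~(\ref{chromchareq}), together with the observation in Remark~\ref{ferrem} that every Ferrers graph is some $\Gamma_A$. Nothing further is needed.
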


By Theorem~\ref{bondth}, we  have now proved (\ref{intorchardumeq}), which is restated here.
\begin{corollary} For all $n \ge 1$,
$$ \chi_{\mathcal L(\mathcal H_{2n-1})} = 
\sum_{k=1}^{2n} s_D(2n,k) t^{k-1},$$
where $(-1)^{k} s_D(2n,k)$  is equal to  the number of D-permutations  on $[2n]$ with exactly $k$ cycles.
\end{corollary}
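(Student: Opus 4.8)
The plan is to obtain this corollary as an immediate synthesis of the two main structural results already established: the bond lattice description of Theorem~\ref{bondth} and the D-permutation evaluation of the characteristic polynomial in Theorem~\ref{mobth}. The key underlying fact is that the characteristic polynomial of a finite ranked poset with a minimum element depends only on its isomorphism type, since both the rank function and the M\"obius values $\mu(\hat 0, x)$ are preserved by any poset isomorphism. Thus the first thing I would record is the identity $\chi_{\mathcal L(\mathcal H_{2n-1})}(t) = \chi_{\Pi_{\Gamma_{2n}}}(t)$.

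First I would invoke Theorem~\ref{bondth}, which supplies the isomorphism $\mathcal L(\mathcal H_{2n-1}) \cong \Pi_{\Gamma_{2n}}$, giving the equality of characteristic polynomials noted above. Then I would specialize Theorem~\ref{mobth} to the case $A = [2n]$, so that $\Gamma_A = \Gamma_{2n}$ and $\mathcal D_A = \mathcal D_{2n}$. Equation~(\ref{chardumeq}) of that theorem then reads
$$\chi_{\Pi_{\Gamma_{2n}}}(t) = \sum_{k=1}^{2n} s_D(2n,k)\, t^{k-1},$$
with $(-1)^{k} s_D(2n,k)$ equal to the number of D-permutations on $[2n]$ having exactly $k$ cycles. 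Combining this with the identification of characteristic polynomials from the previous step yields the claimed formula verbatim.

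The only point requiring a moment's care — and the closest thing to an obstacle here — is bookkeeping on the index range. One should confirm that $[2n]$ has exactly $2n$ elements, so that the number of cycles $k$ of a D-permutation runs over $1,\dots,2n$ and matches the upper limit of the sum (equivalently, that $s_D(2n,k)$ vanishes outside this range). All of the genuinely substantive combinatorics — the passage through ID forests, the Whitney NBC computation of Theorem~\ref{doubth}, and the postorder-word bijection of Theorem~\ref{IncDecBijection} together with Corollary~\ref{IncDecBijectionCor} — has already been carried out upstream, so no further argument is needed beyond assembling these ingredients.
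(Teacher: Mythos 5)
Your proposal is correct and matches the paper's own derivation exactly: the paper obtains this corollary by combining the isomorphism $\mathcal L(\mathcal H_{2n-1}) \cong \Pi_{\Gamma_{2n}}$ of Theorem~\ref{bondth} with Theorem~\ref{mobth} specialized to $A=[2n]$. No further comment is needed.
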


\begin{corollary} \label{mucor} For all $n \ge 1$,
\begin{align*} \chi_{\mathcal L(\mathcal H_{2n-1})}(-1) &= - |\mathcal{D}_{2n}| \\
\mu_{\mathcal L(\mathcal H_{2n-1})}(\hat 0, \hat 1) &= - |\mathcal{DC}_{2n}|.
\end{align*}
\end{corollary}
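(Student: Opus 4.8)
The plan is to derive both identities simply by specializing the characteristic polynomial formula of the preceding corollary (equation (\ref{intorchardumeq})) at the two values $t=-1$ and $t=0$, and then converting the resulting signed sums of the coefficients $s_D(2n,k)$ into counts of D-permutations. All the combinatorial work has already been done in Theorem~\ref{mobth} and its corollaries; what remains is bookkeeping.

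For the first identity I would start from $\chi_{\mathcal L(\mathcal H_{2n-1})}(t) = \sum_{k=1}^{2n} s_D(2n,k)\,t^{k-1}$ and put $t=-1$, obtaining $\chi_{\mathcal L(\mathcal H_{2n-1})}(-1) = \sum_{k=1}^{2n} (-1)^{k-1} s_D(2n,k)$. The key input is the sign convention $(-1)^{k} s_D(2n,k) = |\{\sigma \in \mathcal{D}_{2n} : \sigma \text{ has exactly } k \text{ cycles}\}|$, equivalently $s_D(2n,k) = (-1)^k |\{\sigma \in \mathcal{D}_{2n} : k \text{ cycles}\}|$. Then each term satisfies $(-1)^{k-1} s_D(2n,k) = (-1)^{2k-1}|\{\sigma : k\text{ cycles}\}| = -|\{\sigma : k\text{ cycles}\}|$, a factor of $-1$ independent of $k$. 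Summing over $k$ collapses the alternating signs and yields $-|\mathcal{D}_{2n}|$, as desired.

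For the second identity I would isolate the lowest-degree coefficient. Since $\Gamma_{2n}$ is connected, its bond lattice has $\hat 1$ equal to the single-block partition, so $\hat 1$ is the unique element of maximal rank; hence the definition of the characteristic polynomial gives $\chi_{\mathcal L(\mathcal H_{2n-1})}(0) = \mu_{\mathcal L(\mathcal H_{2n-1})}(\hat 0, \hat 1)$, which is precisely the $k=1$ coefficient $s_D(2n,1)$. By the sign convention $s_D(2n,1) = -\,|\{\sigma \in \mathcal D_{2n} : \text{exactly one cycle}\}|$, and for $2n \ge 2$ a single-cycle D-permutation on $[2n]$ is exactly a D-cycle, so $s_D(2n,1) = -|\mathcal{DC}_{2n}|$. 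Alternatively, I would read this off directly from Theorem~\ref{mobth} applied to $\pi = \hat 1$: there $|\pi| = 1$ and the D-permutations with cycle support $\hat 1$ are exactly the elements of $\mathcal{DC}_{2n}$, giving $-\mu(\hat 0,\hat 1) = |\mathcal{DC}_{2n}|$.

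I expect no substantive obstacle, since both statements are routine evaluations of (\ref{intorchardumeq}). The only points demanding care are the sign bookkeeping in the first line, namely that $(-1)^{k-1}(-1)^k = -1$ for every $k$ so that the contributions add rather than cancel, and in the second line the two small identifications that evaluating at $t=0$ selects $\mu(\hat 0, \hat 1)$ and that the one-cycle D-permutations on $[2n]$ are precisely the D-cycles counted by $\mathcal{DC}_{2n}$.
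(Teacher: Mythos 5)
Your proposal is correct and matches the paper's (implicit) reasoning: the paper states this corollary without proof as an immediate consequence of the expansion $\chi_{\mathcal L(\mathcal H_{2n-1})}(t)=\sum_k s_D(2n,k)t^{k-1}$, obtained exactly as you do by evaluating at $t=-1$ (where the signs $(-1)^{k-1}(-1)^k=-1$ collapse) and at $t=0$ (where the constant term $s_D(2n,1)=-|\mathcal{DC}_{2n}|$ equals $\mu(\hat 0,\hat 1)$ since $\Gamma_{2n}$ is connected).
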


Note that it follows from Corollary~\ref{mucor} that Part (2) of  Theorem~\ref{genth} is equivalent to Hetyei's formula (\ref{heteq2}).  So Part (2) is now proved.
  In the next section we will prove Part (2) without relying on (\ref{heteq2}), thereby recovering Hetyei's result.

Note also that Theorem~\ref{genIDth} is now proved since it follows immediately from  Theorems~\ref{genth}(1)  and~\ref{IncDecBijection}.

\subsection{Alternative  formulas} \label{altsec} We now give two alternative ways of expressing the characteristic polynomial in terms of D-permutations (equivalently  ID forests), which  will be used in Section~\ref{genfnsec}.  As a consequence of these formulas, we get a decomposition of the Genocchi numbers and the median Genocchi numbers into a sum of powers of $2$.  

\begin{theorem} \label{chiDlem} For all $n \ge 1$,
\begin{equation} \label{char1eq} \chi_{\Pi_{\Gamma_{2n}}}(t) = (t-1) \sum_{\sigma\in\mathcal{D}_{2n-2}}(-t)^{\#\{\text{\rm{even fixed points of }}\sigma\}}(1-t)^{\#\{\text{\rm {other cycles of }}\sigma\}}.\end{equation}
\end{theorem}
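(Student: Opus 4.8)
The plan is to prove \eqref{char1eq} by reorganizing the sum over D-permutations on $[2n]$ from Theorem~\ref{mobth} according to what happens at the largest element $2n$, thereby exhibiting the factor $(t-1)$ and reducing the problem to D-permutations on $[2n-2]$. By Theorem~\ref{mobth} we have
\begin{equation*}
\chi_{\Pi_{\Gamma_{2n}}}(t) = \sum_{\sigma \in \mathcal{D}_{2n}} (-1)^{\#\text{cyc}(\sigma)} t^{\#\text{cyc}(\sigma)-1},
\end{equation*}
where $\#\text{cyc}(\sigma)$ denotes the number of cycles of $\sigma$, since grouping $\sum_k s_D(2n,k)t^{k-1}$ with $(-1)^k s_D(2n,k) = |\{\sigma : \#\text{cyc}(\sigma)=k\}|$ gives exactly this. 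The first step is to rewrite each summand as $-t^{-1}(-t)^{\#\text{cyc}(\sigma)}$ and to note that I should split cycles into \emph{even fixed points} (contributing a factor to be tracked separately) and \emph{other cycles}; the target right-hand side suggests a weight of $(-t)$ per even fixed point and $(1-t)$ per other cycle.

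The key structural idea is a two-to-one style analysis of the relationship between $\mathcal{D}_{2n}$ and $\mathcal{D}_{2n-2}$ obtained by deleting the two largest elements $2n-1$ (odd) and $2n$ (even). First I would observe that in any D-permutation $\sigma$ on $[2n]$, the odd element $2n-1$ satisfies $2n-1 \le \sigma(2n-1)$, so $\sigma(2n-1) \in \{2n-1, 2n\}$, and the even element $2n$ satisfies $\sigma(2n) \le 2n$. I would then classify $\sigma \in \mathcal{D}_{2n}$ according to how $2n-1$ and $2n$ sit in its cycle structure and describe, in each case, the D-permutation $\sigma'$ on $[2n-2]$ obtained by removing these two points (splicing them out of their cycles). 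The cases are: (i) both $2n-1$ and $2n$ are fixed points; (ii) $2n$ is in a $2$-cycle or longer cycle with various predecessors/successors while $2n-1$ is fixed or not; and so on. The goal is to show that the generating weight on the left, after summing over all preimages of a given $\sigma'$, factors as $(t-1)$ times the displayed weight of $\sigma'$, with even fixed points of $\sigma'$ acquiring weight $-t$ and all other cycles weight $1-t$.

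The main obstacle will be the bookkeeping in this case analysis: one must check that for each $\sigma' \in \mathcal{D}_{2n-2}$ the fibers under the deletion map, weighted by $(-t)^{\#\text{cyc}-1}$ (or the appropriate sign-adjusted cycle weight), sum exactly to $(t-1)$ times $(-t)^{\#\{\text{even fixed pts of }\sigma'\}}(1-t)^{\#\{\text{other cycles of }\sigma'\}}$, and in particular that the number of ways to re-insert $2n-1,2n$ so as to land back in $\mathcal{D}_{2n}$ produces the correct cycle-count shifts. The subtle point is reconciling the change in cycle count when $2n-1,2n$ are inserted into an existing cycle versus forming new (possibly fixed-point) cycles, and matching the two distinct weight factors $-t$ and $1-t$ to the even-fixed-point versus general-cycle dichotomy; the algebraic identity $(-t) + \text{(other contributions)} = (t-1)\cdot(\text{weight})$ per fiber is where the factor $(t-1)$ emerges, and verifying it uniformly across all cases is the crux. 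Once the fiber computation is established, summing over $\sigma' \in \mathcal{D}_{2n-2}$ yields \eqref{char1eq} directly.
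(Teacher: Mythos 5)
Your starting point is fine: Theorem~\ref{mobth} does give $\chi_{\Pi_{\Gamma_{2n}}}(t)=-\sum_{\sigma\in\mathcal{D}_{2n}}(-t)^{c(\sigma)-1}$ (where $c(\sigma)$ is the number of cycles), and the overall strategy of stripping off the top elements and summing weighted fibers over $\mathcal{D}_{2n-2}$ is exactly the shape of the paper's argument, carried out there on the ID-forest side of the bijection of Theorem~\ref{IncDecBijection}. The gap is in your deletion map: splicing $2n-1$ and $2n$ out of their cycles does \emph{not} send $\mathcal{D}_{2n}$ into $\mathcal{D}_{2n-2}$. For example $\sigma=(1,3,6,2)(4)(5)\in\mathcal{D}_{6}$, but splicing out $5$ and $6$ yields $(1,3,2)(4)$, which violates the D-condition at the odd letter $3$ (it is sent to $2<3$). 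Nor can these escaping permutations be waved away: for $n=3$ the total weight $-\sum(-t)^{c(\sigma)-1}$ carried by the eight elements of $\mathcal{D}_{6}$ whose splice lies outside $\mathcal{D}_{4}$ is $(t-1)^{3}\neq 0$. Consequently the fibers over genuine elements of $\mathcal{D}_{2n-2}$ cannot sum to the whole left-hand side, and the per-fiber identity you plan to verify must fail for some $\sigma'$; equivalently, if you instead parametrize by re-inserting $2n-1,2n$ into elements of $\mathcal{D}_{2n-2}$, you miss exactly those D-permutations on $[2n]$ that do not arise this way.

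The repair is to use a different removal operation, which is most transparent on the forest side. Under $\psi$, the cycle of $\sigma$ containing $2n$ corresponds to an ID tree rooted at $2n$; deleting that root leaves the subtrees rooted at its children, each again an ID tree. In permutation language, deleting $2n$ should \emph{break} its cycle into several D-cycles (cutting the word $\pw(\hat T)'$ at its right-to-left minima, cf.\ Lemma~\ref{lrmlem}), not splice it into a single cycle. With that operation the image lands in the ID forests on $[2n-1]$; each tree of the image can be re-attached to $2n$ in at most one way (at its smallest node, impossible precisely for an even isolated node), so the fiber weight factors as $(-t)^{\#\{\text{even isolated nodes}\}}(1-t)^{\#\{\text{other trees}\}}$, and the node $2n-1$ is forced to be isolated, supplying the remaining factor of $(1-t)$. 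This is Theorem~\ref{doubth} combined with Corollary~\ref{IncDecBijectionCor}, i.e., the paper's proof; staying purely on the permutation side would require you to define and analyze this cycle-breaking operation directly, which amounts to re-deriving that correspondence.
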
 

\begin{proof}

Let $\idf_V$ be the set of ID forests on node set $V$. For $V=[2n]$, Theorem~\ref{doubth}   can be restated as

$$\chi_{\Pi_{\Gamma_{2n}}}  (t) = - \sum_{F \in \idf_{[2n]}} (-t)^{m(F)-1}  ,$$
where  $m(F)$ denotes the number of trees of the forest $F$.
For each $F \in \idf_{[2n]}$, let $F^\prime$ be the forest obtained by removing node $[2n]$ from $F$.  Clearly $F^\prime  \in \idf_{[2n-1]}$. 
We thus have $$\chi_{\Pi_{\Gamma_{2n}}}  (t) = -\sum_{G \in \idf_{[2n-1]}} \sum_{\scriptsize \begin{array}{c} F \in \idf_{[2n]} \\ F^\prime = G \end{array}} (-t)^{m(F)-1}  .$$

Fix $G \in \idf_{[2n-1]}$ and let $T$ be a tree of the forest $G$.   
For all $F \in \idf_{[2n] }$ for which $F^\prime = G$,  the tree $T$ is either a tree of the forest $F$  or it is attached  to $2n$ in $F$.  Hence to obtain a forest  $F \in \idf_{[2n] }$ for which $F^\prime = G$,  we have the option of not attaching $T$ to $2n$ or attaching $T$ at its smallest node, which can  be done if and only if $T$ is not an even single node.  Hence  if $T$ is not an even single node then it contributes a factor of $1-t$ to the inner sum, and if $T$ is an  even single node then  it contributes a factor of $-t$ to the inner sum.  
It follows that 
$$ \sum_{\scriptsize \begin{array}{c} F \in \idf_{[2n]} \\ F^\prime = G \end{array}} (-t)^{m(F)-1} =  (-t)^{\#\{\text{even isolated nodes of }G\}}(1-t)^{\#\{\text{other trees of }G\}}.$$ We now have  

\begin{align*}\chi_{\Pi_{\Gamma_{2n}}}  (t) &=  -\sum_{G \in \idf_{[2n-1]}}  (-t)^{\#\{\text{even isolated nodes of }G\}}(1-t)^{\#\{\text{other trees of }G\}}\\
&= -(1-t)  \sum_{G \in \idf_{[2n-2]}}  (-t)^{\#\{\text{even isolated nodes of }G\}}(1-t)^{\#\{\text{other trees of }G\}},  
\end{align*}
with the second equality following from the fact that  every $ G \in \idf_{[2n-1]}$ consists of the isolated node $2n-1$ and an ID forest on $[2n-2]$.
The result now follows from  Corollary~\ref{IncDecBijectionCor}.
\end{proof}

\begin{theorem} \label{QuotCharLem} For all $n \ge 2$,
\begin{equation} \label {char2eq}   \chi_{\Pi_{\Gamma_{2n}}}(t)  = (t-1)^3 \sum_{\sigma\in\mathcal{D}_{2n-4}}(1-t)^{\#\{\text{fixed points of }\sigma\}}(2-t)^{\#\{\text{other cycles of }\sigma\}}.\end{equation}
\end{theorem}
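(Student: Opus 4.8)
The plan is to continue the peeling argument used to prove Theorem~\ref{chiDlem} for two further steps, stripping off the nodes $2n-2$ and $2n-3$. Recall that the proof of Theorem~\ref{chiDlem}, after removing the top two nodes $2n$ and $2n-1$, passes through the intermediate identity
\[
\chi_{\Pi_{\Gamma_{2n}}}(t) = -(1-t)\sum_{G\in\idf_{[2n-2]}}(-t)^{a(G)}(1-t)^{b(G)},
\]
where $a(G)$ counts the even isolated nodes of the ID forest $G$ and $b(G)$ counts its remaining (``other'') trees; the factor $-(1-t)$ records the removal of the forced isolated odd node $2n-1$. The idea is to iterate this removal operation on the largest remaining node twice more and then translate the final sum over $\idf_{[2n-4]}$ into D-permutations on $[2n-4]$ via Corollary~\ref{IncDecBijectionCor}.

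The first new step peels off the largest node $2n-2$, which is even. Reconstructing $G\in\idf_{[2n-2]}$ from $H:=G\setminus\{2n-2\}\in\idf_{[2n-3]}$ amounts to choosing a subset $S$ of the odd-rooted (that is, non-even-single-node) trees of $H$ to attach to $2n-2$; when $S=\emptyset$ the node $2n-2$ is even isolated (weight $-t$) and otherwise it roots an ``other'' tree (weight $1-t$). Summing the char1-type weight over all $S$, using $\sum_{\emptyset\neq S}(1-t)^{p-|S|}=(2-t)^p-(1-t)^p$ together with the $S=\emptyset$ term, gives a per-forest contribution of $(-t)^{a(H)}\bigl[(1-t)(2-t)^{p}-(1-t)^{p}\bigr]$, where $p=b(H)$ is the number of attachable trees. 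The second new step peels off $2n-3$, which, being the largest remaining node and odd, is forced to be an isolated (hence ``other'') tree; this writes $H=\{2n-3\}\sqcup G''$ with $G''\in\idf_{[2n-4]}$, contributes another factor $(1-t)$, and reduces everything to a sum over $\idf_{[2n-4]}$. In all, the three factors of $t-1$ in the statement arise from the two forced isolated odd nodes $2n-1,\,2n-3$ and from the conversion of the even-isolated weights carried out below. Finally, Corollary~\ref{IncDecBijectionCor} turns isolated nodes into fixed points and nontrivial trees into other cycles, producing the weight $(1-t)^{\#\{\text{fixed points}\}}(2-t)^{\#\{\text{other cycles}\}}$ over $\mathcal D_{2n-4}$.

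The hard part will be the weight bookkeeping in the last reduction: the weighting on $\idf_{[2n-4]}$ produced by the peeling is \emph{not} termwise equal to the target, since even isolated nodes come out carrying $-t$ rather than $1-t$, and each other tree contributes the difference $(1-t)(2-t)^{p}-(1-t)^p$ rather than a clean product. The crux is therefore to show that the two weightings have the same total sum over $\idf_{[2n-4]}$. I expect to resolve this by a pairing (sign-reversing involution) that matches each forest possessing a prescribed even isolated node with the forest in which that node is instead grouped into a neighboring tree, so that the contributions combine as $(-t)+1=1-t$; this is exactly the mechanism already visible in the case $n=3$, where the two ID forests on $\{1,2\}$ recombine to replace the even-isolated weight $-t$ by $1-t$ and yield $(1-t)^2U_2$. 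Verifying that such a pairing is well defined for every even isolated node, and that it interacts correctly with the $(2-t)$-powers, is the main obstacle; an induction on $n$ built on Theorem~\ref{chiDlem} is the natural fallback should a direct involution prove awkward.
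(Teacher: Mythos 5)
Your two peeling steps are set up correctly: removing $2n-2$ (choosing a subset $S$ of the non-even-singleton trees of $H$ to attach at their smallest nodes) does yield the per-forest contribution $(-t)^{a(H)}\bigl[(1-t)(2-t)^{p}-(1-t)^{p}\bigr]$ with $p=b(H)$, and $2n-3$ is indeed forced to be an isolated odd node. But the proof stops exactly where the real work begins. Writing $a,c,o$ for the numbers of even isolated nodes, odd isolated nodes, and nonsingleton trees of $G\in\idf_{[2n-4]}$ (so $b=c+o$), what you are left needing is the non-termwise identity
\begin{equation*}
(2-t)\sum_{G}(-t)^{a}(2-t)^{b}\;-\;\sum_{G}(-t)^{a}(1-t)^{b}\;=\;(1-t)\sum_{G}(1-t)^{a+c}(2-t)^{o},
\end{equation*}
and you offer only a hope that a sign-reversing involution will establish it. As described, that involution cannot work locally: merging an even isolated node into a neighboring tree is not a well-defined operation (there may be several admissible neighbors, or none), and more importantly the exponent of $(2-t)$ on the left is $b=c+o$ while on the right it is $o$, so any pairing that "recombines $(-t)+1=1-t$" must simultaneously account for a systematic drop in the $(2-t)$-power coming from the odd isolated nodes. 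You verify this only for $n=3$, and your stated fallback (induction on $n$) is likewise not carried out. This is a genuine gap, not a routine detail.

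For comparison, the paper avoids this entirely by peeling different nodes. It first removes $2$ and then $2n-1$ (each is forced to be either isolated or attached in a unique way, giving two clean factors of $1-t$), then observes via Theorem~\ref{doubth} that restricting to forests in which $1$ and $2n$ lie in distinct trees costs a third factor of $1-t$, and finally deletes $1$ and $2n$ \emph{together}. The payoff is that the weights become termwise: each isolated node of the remaining forest on $\{3,\dots,2n-2\}$ can be attached to exactly one of $\{1,2n\}$ (depending on parity), giving $1-t$, and each nonsingleton tree can be attached in exactly two ways (largest node to $1$, or smallest node to $2n$), giving $2-t$ directly. If you want to salvage your top-down route, you would need to prove the displayed identity; the cleanest repair is probably to abandon the last recombination step and instead adopt the paper's choice of which four nodes to strip.
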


\begin{proof} 
 Each $F \in \idf_{ [2n]}$ can be obtained from a forest in  $\idf_{ [2n] \setminus \{2\}}$ either by adding an isolated node $2$ or by attaching the node $2$ to $1$.   This implies that 
\begin{equation}\label{factor1eq} \chi_{\Pi_{\Gamma_{[2n]}}}= (1-t) \chi_{\Pi_{\Gamma_{[2n]\setminus\{2\}}}}.\end{equation}
Similarly, \begin{equation} \label{factor2eq}  \chi_{\Pi_{\Gamma_{[2n]\setminus\{2\}}}} = (1-t)\chi_{\Pi_{\Gamma_{[2n]\setminus\{2,2n-1\}}}}.\end{equation}

Now let $\overline{ \idf}_{ [2n] \setminus \{2,2n-1\}}$ be the subset of  $\idf_{ [2n] \setminus \{2,2n-1\}}$ consisting of forests in which the nodes $1$ and $2n$ are in distinct trees.  Since each forest in $\idf_{ [2n] \setminus \{2,2n-1\}}$ is either a forest in  $\overline{\idf}_{ [2n] \setminus \{2,2n-1\}}$ or is obtained from one in $\overline{\idf}_{ [2n] \setminus \{2,2n-1\}}$ by adding an edge between the nodes $1$ and $2n$, we have by Theorem~\ref{doubth} that
\begin{equation} \label{factor3eq} \chi_{\Pi_{\Gamma_{[2n]\setminus \{2,2n-2\}}}} (t) = -(1-t)  \sum_{F \in \overline{ \idf}_{[2n]\setminus \{2,2n-2\}}} (-t)^{m(F)-1} .\end{equation}

For each $F \in \overline{\idf}_{ [2n] \setminus \{2,2n-1\}}$, let $\hat F$ be the forest obtained by removing the nodes $1$ and $2n$ from $F$.  Clearly $\hat F  \in \idf_{\{3,\dots,2n-2\}}$. 
It therefore follows from (\ref{factor1eq}), (\ref{factor2eq}), and (\ref{factor3eq}) that  
\begin{align} \label{factor4eq} \chi_{\Pi_{\Gamma_{[2n]}}}(t) &= (t-1)^3 \sum_{F \in \overline{\idf}_{ [2n] \setminus \{2,2n-1\}}} (-t)^{m(F)-1} 
\\ \nonumber &= (t-1)^3 \sum_{G \in \idf_{\{3,\dots,2n-2\}}} \sum_{\scriptsize \begin{array}{c} F \in  \overline{\idf}_{ [2n] \setminus \{2,2n-1\}} \\ \hat F = G \end{array}} (-t)^{m(F)-1}  .\end{align}

Fix $G \in \idf_{\{3,\dots,2n-2\}}$ and let $T$ be a tree of the forest $G$.   For all $F \in  \overline{\idf}_{ [2n] \setminus \{2,2n-1\}}$ for which $\hat F = G$,  the tree $T$ is either a tree of the forest $F$  or it is attached  to either $1$ or $2n$ in $F$.  If $T$ is a single node it can  be attached to only one of the nodes in $\{1,2n\}$ depending on its parity.  Hence singleton trees of $G$ contribute a factor of $1-t$ to the inner sum.  If $T$ has more than one node then there are exactly two ways to attach it;  with an edge between its largest node and $1$,  or  with an edge between its smallest  node and $2n$.  Thus, nonsingleton trees of $G$ contribute a factor $2-t$ to the inner sum.  
It follows that 
$$ \sum_{\scriptsize \begin{array}{c} F \in  \overline{\idf}_{ [2n] \setminus \{2,2n-1\}} \\ \hat F = G \end{array}} (-t)^{m(F)-1}  = (1-t)^{\#\{\text{isolated nodes of }G\}}(2-t)^{\#\{\text{other trees of }G\}}.$$

We now have   
\begin{align*} \chi_{\Pi_{\Gamma_{2n}}}(t)  &= (t-1)^3 \sum_{G \in \idf_{\{3,\dots,2n-2\}}}  (1-t)^{\#\{\text{isolated nodes of }G\}}(2-t)^{\#\{\text{other trees of }G\}} \\
&=  (t-1)^3\sum_{G \in \idf_{\{1,\dots,2n-4\}}}  (1-t)^{\#\{\text{isolated nodes of }G\}}(2-t)^{\#\{\text{other trees of }G\}} \\
&= (t-1)^3 \sum_{\sigma\in\mathcal{D}_{2n-4}}(1-t)^{\#\{\text{fixed points of }\sigma\}}(2-t)^{\#\{\text{other cycles of }\sigma\}},\end{align*}
with the last equality following from (\ref{IncDecBijectionCor}).  \end{proof}

Note that when $t$ is set equal to $0$, equation (\ref{char1eq}) reduces to (\ref{geneq}).  Indeed, the only  terms to survive on right hand side of (\ref{char1eq}) are the ones corresponding to D-permutations with no even fixed points.  But these are the Dumont permutations, which were used in Section~\ref{gensec}  to  define the Genocchi numbers.  

In the following corollary, we obtain the decomposition of the median Genocchi numbers by setting $t=-1$ in (\ref{char1eq}) and using (\ref{heteq2}). We obtain the decomposition of the Genocchi numbers by setting $t=0$ in (\ref{char2eq}) and using (\ref{geneq}).

\begin{corollary}  For all $n \ge 2$, 
\begin{align*} h_n &= \sum_{j = 1}^{n-1} h_{n-1, j} 2^{j+1} \\
g_{n+1} &= \sum_{j = 0}^{n-1} g_{n-1, j} 2^{j},
\end{align*}
where $h_{n,j}$ is the number of D-permutations on $[2n]$ with exactly $j$ cycles that are not even fixed points and $g_{n,j}$ is the number of D-permutations on $[2n]$ with exactly $j$ cycles that are not  fixed points.
\end{corollary}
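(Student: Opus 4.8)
The plan is to derive both identities by substituting the special values $t=-1$ and $t=0$ into the two expansions established in Theorems~\ref{chiDlem} and~\ref{QuotCharLem}, after which the work reduces to evaluating the products appearing in (\ref{char1eq}) and (\ref{char2eq}) and then collecting terms according to the number of cycles. Throughout I would use the isomorphism $\mathcal{L}(\mathcal H_{2n-1}) \cong \Pi_{\Gamma_{2n}}$ of Theorem~\ref{bondth} to pass freely between $\chi_{\Pi_{\Gamma_{2n}}}$ and $\chi_{\mathcal L(\mathcal H_{2n-1})}$.

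For the median Genocchi identity, I would set $t=-1$ in (\ref{char1eq}). The three factors become $t-1 = -2$, $-t = 1$, and $1-t = 2$, so the factor $(-t)^{\#\{\text{even fixed points}\}}$ drops out and the right-hand side collapses to
$$\chi_{\Pi_{\Gamma_{2n}}}(-1) = -2\sum_{\sigma \in \mathcal{D}_{2n-2}} 2^{\#\{\text{cycles of }\sigma\text{ that are not even fixed points}\}}.$$
Combining this with Hetyei's formula (\ref{heteq2}), which gives $\chi_{\Pi_{\Gamma_{2n}}}(-1) = -h_n$, yields $h_n = 2\sum_{\sigma} 2^{\#\{\dots\}}$. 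Grouping the D-permutations $\sigma$ on $[2n-2]$ by the number $j$ of cycles that are not even fixed points — which is exactly the statistic counted by $h_{n-1,j}$ — produces $h_n = \sum_j h_{n-1,j}\,2^{j+1}$.

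The Genocchi identity is entirely analogous, setting $t=0$ in (\ref{char2eq}). Here $t-1 = -1$, $1-t = 1$, and $2-t = 2$, so the right-hand side becomes $-\sum_{\sigma \in \mathcal{D}_{2n-4}} 2^{\#\{\text{non-fixed cycles}\}}$. Since the constant term of a characteristic polynomial is its M\"obius invariant $\mu(\hat 0,\hat 1)$, the isomorphism of Theorem~\ref{bondth} together with (\ref{geneq}) gives $\chi_{\Pi_{\Gamma_{2n}}}(0) = -g_n$, whence $g_n = \sum_{\sigma} 2^{\#\{\text{non-fixed cycles}\}}$. Collecting the D-permutations on $[2n-4]$ by their number $j$ of non-fixed cycles and then reindexing $n \mapsto n+1$ gives $g_{n+1} = \sum_j g_{n-1,j}\,2^j$.

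The only genuine care is in pinning down the ranges of summation, and I expect this mild bookkeeping to be the single nontrivial point. For the $h$-identity I would note that any cycle of a D-permutation consisting solely of even elements must be a fixed point: on even $i$ one has $\sigma(i)\le i$, which forces such a cycle $(e_1,\dots,e_k)$ to satisfy $e_1\ge e_2\ge\cdots\ge e_k\ge e_1$ and hence $k=1$. Thus every non-even-fixed-point cycle contains one of the $n-1$ odd elements of $[2n-2]$, bounding $j$ above by $n-1$; the lower bound $j\ge 1$ holds because the cycle through $1$ is never an even fixed point. For the $g$-identity, after reindexing the relevant permutations lie in $\mathcal{D}_{2n-2}$, each of whose non-fixed cycles has length at least $2$, so $j$ ranges from $0$ (the identity) to $n-1$, matching the stated sums.
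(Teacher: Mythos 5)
Your proposal is correct and follows exactly the paper's route: the paper likewise obtains the first identity by setting $t=-1$ in (\ref{char1eq}) and invoking Hetyei's formula (\ref{heteq2}), and the second by setting $t=0$ in (\ref{char2eq}) and invoking (\ref{geneq}). Your extra remarks pinning down the summation ranges (all-even cycles of a D-permutation are necessarily fixed points, etc.) are sound bookkeeping that the paper leaves implicit.
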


We expect that the coefficients $g_{n-1,j}$ are equal to the coefficients in   Sundaram's  decomposition \cite[Proposition 3.5]{sund} of the Genocchi numbers into powers of 2.

\subsection{Reducing the characteristic polynomial} \label{redsec}
From (\ref{char2eq}) we can see that $ \chi_{\Pi_{\Gamma_{2n}}}(t) $ is divisible by $(t-1)^3$.  In this subsection we give combinatorial and geometric interpretations of the polynomial $ (t-1)^{-3} \chi_{\Pi_{\Gamma_{2n}}}(t) $.

    Let $L_{2n}$ be the geometric semilattice (see \cite{geometric_semilattices}) obtained from $\Pi_{\Gamma_{[2n]\setminus \{2,2n-2\}}}$ by removing the order filter generated by  the atom  of $\Pi_{\Gamma_{[2n]\setminus \{2,2n-2\}}}$ whose only nonsingleton block is $\{1,2n\}$. That is,
  $$ L_{2n} = \{ \pi \in \Pi_{\Gamma_{[2n]\setminus \{2,2n-2\}}} :  1 \mbox { and } 2n \mbox{ are in distinct blocks of  }\pi\}.$$  
  Geometrically, $L_{2n}$ is the intersection semilattice of the affine hyperplane arrangement obtained by deconing the graphic arrangement of $\Gamma_{[2n]\setminus \{2,2n-2\}}$.  Indeed,  let $\mathcal P_{2n}$ be the affine arrangement obtained by intersecting the graphical arrangement  of the graph $\Gamma_{[2n]\setminus\{2,2n-1\} }$ with the affine hyperplane  $x_1 = x_{2n}+1$.  It is easy to see that the intersection semilattice  $\mathcal L(\mathcal P_{2n})$ of $\mathcal P_{2n}$ is $L_{2n}$.

  From (\ref{factor4eq}),
we can see that
 \begin{equation} \label{chiLeq} \chi_{L_{2n} }(t)= (t-1)^{-3} \chi_{\Pi_{\Gamma_{2n}}}(t).\end{equation}
  By dividing both sides of (\ref{char2eq}) by $(t-1)^3$ and then setting $t=1$, the median Genocchi numbers appear  again.

\begin{corollary} \label{quotcor} For all $n \ge 3$, 
\begin{align} \label{quot0eq}  \chi_{L_{2n}}(0) &= g_{n }
\\ \label{quot1eq}
 \chi_{L_{2n}}(1) & = h_{n-3}
  \end{align}
 Consequently, the number of bounded regions of $\mathcal P_{2n}$ is $h_{n-3}$.
\end{corollary}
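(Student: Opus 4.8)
The plan is to derive both evaluations directly from the factorization identity (\ref{char2eq}) together with the relation (\ref{chiLeq}), and then to invoke Zaslavsky's theory for the geometric interpretation. From (\ref{chiLeq}) we have $\chi_{L_{2n}}(t) = (t-1)^{-3}\chi_{\Pi_{\Gamma_{2n}}}(t)$, so the strategy is simply to substitute the right-hand side of (\ref{char2eq}) and then specialize $t$. Writing $\chi_{\Pi_{\Gamma_{2n}}}(t) = (t-1)^3 \sum_{\sigma \in \mathcal{D}_{2n-4}} (1-t)^{\#\{\text{fixed points}\}}(2-t)^{\#\{\text{other cycles}\}}$, dividing by $(t-1)^3$ cancels the prefactor cleanly and yields
$$\chi_{L_{2n}}(t) = \sum_{\sigma \in \mathcal{D}_{2n-4}}(1-t)^{\#\{\text{fixed points of }\sigma\}}(2-t)^{\#\{\text{other cycles of }\sigma\}}.$$

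For (\ref{quot0eq}), I would set $t = 0$. The factors become $(1-0) = 1$ and $(2-0) = 2$, so the summand for each $\sigma$ is $2^{\#\{\text{other cycles}\}}$; but rather than reorganizing this sum directly, the cleaner route the authors signal is to instead set $t = 0$ in (\ref{char2eq}) itself and compare with (\ref{geneq}). Since $\mu_{\mathcal L(\mathcal H_{2n-1})}(\hat 0, \hat 1) = \chi_{\Pi_{\Gamma_{2n}}}(0) = -g_n$ by (\ref{geneq}) and Theorem~\ref{bondth}, and since $\chi_{\Pi_{\Gamma_{2n}}}(0) = (0-1)^3 \chi_{L_{2n}}(0) = -\chi_{L_{2n}}(0)$ by (\ref{chiLeq}), we get $-\chi_{L_{2n}}(0) = -g_n$, hence $\chi_{L_{2n}}(0) = g_n$. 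For (\ref{quot1eq}), I would set $t = 1$ directly in the displayed expression for $\chi_{L_{2n}}(t)$: the factor $(1-t)^{\#\{\text{fixed points}\}}$ vanishes unless $\sigma$ has no fixed points, and $(2-t)^{\#\{\text{other cycles}\}} = 1^{\#\{\text{other cycles}\}} = 1$, so only fixed-point-free D-permutations survive, each contributing $1$. Thus $\chi_{L_{2n}}(1) = |\{\sigma \in \mathcal{D}_{2n-4} : \sigma \text{ has no fixed points}\}|$, which I would then identify with $h_{n-3}$ using Theorem~\ref{genth}(2) (applied with $2n-4 = 2(n-2)$ giving $h_{n-2}$)—so the indexing needs care, and reconciling the subscript with the claimed $h_{n-3}$ is the one spot demanding attention; it should follow from the standard relation between D-permutations and Dumont derangements noted after the definition of $\mathcal{DC}_A$, namely that the fixed-point-free elements of $\mathcal{D}_{2m}$ are counted by a median Genocchi number with the appropriate shift.

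For the final geometric assertion, I would recall that $L_{2n} = \mathcal L(\mathcal P_{2n})$ is the intersection semilattice of the affine arrangement $\mathcal P_{2n}$, as established just before the statement. The number of bounded regions of a real affine arrangement $\mathcal A$ is given by the companion to Zaslavsky's formula (\ref{zaseq}), namely $b(\mathcal A) = (-1)^{\dim} \chi_{\mathcal L(\mathcal A)}(1)$ up to sign; evaluating the characteristic polynomial at $t=1$ therefore delivers the bounded-region count directly. Combining this with (\ref{quot1eq}) gives that the number of bounded regions of $\mathcal P_{2n}$ equals $h_{n-3}$.

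The main obstacle I anticipate is purely bookkeeping rather than conceptual: getting the subscript on the median Genocchi number exactly right. The substitutions at $t=0$ and $t=1$ are immediate, but Theorem~\ref{genth}(2) reads $|\mathcal{D}_{2n}| = h_n$, whereas here the surviving permutations lie in $\mathcal{D}_{2n-4}$ and are additionally constrained to be fixed-point-free, so I must track both the shift $2n-4 = 2(n-2)$ and the correspondence between fixed-point-free D-permutations on $[2m]$ and the median Genocchi number $h_{m-1}$ (the latter being the content of the chain of inclusions relating $\mathcal{DC}$, Dumont derangements, and $\mathcal{D}$ discussed in Section~\ref{Dsec}). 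Verifying that these two shifts compose to yield precisely $h_{n-3}$, and confirming the orientation of Zaslavsky's bounded-region formula against the explicit table of values, is where I would spend the bulk of the care.
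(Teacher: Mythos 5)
Your proposal is correct and follows essentially the same route as the paper: evaluate at $t=0$ via (\ref{chiLeq}) and Corollary~\ref{gencor}, evaluate at $t=1$ via Theorem~\ref{QuotCharLem} to isolate the fixed-point-free D-permutations on $[2n-4]$, and finish with Zaslavsky's bounded-region formula. The indexing worry you flag resolves exactly as you suspect: fixed-point-free D-permutations on $[2m]$ are by definition the Dumont derangements on $[2m]$, counted by $h_{m-1}$, and $m=n-2$ gives $h_{n-3}$ (Theorem~\ref{genth}(2) is not the right tool here since it counts all of $\mathcal{D}_{2m}$, not the derangements).
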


\begin{proof}  Equation~(\ref{quot0eq}) is an immediate consequence of (\ref{chiLeq}) and Corollary~\ref{gencor}.

It follows from Theorem~\ref{QuotCharLem} that  $   \chi_{L_{2n}}(1)$  equals the number of D-permutations on $[2n-4]$ with no fixed points.  
But these are precisely the Dumont derangements on $[2n-4]$, which by the definition of the median Genocchi numbers  given in Section~\ref{gensec}, is equal to $h_{n-3}$.  Hence (\ref{quot1eq}) holds.

The consequence follows from  another well known result of Zaslavsky  \cite{Facing_Up_Arrangements}, namely that the number of bounded regions of any affine arrangement $\mathcal A$ is equal to $|\chi_{L(\mathcal A)}(1)|$.
\end{proof}

\section{From D-permutations to surjective staircases} \label{DtoSsec}

In this section we prove   (\ref{introgenchareq}) along with a similar formula for the generating function of the characteristic polynomial of the homogenized Linial arrangement. The proofs rely on the theory of surjective staircases introduced by Dumont \cite{Interpretations_Combinatoires}. We first construct a bijection from the D-permutations on $[2n]$ to a certain class of surjective staircases known to be enumerated by $h_n$, thereby proving Theorem~\ref{genth} (2) and completing our path to Hetyei's formula 
(\ref{heteq2}).

\subsection{Surjective staircases}
An \emph{excedent function} is a map $f:[m] \to [m]$ such that for all $i \in [m]$, $f(i) \geq i$. It is convenient to visualize excedent functions by associating them    with fillings of the Ferrers diagram of shape $(m,m-1,\dots,1)$. We write the diagram in English notation with the row lengths  decreasing from top to bottom. 
The rows are labeled from top to bottom with the numbers $m$ down to $1$, and the columns are labeled from left to right with the numbers $1$ up to $m$.
 Let $C_{i,j}$ be the cell in the row labeled $i$ and the column labeled $j$. The tableau $T(f)$ corresponding to excedent function $f$ has an $X$ in cell $C_{i,j}$ whenever $f(j) = i$ and blanks in the other cells.  For example, the tableau in the figure below represents the excedent function $f:[4] \to [4]$ with $f(1) = 1, f(2) = 4, f(3) = 5, f(4) = 4, f(5) =5$.

$$  \begin{ytableau}
\none & \none[1] & \none[2] & \none[3] & \none[4] & \none[5] \\ 
\none[5] &\; & \; & X & \; & X \\
\none[4] & \; & X & \; & X \\
\none[3] & \; & \; & \;  \\
\none[2] & \; & \; \\
\none[1] & X
\end{ytableau} $$

Clearly this correspondence is a bijection between the set $\mathcal X_m$  of excedent functions $f: [m] \to [m]$ and   the set of tableaux $T$ of shape  $(m,m-1,\dots,1)$ filled with $X$'s and blanks so that each column has exactly one $X$.  The image of $f$ corresponds to the set of nonempty rows of $T(f)$, that is the rows with an $X$.  The {\em fixed points} of $f$ are the $j \in [m]$ for which $f(j) = j$.  Clearly the fixed points of $f$ correspond to the rows of $T(f)$ that have an $X$ in the rightmost cell of the row.   
An \emph{isolated fixed point} of $f$ is a fixed point $j$ of $f$ such that $f^{-1}(j) = \{j\}$.  Clearly the isolated fixed points correspond to the rows of $T(f)$ that have only one $X$, which is in the rightmost cell of the row.

It is not difficult to see  that $|\mathcal X_m| = m!$.  For our purposes, we will need a bijection from $\sg_m$ to $\mathcal X_m$  constructed in \cite{Derangements_Genocchi}.

\begin{proposition}[Dumont and Randrianarivony {\cite[Proposition 1.3]{Derangements_Genocchi}}] \label{DuRaProp}
There exists a bijection $\tau: \sg_m \to \mathcal X_m$ such that for all $\sigma \in \sg_m$ and $j \in [m]$, the following properties hold:
\begin{enumerate}

\item $j$ is a cycle maximum of $\sigma$ if and only if it is a fixed point of $\tau(\sigma)$,
\item $j$ is a fixed point of $\sigma$ if and only if it is an isolated fixed point of $\tau(\sigma)$,
\item  $\sigma(j) \leq j$ if and only if $j$ is in the image of $\tau(\sigma)$.

\end{enumerate}
\end{proposition}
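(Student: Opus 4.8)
The plan is to construct $\tau$ one cycle at a time, exploiting the fact that both sides carry the \emph{same} block structure. On the permutation side the blocks are the supports of the cycles. On the excedent side I would first record the following structural observation about an arbitrary $f \in \mathcal{X}_m$: since $f(j) \ge j$, iterating $f$ from any $j$ is non-decreasing and terminates at a fixed point, so the functional digraph of $f$ is a forest of trees, each rooted at a fixed point of $f$; moreover, because every element of a tree is $\le$ the fixed point it flows to, that root is the \emph{maximum} of its tree. Thus $[m]$ is partitioned into ``basins'' $C$, each with $\max C$ equal to its unique fixed point, with $f(C) \subseteq C$ and $f(x) > x$ for $x \in C \setminus \{\max C\}$. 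This is exactly the shape produced by a single cycle, so it suffices to give, for each finite $C \subset \Z_{>0}$, a bijection between cyclic orders on $C$ and excedent functions $g : C \to C$ whose only fixed point is $\max C$, compatible with properties (1)--(3).

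For the single-cycle bijection I would use the \emph{nearest-greater-to-the-left} encoding. Given a cycle on $C$ with maximum $M$, read it from $M$ as $x_0 = M, x_1 = \sigma(x_0), \dots, x_{k-1}$, and set $g(x_0) = x_0$ and, for $i \ge 1$, $g(x_i) = x_j$ where $x_j$ is the most recent earlier entry exceeding $x_i$ (this exists since $x_0 = M$ is largest). Then $g(x_i) > x_i$, so $g$ is an excedent function on $C$ whose only fixed point is $M$, giving property (1); and $M$ is an \emph{isolated} fixed point of $g$ exactly when $k = 1$, i.e.\ when $M$ is a fixed point of $\sigma$, giving property (2).

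The verification of property (3) is the pleasant core of the argument. For a cycle of length $\ge 2$ I would show that $\im g$ is precisely the set of descent tops $\{x_i : x_{i+1} < x_i\}$ of the cyclic reading, which coincides with the non-excedances $\{x \in C : \sigma(x) < x\}$ (the singleton cycle $\{M\}$ contributes the non-excedance $M$ directly). Indeed, if $x_i > x_{i+1}$ then $x_i$ is the nearest larger entry to the left of $x_{i+1}$, so $x_i \in \im g$; conversely, if $x_i < x_{i+1}$ then for every later, smaller entry the intervening value $x_{i+1}$ already exceeds it, so $x_i$ is never a nearest-greater-to-the-left and hence $x_i \notin \im g$. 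Summing the local images over the basins then yields $\im f = \{j : \sigma(j) \le j\}$ globally.

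The main obstacle is proving that the single-cycle encoding is genuinely a \emph{bijection}, i.e.\ that the cyclic order can be recovered from $g$. I would handle this by exhibiting the explicit inverse: the pointer function $g$ defines a tree on $C$ rooted at $M$ in which every node exceeds its children, and one checks that siblings are forced to occur in increasing order (a smaller sibling appearing after a larger one would steal the larger one as its nearest-greater-to-the-left). Hence the reading sequence $x_0, x_1, \dots$ is recovered as the preorder traversal visiting the children of each node in increasing order, and this reconstruction inverts the encoding. Since there are $(k-1)!$ cyclic orders on $C$ and also exactly $(k-1)!$ excedent functions $g : C \to C$ with unique fixed point $\max C$, a cardinality count confirms bijectivity once injectivity (equivalently, well-definedness of the reconstruction) is established. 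Assembling the per-basin bijections gives the global bijection $\tau : \sg_m \to \mathcal{X}_m$ with properties (1)--(3), the only remaining point being the routine check that the basin decomposition of $f$ matches the cycle decomposition of $\sigma$.
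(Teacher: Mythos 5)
Your argument is correct, but note that the paper does not actually prove this proposition: it is quoted verbatim from Dumont and Randrianarivony \cite[Proposition 1.3]{Derangements_Genocchi}, so there is no in-paper proof to compare against. What you supply is a self-contained construction, and it holds up. The basin decomposition of an excedent function (iterates of $f$ are non-decreasing, so each element flows to a unique fixed point, which is the maximum of its basin) correctly reduces the problem to matching cycles on a set $C$ with excedent functions on $C$ whose unique fixed point is $\max C$; both families have cardinality $(|C|-1)!$, so your injectivity argument via the explicit inverse suffices. The nearest-greater-to-the-left encoding does what you claim: the subtree of any node in the resulting pointer tree occupies a contiguous block of the reading sequence (the descendants of $x_i$ are exactly the $x_l$, $l>i$, with $\max(x_{i+1},\dots,x_l)<x_i$), siblings are forced into increasing order by exactly the ``stealing'' argument you give, and hence the sequence is recoverable as the increasing-children preorder. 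Properties (1) and (2) are immediate from the basin structure, and your identification of $\operatorname{im} g$ with the cyclic descent tops, i.e.\ the non-excedances of the cycle, correctly delivers property (3), including the boundary cases $x_0=M$ (always a non-excedance in a cycle of length $\ge 2$, and always hit by $g(x_1)$) and $x_{k-1}$ (always an excedance, never in the image). What your route buys is an explicit, verifiable bijection in place of an external citation; whether it coincides with the original Dumont--Randrianarivony map cannot be checked from this paper alone.
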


An excedent function $f \in \mathcal X_{2n}$ is said to be a {\em surjective staircase}\footnote{These are also known as surjective pistols in the literature.}  if its image is $\{2,4,\dots, 2n\}.$  Since the odd labeled rows in  the tableau $T(f)$ corresponding to a surjective staircase $f$ are empty, we can delete these rows from the tableau.  Let $T^\prime(f)$ denote the tableau obtained from $T(f)$ by deleting the odd labeled rows.  For example, let $f:[10] \to [10]$ be defined by
$$f(1) =2, f(2) = 8, f(3) = 4, f(4) = 4, f(5) = 10,$$
$$ f(6) = 6, f(7) = 10, f(8) = 8, f(9) = 10, f(10) = 10.$$ Then $T^\prime(f)$ is the tableau given in the figure below.

$$\begin{ytableau}
\none & \none[1] & \none[2] & \none[3] & \none[4] & \none[5] & \none[6] & \none[7] & \none[8] & \none[9] & \none[10]\\ 
\none[10] & \; & \; & \; & \; & X &\; & X & \; & X & X\\
\none[8] & \; & X & \; & \; & \; & \; & \; & X\\
\none[6] & \; & \; & \; & \; & \; & X\\
\none[4] &\; & \; & X & X\\
\none[2] & X & \;  
\end{ytableau}$$

Let $\mathcal E_{2n}$ be the set of surjective staircases on $2n$. For $f  \in \mathcal E_{2n}$, we say that $j$ is a {\em maximum} of $f$ if $j \in [2n-2]$ and $f(j) = 2n$, that is, if $f$   achieves its maximum value at $j$ and $j \ne 2n-1,2n$.  So $j\in [2n-2]$ is a maximum of  $f$ if, in $T^\prime(f)$, the top row    has an $X$ in column $j$ .  For the surjective staircase shown in the  figure above, the maxima  are $\{5,7\}$. 

\subsection{The bijection}

We use the Dumont-Randrianarivony bijection (Proposition~\ref{DuRaProp}) to prove the following result.

\begin{lemma}\label{bijectionProp}
There is a bijection $$\phi:  \mathcal{D}_{2n} \to  \{ f \in \mathcal{E}_{2n+2}: f \mbox { has  no even maxima} \}$$ such that for all $\sigma \in \mathcal{D}_{2n}$ and $j \in [2n]$, the following properties hold:
\begin{enumerate}
\item $j$ is an even cycle maximum of $\sigma$ if and only if it is a fixed point of $\phi(\sigma)$,
\item $j$ is an even fixed point of $\sigma$ if and only if it is an isolated fixed point of $\phi(\sigma)$,
\item  $j$ is an odd fixed point of $\sigma$ if and only if it is an odd maximum of $\phi(\sigma)$.
\end{enumerate} 
\end{lemma}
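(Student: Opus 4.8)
The plan is to realize $\phi$ as the composition of the Dumont--Randrianarivony bijection $\tau$ with a simple surgery on staircase tableaux. First I would apply $\tau$ to $\sigma$ to obtain $g = \tau(\sigma) \in \mathcal X_{2n}$ and translate the three properties of Proposition~\ref{DuRaProp} through the D-permutation structure. The key structural fact is that in a D-permutation every cycle maximum of a nontrivial cycle is even: if $M$ is the maximum of a nontrivial cycle then $\sigma(M) < M$, which forces $M$ to be even. Hence an \emph{odd} cycle maximum is necessarily an odd fixed point. Consequently, by properties (1)--(3) of $\tau$: the image of $g$ is $\{2,4,\dots,2n\}$ together with the odd fixed points of $\sigma$; the fixed points of $g$ are the even cycle maxima of $\sigma$ together with the odd fixed points; and the isolated fixed points of $g$ are exactly the (even and odd) fixed points of $\sigma$. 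In particular every nonempty odd row of $T(g)$ is a single diagonal $X$ corresponding to an odd fixed point.

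Next I would define the surgery. Embed $T(g)$ into the larger staircase shape of $\mathcal X_{2n+2}$, keeping row and column labels; move the single $X$ in each odd diagonal cell up into the top row $2n+2$; and adjoin the forced entries $f(2n+1) = f(2n+2) = 2n+2$. Call the result $f = \phi(\sigma)$. I would then check $f \in \mathcal E_{2n+2}$: the even rows $2,\dots,2n$ remain nonempty (they already were, since the evens lie in the image of $g$), the odd rows are now empty, and the new top row is nonempty, so the image of $f$ is exactly $\{2,4,\dots,2n+2\}$; moreover no even column ever has its $X$ moved to the top, so $f$ has no even maxima. The three desired properties then fall out of the bookkeeping: the fixed points of $f$ lying in $[2n]$ are precisely the undisturbed even fixed points of $g$, i.e.\ the even cycle maxima of $\sigma$ (property (1)); the isolated fixed points of $f$ are the even fixed points of $\sigma$ (property (2)); and the odd maxima of $f$ are exactly the relocated odd diagonal cells, i.e.\ the odd fixed points of $\sigma$ (property (3)).

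For bijectivity I would write down the inverse explicitly: given $f \in \mathcal E_{2n+2}$ with no even maxima, move each odd top-row $X$ (an odd maximum) back down to its diagonal cell, delete rows and columns $2n+1,2n+2$ to obtain $g' \in \mathcal X_{2n}$, and set $\sigma = \tau^{-1}(g')$. The content here is to verify $\sigma \in \mathcal{D}_{2n}$: for even $j$ we have $j$ in the image of $g'$, so $\sigma(j) \le j$ by property (3) of $\tau$; for an odd $j$ that was relocated, $j$ is an isolated fixed point of $g'$, so $\sigma(j) = j$ by property (2); and for every other odd $j$, $j$ is not in the image of $g'$, so $\sigma(j) > j$ again by property (3). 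That the two surgeries are mutually inverse is then immediate once one notes that the relocation matches the odd diagonal cells of $g = \tau(\sigma)$ with the odd maxima of $f$ bijectively.

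I expect the main obstacle to be this bijectivity verification rather than the three correspondence properties, which are engineered to hold by the choice of surgery. One must confirm that the inverse surgery lands inside the $\tau$-image of $\mathcal{D}_{2n}$, i.e.\ that emptying the odd rows and promoting the odd fixed points never disturbs the even-row data encoding the D-condition; the argument above reduces this entirely to properties (2) and (3) of $\tau$. The only remaining care is at the tableau level: checking that the relocation preserves the ``one $X$ per column'' and excedent constraints, and that the odd rows of a surjective staircase are automatically empty, which guarantees that back-relocating an odd maximum always produces a genuine isolated fixed point.
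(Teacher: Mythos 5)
Your proposal is correct and follows essentially the same route as the paper: compose the Dumont--Randrianarivony bijection $\tau$ with the ``slide the odd diagonal $X$'s up to a new top row $2n+2$'' surgery on tableaux, which is exactly the paper's intermediate map $\gamma$ on the set it calls $\mathcal G_{2n}$. Your verification of the inverse (checking the D-condition via properties (2) and (3) of $\tau$) matches the paper's identification $\tau(\mathcal D_{2n}) = \mathcal G_{2n}$, so there is nothing substantively different to flag.
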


\begin{proof}
Let $ \mathcal G_{2n} $ be the set of all excedent functions  $g \in \mathcal X_{2n}$ that satisfy  
\begin{itemize}
\item the image of $g$ contains $\{2,4,\dots, 2n \}$ 
\item for all $j \in [2n]$, $g(j)$ is odd if and only if $j$ is an odd isolated fixed point of $g$.
\end{itemize}  
 Let $\tau$ be the bijection of Proposition~\ref{DuRaProp}.  We claim that  $\tau(\mathcal D_{2n}) =  \mathcal G_{2n}$.  Indeed, let $\sigma \in \mathcal{D}_{2n}$. The image of the excedent function $\tau(\sigma)$  is $\{2,4,\dots,2n\} \cup \{\text{odd fixed points of } \sigma\}$ by property (3) of Proposition~\ref{DuRaProp}, since $\sigma$ is a D-permutation.  Hence $\tau(\sigma)(j)$ is odd if and only if it is a fixed point of $\sigma$.  By property (2) of  Proposition~\ref{DuRaProp}, the odd fixed points of $\sigma$ are the odd isolated fixed points of $\tau(\sigma)$.

To prove the result we construct a bijection $$\gamma: \mathcal G_{2n} \to \{ f \in \mathcal E_{2n+2}: f \mbox{ has no even maxima}
\}.$$  Then we show that the map $\gamma \circ \tau$ restricted to $\mathcal D_{2n}$ has the desired properties.

Let $g \in \mathcal G_{2n}$.  We define an excedent function $f:[2n+2] \to [2n+2]$ by
$$f(j) = \begin{cases}   g(j), & j \in [2n] \text{ and } g(j) \text{ is even}    \\ 
2n+2, &  j \in [2n] \text{ and } g(j) \text{ is odd}   \\
2n+2, &  j\in \{2n+1,2n+2\} . \end{cases}$$
It is clear that $f \in \mathcal E_{2n+2}$.  We claim that $f$ has no even maxima.  Indeed, the  maxima of $f$ are the $j$ for which  $g(j)$ is odd.  By the definition of $\mathcal G_{2n}$,  such a $j$ must be  a fixed point of $g$,  so all the maxima of $f$ are  odd.    We can now let $$\gamma(g) =f.$$

It is helpful to visualize the map $\gamma$ in terms of tableaux.  First  $\gamma$  adds an empty row labeled $2n+2$ of length $2n+2$ to the top of the tableau $T(g)$, where $g \in \mathcal G_{2n}$.  An $X$ is placed in both of its two rightmost cells.  An $X$ in an odd labeled row of $T(g)$ represents  an isolated fixed point of $g$, so it is in the rightmost  cell of its row.   Next $\gamma$   slides each such $X$ to the top of its  column.    At this point, the odd labeled rows become empty.   We remove these rows to get $T^\prime(\gamma(g))$.  The figure below illustrates the map $\gamma$.

\vspace{.1in}
\begin{ytableau} \none & \none[1] & \none[2] & \none[3] & \none[4] & \none[5] & \none[6] & \none[7] & \none[8]\\
\none[8] & \; & X & \; & \; & \; & \; & \; & X\\
\none[7] & \; & \; & \; & \; & \; & \; & *(yellow) X\\
\none[6] & \; & \; & \; & \; & \; & X\\
\none[5] & \; & \; & \; & \; &*(yellow)  X\\
\none[4] & \; & \; & X & X\\
\none[3] & \; & \; & \;\\
\none[2] & X & \;\\
\none[1] & \;\end{ytableau}

\vspace{-1.1in} \hspace{2.2in} $ \xrightarrow{\gamma}$

\vspace{-1in}
 \hspace{2.6in}\begin{ytableau}
\none & \none[1] & \none[2] & \none[3] & \none[4] & \none[5] & \none[6] & \none[7] & \none[8] & \none[9] & \none[10]\\ 
\none[10] & \; & \; & \; & \; &*(yellow) X &\; &*(yellow) X & \; & X & X\\
\none[8] & \; & X & \; & \; & \; & \; & \; & X\\
\none[6] & \; & \; & \; & \; & \; & X\\
\none[4] &\; & \; & X & X\\
\none[2] & X & \;
\end{ytableau}

 \vspace{.8in}

We claim $\gamma$ is bijective.  Indeed, given   a surjective staircase $f \in \mathcal{E}_{2n+2}$ with no even maxima, we can obtain an excedent function $g: [2n] \to [2n]$ by inserting empty odd labeled rows between the even labeled rows in $T^\prime(f)$ and then sliding the  $X$'s from  the top row down to the bottom of their columns.  After deleting the top row  we obtain  $T(g)$.  Clearly $g \in \mathcal G_{2n}$.  It is not difficult to see that the sliding down process and  sliding up process are inverses of each other.

The sliding process also makes it easy to see that for all  $g \in \mathcal G_{2n}$ and $j \in [2n]$, 
\begin{enumerate}
\item[(A)] $j$ is an even fixed point of $g$ if and only if $j$ is a fixed point of $\gamma(g)$,
\item[(B)] $j$ is an even isolated fixed point of $g$ if and only if $j$ is an isolated fixed point of $\gamma(g)$,
\item[(C)] $j$ is an odd isolated fixed point of $g$ if and only if $j$ is a maximum of $\gamma(g)$.
\end{enumerate}
Now let $\phi $ be the composition  $\gamma \circ \tau |_{\mathcal D_{2n}}$.  Clearly in combination with condition (1) of Proposition~\ref{DuRaProp}, condition (A)   implies condition (1) of the desired result.  Similarly, in combination with condition (2) of Proposition~\ref{DuRaProp}, conditions (B) and (C) imply conditions (2) and (3), respectively, of the desired result. 
\end{proof}

We are now ready to give a proof of Theorem \ref{genth} (2) that does not rely on Hetyei's formula (\ref{heteq2}).   This proof completes  our path to (\ref{heteq2}).

\begin{proof}[Proof of Theorem \ref{genth}, Part (2)] 
 By Lemma \ref{bijectionProp}, we need only show that 
 \begin{equation} \label{Bpolyeq} | \{ f \in \mathcal{E}_{2n+2}: f \mbox{ has  no even maxima} \}| = h_n.\end{equation}  
 But this  is implicit in the work of Dumont and Randrianarivony  \cite{sur_une_extension} and stated explicitly in Corollary 10 (iii) of Randrianarivony \cite{Randrianarivony_Du_Fo_Poly}. 
\end{proof}

\subsection{The generating function formulas} \label{genfnsec}

In this subsection, we will use the bijections of the previous sections to derive generating function formulas  for the characteristic polynomial of the homogenized Linial arrangement. To do so, we use a formula (due independently to Randrianarivony \cite{Randrianarivony_Du_Fo_Poly} and Zeng \cite{Zeng_Du_Fo_Poly}) for the generating function of a multivariate polynomial that enumerates surjective staircases according to six statistics.

Let $f \in \mathcal E_{2n}$. Recall that a \emph{fixed point} of $f$ is a $j \in [2n-2]$ such that $f(j) = j$. A \emph{surfixed point} of $f$ is a $j \in [2n-2]$ such that $f(j) = j+1$. A \emph{maximum} of $f$ is a $j \in [2n-2]$  such that $f(j) =2n$.  Consider the following six statistics on $f$:
\begin{itemize}
\item $\text{fd}(f)$, the number of   fixed points $j$ of $f$ such that $f^{-1}(j) \setminus \{j\}$ is nonempty  (\emph{doubled fixed point})
\item $\text{fi}(f)$,   the number of  fixed points $j$ of $f$ such that $f^{-1}(j) = \{j\}$ (\emph{isolated fixed point})
\item $\text{sd}(f)$, the number of  surfixed points $j$ of $f$ such that $f^{-1}(j+1) \setminus \{j\}$ is nonempty (\emph{doubled surfixed point})
\item $\text{si}(f)$, the number of  surfixed points $j$ of $f$ such that $f^{-1}(j+1) = \{j\}$ (\emph{isolated surfixed point})
\item $\text{mo}(f)$,  the number of maxima $j$ of $f$ that are odd (\emph{odd maximum})
\item $\text{me}(f)$,  the number of maxima $j$ of $f$ that are even (\emph{even maximum}).
\end{itemize}

The \emph{generalized Dumont-Foata polynomial} $\Lambda_{2n}(x,y,z,\bar{x},\bar{y},\bar{z})$ (introduced by Dumont in \cite{Dumont_Foata}) is defined by
$$\Lambda_{2n}(x,y,z,\bar{x},\bar{y},\bar{z}) = \sum_{f \in \mathcal{E}_{2n}}x^{\text{mo}(f)}y^{\text{fd}(f)}z^{\text{si}(f)}\bar{x}^{\text{me}(f)}\bar{y}^{\text{fi}(f)}\bar{z}^{\text{sd}(f)}.
$$

\begin{theorem}[{Randrianarivony \cite[Theorem 4]{Randrianarivony_Du_Fo_Poly} and Zeng \cite[Theorem 5]{Zeng_Du_Fo_Poly}}]  \label{RZ theorem} 
$$\sum_{n\geq 1}\Lambda_{2n}u^n = \sum_{n\geq 1}\frac{(x+\bar{z})^{(n-1)}(y+\bar{x})^{(n-1)}u^n}{\prod_{k=0}^{n-1}(1- [(x+k)(\bar{y}-y) - (\bar{x}+k)(\bar{z}-z) - (x+k)(\bar{x}+k)] u},$$
where $a^{(n)} = a(a+1)\cdots(a+n-1)$.
\end{theorem}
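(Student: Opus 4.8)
The plan is to prove the identity with the combinatorial theory of continued fractions, since the six-variable polynomial $\Lambda_{2n}$ is exactly of Dumont--Foata type and the right-hand side is the signature generating function such polynomials produce. Write the target as $\sum_{n\ge1}\frac{c_{n-1}\,u^n}{\prod_{k=0}^{n-1}(1-\lambda_k u)}$, where $c_{n-1}=(x+\bar z)^{(n-1)}(y+\bar x)^{(n-1)}$ and $\lambda_k=(x+k)(\bar y-y)-(\bar x+k)(\bar z-z)-(x+k)(\bar x+k)$. My first step would be to encode each surjective staircase $f\in\mathcal E_{2n}$, via a Fran\c con--Viennot / Foata--Zeilberger style bijection, as a weighted Motzkin path whose length is governed by $n$, so that the product of the step weights recovers the monomial $x^{\mathrm{mo}(f)}y^{\mathrm{fd}(f)}z^{\mathrm{si}(f)}\bar x^{\mathrm{me}(f)}\bar y^{\mathrm{fi}(f)}\bar z^{\mathrm{sd}(f)}$. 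The natural height parameter is the number of even levels already ``opened'' as one scans the tableau $T'(f)$ from the bottom, and the six statistics become height-dependent weights: the odd/even maxima supply the $x,\bar x$; the doubled/isolated fixed points supply the $y,\bar y$; and the doubled/isolated surfixed points supply the $z,\bar z$, with the number $k$ of already-open levels entering affinely because each such level offers an attachment site for the newly scanned position.

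With that dictionary in hand, Flajolet's theorem identifies $\sum_{n\ge1}\Lambda_{2n}u^n$ with a Jacobi continued fraction whose level weights $b_k$ are affine in $k$ and whose peak weights are quadratic in $k$; concretely one expects $b_k$ to encode the ``idle'' configurations at height $k$ and the peak weights to encode the creation of a new even level, so that the up/down and level weights assemble into the expressions $(x+\bar z+k)$, $(y+\bar x+k)$, and $(x+k)(\bar x+k)$ seen in $c_{n-1}$ and $\lambda_k$. The second step is then purely algebraic: convert this particular continued fraction into the stated sum-over-products form. This is where the minus signs in $\lambda_k$ arise --- they are not positive path weights but the effect of a contraction/transformation identity that rewrites a J-fraction with these height-weights as $\sum_n c_{n-1}u^n/\prod_{k<n}(1-\lambda_k u)$; equivalently, one may verify the closed form directly by showing it satisfies the three-term recurrence of the continued fraction, by induction on $n$ matching coefficients against the partial products $\prod_{k=0}^{n-1}(1-\lambda_k u)$.

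An alternative, more self-contained route avoids Flajolet and works with a recurrence. Peeling the maximal even level $2n$ from $f\in\mathcal E_{2n}$ (the positions sent to $2n$ are always $2n-1,2n$ together with the maxima in $[2n-2]$) and re-seating those maxima among the existing even levels realizes $f$ as obtained from a staircase in $\mathcal E_{2n-2}$ by a level addition whose multiplicity is affine in the number of levels. Tracking how each of the six statistics changes under this operation yields a difference-operator recurrence of Dumont--Foata type, $\Lambda_{2n+2}=\mathcal L\,\Lambda_{2n}$, carrying an intrinsic minus sign; feeding this recurrence into the generating function and solving it produces exactly the stated product form, with the $\lambda_k$ emerging from the difference operator.

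The main obstacle in either route is the weight bookkeeping: one must track all six statistics \emph{simultaneously} through the scan (or the level-peeling) and check that the isolated-versus-doubled dichotomies $(\mathrm{fi}/\mathrm{fd},\ \mathrm{si}/\mathrm{sd})$ and the odd-versus-even maximum dichotomy $(\mathrm{mo}/\mathrm{me})$ recombine into precisely the groupings $(x+\bar z)$, $(y+\bar x)$, $\bar y-y$, $\bar z-z$, and $(x+k)(\bar x+k)$, with no spurious cross terms and with the correct affine dependence on $k$. A secondary point is confirming that the encoding (or the peeling) is a genuine bijection, so that no staircase is missed or double-counted; once the dictionary is pinned down, both the continued-fraction conversion and the recurrence-to-generating-function passage are routine.
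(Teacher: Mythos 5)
This statement is not proved in the paper at all: it is quoted verbatim from Randrianarivony and Zeng, and the paper's ``proof'' is the citation. So there is no internal argument to compare yours against; the only question is whether your proposal would stand on its own as a proof of the cited result. Your overall strategy is the right one and matches the literature in spirit --- Randrianarivony's proof proceeds via a Dumont--Foata-type recurrence for $\Lambda_{2n}$ obtained by peeling the top level of the staircase (your third paragraph), and the continued-fraction/weighted-path viewpoint is the standard alternative --- and your sanity checks are sound (e.g.\ $\Lambda_4 = x\bar y + y\bar z + z\bar x$ is consistent with the right-hand side).

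However, as written the proposal has a genuine gap: every decisive step is asserted rather than carried out. The Fran\c con--Viennot/Foata--Zeilberger-style encoding of a surjective staircase as a weighted Motzkin path, with the six statistics realized as height-dependent weights, is exactly the content of the theorem, and you only say what ``one expects'' the weights to be; no bijection is defined and no weight verification is performed. Likewise, the alternative route requires writing down the explicit recurrence $\Lambda_{2n+2} = \mathcal L\,\Lambda_{2n}$ (it involves the shift $x\mapsto x+1$, $y\mapsto y+1$ in one term, which is where the rising factorials $(x+\bar z)^{(n-1)}(y+\bar x)^{(n-1)}$ come from), proving it by the peeling argument, and then solving the resulting functional equation for $F(u)=\sum_n\Lambda_{2n}u^n$ by iteration to obtain the sum-over-products form; none of these three sub-steps is supplied. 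The remark that the minus signs in $\lambda_k$ come from ``a contraction/transformation identity'' is also unsubstantiated --- one must actually exhibit the identity converting the continued fraction to $\sum_n c_{n-1}u^n/\prod_{k=0}^{n-1}(1-\lambda_k u)$, or bypass it via the recurrence. You correctly identify the weight bookkeeping as ``the main obstacle,'' but that obstacle is the theorem; until it is overcome, this is a research plan rather than a proof.
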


\begin{lemma}  \label{cyclem} For all $n \ge 1$,
$$\sum_{\sigma \in \mathcal D_{2n}} t^{c(\sigma)} = \Lambda_{2n+2}(t,t,1,0,t,1),$$
where $c(\sigma)$ is the number of cycles of $\sigma$.
\end{lemma}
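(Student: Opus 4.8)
The plan is to combine the bijection $\phi$ from Lemma~\ref{bijectionProp} with a direct analysis of how the cycles of a D-permutation are distributed according to the parity of their maxima. First I would specialize $\Lambda_{2n+2}$. Substituting $(x,y,z,\bar x,\bar y,\bar z) = (t,t,1,0,t,1)$, the factor $\bar x^{\mathrm{me}(f)} = 0^{\mathrm{me}(f)}$ annihilates every term coming from a surjective staircase with at least one even maximum, while $z = \bar z = 1$ kills the surfixed-point statistics $\mathrm{si}$ and $\mathrm{sd}$. Hence
\begin{equation*}
\Lambda_{2n+2}(t,t,1,0,t,1) = \sum_{\substack{f \in \mathcal E_{2n+2} \\ \mathrm{me}(f) = 0}} t^{\,\mathrm{mo}(f) + \mathrm{fd}(f) + \mathrm{fi}(f)}.
\end{equation*}
The index set here is exactly the codomain of $\phi$, namely the surjective staircases in $\mathcal E_{2n+2}$ with no even maxima, so $\phi$ lets me rewrite $\sum_{\sigma \in \mathcal D_{2n}} t^{c(\sigma)}$ as a sum over these same $f$, and it remains to match exponents.

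The key combinatorial observation is a dichotomy for the cycle maxima of a D-permutation $\sigma \in \mathcal D_{2n}$. If $m$ is the maximum of one of its cycles, then in particular $\sigma(m) \le m$; when $m$ is odd, the defining inequality $m \le \sigma(m)$ forces $\sigma(m) = m$, so $m$ is an odd fixed point and its cycle is a singleton. Thus every cycle either has an even maximum or is a singleton consisting of an odd fixed point, and these two cases are mutually exclusive. Summing over the cycles of $\sigma$ yields
\begin{equation*}
c(\sigma) = \#\{\text{even cycle maxima of }\sigma\} + \#\{\text{odd fixed points of }\sigma\}.
\end{equation*}

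Finally I would transport this identity across $\phi$, writing $f := \phi(\sigma)$. By property~(1) of Lemma~\ref{bijectionProp}, the even cycle maxima of $\sigma$ are precisely the fixed points of $f$, whose total count splits as $\mathrm{fd}(f) + \mathrm{fi}(f)$ (doubled plus isolated). By property~(3), the odd fixed points of $\sigma$ are precisely the odd maxima of $f$, counted by $\mathrm{mo}(f)$. Hence $c(\sigma) = \mathrm{fd}(f) + \mathrm{fi}(f) + \mathrm{mo}(f)$, which is exactly the exponent appearing in the specialized $\Lambda_{2n+2}$; since $\phi$ is a bijection onto the staircases with $\mathrm{me} = 0$, the two sums agree termwise and the lemma follows. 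The step I would be most careful about is the cycle-maximum dichotomy: one must verify that an odd cycle maximum is genuinely forced to be a fixed point, so that the even-maximum cycles and the odd fixed points form disjoint classes and no cycle is counted twice.
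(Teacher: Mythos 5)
Your proof is correct and takes essentially the same approach as the paper: both transport the cycle count of $\sigma \in \mathcal D_{2n}$ across the bijection $\phi$ of Lemma~\ref{bijectionProp} and identify the resulting sum over surjective staircases with no even maxima as the specialization $\Lambda_{2n+2}(t,t,1,0,t,1)$. The only difference is that you explicitly justify the identity $c(\sigma)=\mathrm{fd}(f)+\mathrm{fi}(f)+\mathrm{mo}(f)$ via the cycle-maximum dichotomy (every cycle of a D-permutation either has an even maximum or is a singleton odd fixed point), a step the paper leaves implicit.
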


\begin{proof} Let $\phi$ be the bijection of Lemma~\ref{bijectionProp}.   By  Lemma~\ref{bijectionProp},
\begin{align*} \sum_{\sigma \in \mathcal D_{2n}} t^{c(\sigma)}  &= \sum_{\sigma \in \mathcal{D}_{2n}}  t^{\text{fi}(\phi(\sigma)) + \text{fd}(\phi(\sigma)) + \text{mo}(\phi(\sigma))}  
\\ &=   \sum_{\scriptsize \begin{array}{c} f \in \mathcal{E}_{2n+2} \\  \text{me}(f) = 0 \end{array} }  t^{\text{fi}(f) + \text{fd}(f) + \text{mo}(f)}  \\
& =  \Lambda_{2n+2}(t,t,1,0,t,1) .
\end{align*}

\end{proof} 

We  finally  have the tools needed to prove  (\ref{introgenchareq}), which is restated here.

\begin{theorem}\label{typeAGen} We have
\begin{equation} \label{typeAGenEq}\sum_{n\geq 1}\chi_{\Pi_{\Gamma_{2n}}}\! (t) \,u^{n} = \sum_{n\geq 1}\frac{(t-1)_{n}(t-1)_{n-1}u^n}{\prod_{k=1}^n(1-k(t-k)u)},\end{equation}
where $(a)_n = a(a-1)\cdots(a-(n-1))$.
\end{theorem}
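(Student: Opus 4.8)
The plan is to reduce the left-hand side to a specialization of the Randrianarivony--Zeng generating function (Theorem~\ref{RZ theorem}) by chaining together the combinatorial identities already established. First I would rewrite the characteristic polynomial as a signed cycle-counting sum over D-permutations. By Theorem~\ref{mobth} (together with the isomorphism of Theorem~\ref{bondth}), the coefficient $s_D(2n,k)$ satisfies $(-1)^k s_D(2n,k) = \#\{\sigma \in \mathcal{D}_{2n} : c(\sigma) = k\}$, so that
\begin{equation*}
\chi_{\Pi_{\Gamma_{2n}}}(t) = \sum_{\sigma \in \mathcal{D}_{2n}} (-1)^{c(\sigma)} t^{c(\sigma)-1} = \frac{1}{t}\sum_{\sigma \in \mathcal{D}_{2n}} (-t)^{c(\sigma)}.
\end{equation*}
Applying Lemma~\ref{cyclem} with $t$ replaced by $-t$ converts the inner sum into a value of the generalized Dumont--Foata polynomial, giving $\chi_{\Pi_{\Gamma_{2n}}}(t) = \tfrac{1}{t}\,\Lambda_{2n+2}(-t,-t,1,0,-t,1)$.

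Next I would feed the specialization $(x,y,z,\bar{x},\bar{y},\bar{z}) = (-t,-t,1,0,-t,1)$ into Theorem~\ref{RZ theorem}. The point of this particular choice is that it collapses the bracket in the denominator: since $\bar{y}-y = 0$ and $\bar{z}-z = 0$ under this substitution, the only surviving term is $-(x+k)(\bar{x}+k) = -(k-t)k$, so the $k$-th denominator factor becomes $1 - k(t-k)u$ (and the $k=0$ factor is simply $1$). The numerator $(x+\bar{z})^{(n-1)}(y+\bar{x})^{(n-1)}$ becomes $(1-t)^{(n-1)}(-t)^{(n-1)}$, a product of two rising factorials. This yields a closed form for $F(u) := \sum_{m\ge 1}\Lambda_{2m}(-t,-t,1,0,-t,1)\,u^m$.

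The remaining work is bookkeeping, which is also where the main risk of error lies. Because the characteristic polynomial involves $\Lambda_{2n+2}$ rather than $\Lambda_{2m}$, I must pass from $F(u)$ to $\sum_{n\ge1}\Lambda_{2n+2}(\cdots)\,u^n = F(u)/u - \Lambda_2(\cdots)$; here one checks directly that $\Lambda_2 = 1$, since the unique surjective staircase on $[2]$ carries no statistics (the index set $[2n-2]$ is empty when $n=1$). Dividing $F(u)$ by $u$ and reindexing produces a sum over $n'\ge 0$ whose $n'=0$ term equals $1$, which is exactly cancelled by the $-\Lambda_2 = -1$ correction, leaving a clean sum over $n\ge 1$. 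Finally I would carry the leftover factor $1/t$ through and convert rising factorials to falling factorials: using $(1-t)^{(n)} = (-1)^n (t-1)_n$ and $(-t)^{(n)} = (-1)^n (t)_n = (-1)^n\, t\,(t-1)_{n-1}$, the prefactor simplifies to $\tfrac{1}{t}(1-t)^{(n)}(-t)^{(n)} = (t-1)_n (t-1)_{n-1}$, matching the right-hand side of~(\ref{typeAGenEq}) term by term. The hard part is not any single identity but keeping the signs, the factor $1/t$, and the index shift mutually consistent, so I would verify the final formula against the $n=1$ row of the table (where $\chi = t-1$) as a sanity check.
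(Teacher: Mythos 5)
Your proposal is correct and follows essentially the same route as the paper: Theorem~\ref{mobth} to express $\chi_{\Pi_{\Gamma_{2n}}}(t)$ as a signed cycle-enumerator over $\mathcal{D}_{2n}$, Lemma~\ref{cyclem} to convert that into $\Lambda_{2n+2}$ at a specialization, the Randrianarivony--Zeng formula, and the same $\Lambda_2=1$ index-shift bookkeeping; the only cosmetic difference is that you substitute $t\mapsto -t$ before invoking Theorem~\ref{RZ theorem} while the paper substitutes afterward. Your direct evaluation of the denominator bracket as $1-k(t-k)u$ is consistent with the theorem statement, so no gap.
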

 
 \begin{proof}  By Lemma~\ref{cyclem},  we have that 
 $$ \sum_{n \ge 1} \sum_{\sigma \in \mathcal D_{2n}} t^{c(\sigma)} u^n = \sum_{n\ge 2} \Lambda_{2n}(t,t,1,0,t,1) u^{n-1}  .$$
 Since $\Lambda_{2}(t,t,1,0,t,1)  = 1$,  it therefore follows from  Theorem ~\ref{RZ theorem} that
 \begin{align*} \sum_{n \ge 1} \sum_{\sigma \in \mathcal D_{2n}} t^{c(\sigma)} u^n &= -1+ \sum_{n\ge 1} \frac{ (t+1)^{(n-1)} t^{(n-1)} u^{n-1} } {\prod_{k=1}^{n-1}(1-(t+k)ku) }\\
 &= \sum_{n\ge 2} \frac{ (t+1)^{(n-1)} t^{(n-1)} u^{n-1} } {\prod_{k=1}^{n-1}(1-(t+k)ku) }\\
&=  \sum_{n\ge 1} \frac{(t+1)^{(n)} t^{(n)} u^{n}} { \prod_{k=1}^{n}(1-(t+k)ku)}.
 \end{align*}
 Now by Theorem~\ref{mobth}, we have
  \begin{align*}\sum_{n\geq 1}\chi_{\Pi_{\Gamma_{2n}}}\! (t) \,u^{n} &= - \sum_{n \ge 1} \sum_{\sigma \in \mathcal D_{2n}} (-t)^{c(\sigma)-1} u^n \\
&= t^{-1}  \sum_{n\ge 1} \frac{(-t+1)^{(n)} (-t)^{(n)} u^{n}} { \prod_{k=1}^{n}(1-(-t+k)ku)} \\
&=  \sum_{n\ge 1} \frac{(t-1)_{n} (t-1)_{n-1} u^{n}} { \prod_{k=1}^{n}(1+(t-k)ku)}.
 \end{align*}
 \end{proof}

\begin{remark} Theorem~\ref{chiDlem}  can also be used to prove Theorem~\ref{typeAGen}.  Indeed,
by Theorem~\ref{chiDlem} and Lemma~\ref{bijectionProp},
$$ \chi_{\Pi_{\Gamma_{2n}}}(t) = \Lambda_{2n}(1-t,1-t,1,0,-t,1) .
$$
Equation~(\ref{typeAGenEq}) now follows from the Randrianarivony-Zeng formula given in Theorem~\ref{RZ theorem}.
\end{remark}

It follows from (\ref{geneq}) and (\ref{heteq2}) that  equation~(\ref{typeAGenEq}) can  be viewed as a unifying generalization of two of the four generating function formulas stated in Section~\ref{gensec}.  
Indeed, when $t$ is set equal to $0$, equation~(\ref{typeAGenEq}) reduces to the Barsky-Dumont formula (\ref{BD2eq}) for the Genocchi numbers.  When $t$ is set equal to $1$, equation~(\ref{typeAGenEq}) reduces to the first Barsky-Dumont formula (\ref{BD1eq}) for the median Genocchi numbers.  We will see that the next result is  a unifying generalization of the other two generating function formulas stated in Section~\ref{gensec}.   

\begin{theorem} We have
 \begin{equation} \label{QuotCharTh} \sum_{n\geq 1}\chi_{\Pi_{\Gamma_{2n+2}}}\! (t) \,u^{n} = (t-1) \sum_{n\geq 1}\frac{\left((t-1)_{n}\right)^2 u^{n}}{\prod_{k=1}^{n}\left(1-k(t-k)u\right)}.\end{equation}  Equivalently,
\begin{equation} \label{QuotChar2Th} \sum_{n\geq 1}\chi_{L_{2n+2}}\! (t) \,u^{n} = \sum_{n\geq 1}\frac{\left((t-2)_{n-1}\right)^2 u^{n}}{\prod_{k=1}^{n}\left(1-k(t-k)u\right)},
\end{equation} 
where $L_{2n}$ is the geometric semilattice defined in Section~\ref{redsec}.
\end{theorem}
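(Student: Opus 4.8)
The plan is to establish, in direct analogy with the Remark following Theorem~\ref{typeAGen}, a closed specialization of the generalized Dumont--Foata polynomial equal to $\chi_{\Pi_{\Gamma_{2n+2}}}(t)$, and then to feed it into the Randrianarivony--Zeng formula (Theorem~\ref{RZ theorem}). Whereas that Remark used Theorem~\ref{chiDlem}, here I would start from Theorem~\ref{QuotCharLem}, which (after replacing $2n$ by $2n+2$, valid for all $n\ge 1$) gives
$$\chi_{\Pi_{\Gamma_{2n+2}}}(t) = (t-1)^3 \sum_{\sigma\in\mathcal{D}_{2n-2}}(1-t)^{\#\{\text{fixed points of }\sigma\}}(2-t)^{\#\{\text{other cycles of }\sigma\}}.$$

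First I would transport the right-hand sum through the bijection $\phi$ of Lemma~\ref{bijectionProp}, exactly as in the proof of Lemma~\ref{cyclem}. Each fixed point of $\sigma$ is even or odd; by parts (2) and (3) of Lemma~\ref{bijectionProp} the even ones become isolated fixed points (counted by $\text{fi}$) and the odd ones become odd maxima (counted by $\text{mo}$) of $\phi(\sigma)$. Every cycle of length $\ge 2$ has an even maximum, since a cycle maximum $m$ of such a cycle satisfies $\sigma(m)<m$, which forces $m$ even by the D-condition; by part (1) these even cycle maxima are precisely the doubled fixed points (counted by $\text{fd}$). Hence $\#\{\text{fixed points of }\sigma\} = \text{fi}(\phi\sigma) + \text{mo}(\phi\sigma)$ and $\#\{\text{other cycles of }\sigma\} = \text{fd}(\phi\sigma)$, and since $\phi$ is a bijection onto the staircases with no even maximum, I obtain
$$\chi_{\Pi_{\Gamma_{2n+2}}}(t) = (t-1)^3\,\Lambda_{2n}(1-t,\,2-t,\,1,\,0,\,1-t,\,1),$$
where $\bar x=0$ encodes the restriction $\text{me}=0$ and the remaining assignments match $x=1-t$ and $\bar y=1-t$ (both fixed-point types receive weight $1-t$), $y=2-t$, and $z=\bar z=1$. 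I expect this specialization to be the main obstacle: one must verify that the two kinds of fixed points collapse to the single weight $1-t$ across the distinct statistics $\text{mo}$ and $\text{fi}$, that ``other cycles'' land on $\text{fd}$, and that setting $\bar x=0$ correctly restricts the sum.

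The rest is computation. Substituting the six values into Theorem~\ref{RZ theorem}, the numerator becomes $(x+\bar z)^{(n-1)}(y+\bar x)^{(n-1)} = \bigl((2-t)^{(n-1)}\bigr)^2$, while in the denominator the bracketed quantity with index $k$ simplifies (using $\bar y-y=-1$, $\bar z-z=0$) to $-(k+1)\bigl(k+1-t\bigr)$, so the $k$-th factor is $1-(k+1)\bigl(t-(k+1)\bigr)u$ and $\prod_{k=0}^{n-1}$ turns into $\prod_{k=1}^{n}\bigl(1-k(t-k)u\bigr)$; this shift-and-sign step is where I would be most careful. Converting the rising factorial via $(2-t)^{(n-1)} = (-1)^{n-1}(t-1)_n/(t-1)$ gives $\bigl((2-t)^{(n-1)}\bigr)^2 = \bigl((t-1)_n\bigr)^2/(t-1)^2$, and multiplying by the prefactor $(t-1)^3$ (note the index alignment $\chi_{\Pi_{\Gamma_{2n+2}}}\leftrightarrow\Lambda_{2n}$ requires no shift, with $\Lambda_2=1$ matching $\chi_{\Pi_{\Gamma_4}}=(t-1)^3$) yields (\ref{QuotCharTh}). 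Finally, (\ref{QuotChar2Th}) follows at once by dividing through by $(t-1)^3$ via (\ref{chiLeq}) and rewriting $(t-1)^{-2}\bigl((t-1)_n\bigr)^2 = \bigl((t-2)_{n-1}\bigr)^2$, since $(t-1)_n/(t-1)=(t-2)(t-3)\cdots(t-n)=(t-2)_{n-1}$.
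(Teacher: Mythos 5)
Your proof is correct and follows essentially the same route as the paper: the paper's own argument is exactly the combination of Theorem~\ref{QuotCharLem} with Lemma~\ref{bijectionProp} to obtain $\chi_{\Pi_{\Gamma_{2n+2}}}(t)=(t-1)^3\Lambda_{2n}(1-t,2-t,1,0,1-t,1)$, followed by the Randrianarivony--Zeng formula and (\ref{chiLeq}). You have merely supplied the bookkeeping the paper omits (the parity analysis of cycle maxima, the index shift in the denominator product, and the factorial conversions), and all of those details check out.
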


\begin{proof}
 By Theorem \ref{QuotCharLem} and Lemma~\ref{bijectionProp} we have that 
 for all   $n \ge 1$, $$\chi_{\Pi_{\Gamma_{2n+2}}}\! (t)= (t-1)^3 \Lambda_{2n}(1-t,2-t,1,0,1-t,1).$$
 The result now follows from the Randrianarivony-Zeng formula given in Theorem~\ref{RZ theorem}.

The equivalency of (\ref{QuotCharTh}) and (\ref{QuotChar2Th}) follows from (\ref{chiLeq}). \end{proof}

Equation (\ref{QuotChar2Th})  can   also  be viewed as a unifying generalization of two of the generating function formulas given in Section~\ref{gensec}.
Indeed, by Corollary~\ref{quotcor}, when $t$ is set equal to $0$, equation (\ref{QuotChar2Th})  reduces to the  Carlitz-Riordan-Stein formula (\ref{BD4eq}) for the    Genocchi numbers.
 When $t$ is set equal to $1$, equation (\ref{QuotChar2Th})  reduces to the second  Barsky-Dumont formula (\ref{BD3eq}) for the  the  median Genocchi numbers.  It is interesting that  the generating function formula (\ref{typeAGenEq}) for the characteristic polynomial of $ \Pi_{\Gamma_{2n}}$ reduces to one set of generating function formulas given in Section~\ref{gensec}, while the generating function formula (\ref{QuotChar2Th}) for the characteristic polynomial of $ L_{2n}$ reduces to the other set.

When we set $t=-1$ in (\ref{QuotCharTh}) we obtain a formula for the generating function of the median Genocchi numbers that is not listed in Section~\ref{gensec},
namely
$$\sum_{n\ge 0} h_n u^n = 1+ 2 \sum_{n\ge 1} \frac{ (n!)^2 u^n}{ \prod_{k=1}^{n-1} (1+k(k+1)u) }.$$
This formula might be known, but we have not seen it in the literature.
  
\begin{remark} Many of the  results in this section for $V=[2n]$ are extended to general Ferrers graphs $\Gamma_{V}$ in a forthcoming paper \cite{Other_Ferrers_Shapes}.
\end{remark} 
\section{Other  permutation models}  \label{othersec} 
In this section, we show that a formula of Chung and Graham \cite{cover_polynomial} for  chromatic polynomials of incomparablity graphs yields an interpretation  of the median Genocchi numbers in terms of yet another class of Dumont like permutations.  This  leads us to  conjecture an analogous interpretation of  the Genocchi numbers.  

A {\em drop} of a permutation $\sigma$ on a finite set of positive integers is a pair $(i, \sigma(i))$ for which $i > \sigma(i)$.  We say a drop $(i, \sigma(i))$ is an {\em even-odd  drop} if $i$ is even and $\sigma(i)$ is odd.  Let $d(n,k)$ be the number of permutations in $\sg_n$ with exactly $k$ drops that are {\em not} even-odd.
For example  the cycle $(1,3,2) \in \sg_3$ has two drops $(2,1)$ and $(3,2)$; the first drop is even-odd and  the second drop is not.    Hence this cycle counts towards $d(3,1)$.
One can check that $d(3,0) = 2$, $d(3,1) = 4$, and $d(3,2) = 0$.

\begin{theorem} \label{newcharth} For all $n \ge 0$,
\begin{equation} \label{newchareq} \chi_{\mathcal L (\mathcal H_{2n-1})}(t) = \frac{1}{(2n)!} \sum_{k=0}^{2n-1} d(2n,k) (t+1)^{(k)} (t-1)_{2n-1-k},\end{equation}
where $x^{(m)} = x(x+1) \cdots (x+m-1)$ and  $x_m := x(x-1)\cdots (x-m+1)$.
\end{theorem}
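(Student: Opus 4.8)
The plan is to reduce the statement to a computation of the chromatic polynomial of the Ferrers graph $\Gamma_{2n}$ and then to recognize $\Gamma_{2n}$ as an incomparability graph so that the Chung--Graham formula \cite{cover_polynomial} applies. By Theorem~\ref{bondth} we have $\chi_{\mathcal L(\mathcal H_{2n-1})}(t) = \chi_{\Pi_{\Gamma_{2n}}}(t)$, and by (\ref{chromchareq}) this equals $t^{-1}{\rm ch}_{\Gamma_{2n}}(t)$. Hence it suffices to establish the corresponding identity for the chromatic polynomial, namely
$$ {\rm ch}_{\Gamma_{2n}}(t) = \frac{t}{(2n)!}\sum_{k=0}^{2n-1} d(2n,k)\,(t+1)^{(k)}(t-1)_{2n-1-k}. $$

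First I would exhibit the poset whose incomparability graph is $\Gamma_{2n}$. Define a relation on $[2n]$ by declaring $a <_P b$ whenever $a < b$ and it is \emph{not} the case that $a$ is odd and $b$ is even. A short check shows this relation is transitive: if $a<_P b <_P c$ and $a$ were odd with $c$ even, then the first relation would force $b$ to be odd and the second would force $b$ to be even. So $P$ is a poset. Two elements $a < b$ are incomparable in $P$ exactly when $a$ is odd and $b$ is even, which is precisely the condition for $\{a,b\}$ to be an edge of $\Gamma_{2n}$; thus $\Gamma_{2n}$ is the incomparability graph of $P$. The payoff of this description is a translation of statistics: for $\sigma \in \sg_{2n}$, a drop $(i,\sigma(i))$ is an even--odd drop if and only if $\{i,\sigma(i)\}$ is an edge of $\Gamma_{2n}$, i.e.\ if and only if $i$ and $\sigma(i)$ are incomparable in $P$. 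Consequently the drops that are \emph{not} even--odd are exactly the drops $\sigma(i) <_P i$ that respect the order of $P$, and $d(2n,k)$ is the number of permutations in $\sg_{2n}$ with exactly $k$ such order-respecting drops.

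With $\Gamma_{2n}$ realized as $\mathrm{inc}(P)$, I would invoke the Chung--Graham expansion of the chromatic polynomial of an incomparability graph, which expresses ${\rm ch}_{\mathrm{inc}(P)}(t)$ as a sum over $\sg_{2n}$ in which each permutation contributes a product of a rising-factorial factor and a falling-factorial factor governed by its order-respecting drops. The main work is to match the Chung--Graham weights, specialized to $P$, against the claimed summand: I expect the $k$ order-respecting drops to produce the rising factorial $(t+1)^{(k)}$ and the remaining $2n-1-k$ positions to produce the falling factorial $(t-1)_{2n-1-k}$, with the overall $1/(2n)!$ normalization (and the extra factor of $t$ coming from (\ref{chromchareq})) falling out of the Chung--Graham statement. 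Grouping the $(2n)!$ permutations by their number of order-respecting drops then collapses the sum to $\sum_k d(2n,k)(t+1)^{(k)}(t-1)_{2n-1-k}$, giving the theorem.

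The hard part will be the bookkeeping in this last matching step: verifying that the Chung--Graham drop statistic, which is defined relative to the ambient linear order and the digraph of $P$, specializes exactly to the even--odd versus non-even--odd dichotomy, and that the rising/falling split of the factorial weight lines up with the split of positions into the $k$ order-respecting drops and the complementary $2n-1-k$ positions. Two consistency checks guide and confirm this: summing the leading terms of the weights gives $\frac{1}{(2n)!}\sum_k d(2n,k)\,t^{2n-1} = t^{2n-1}$ since $\sum_k d(2n,k) = (2n)!$, recovering that $\chi_{\mathcal L(\mathcal H_{2n-1})}$ is monic of degree $2n-1$; and the case $n=1$ reduces to $\frac{1}{2}\bigl(d(2,0)(t-1)+d(2,1)(t+1)\bigr) = \frac{1}{2}\cdot 2(t-1) = t-1$, matching the tabulated value. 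Finally, dividing by $t$ via (\ref{chromchareq}) converts the identity for ${\rm ch}_{\Gamma_{2n}}$ into the asserted identity for $\chi_{\mathcal L(\mathcal H_{2n-1})}$.
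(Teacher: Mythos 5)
Your proposal is correct and follows essentially the same route as the paper: realize $\Gamma_{2n}$ as the incomparability graph of the parity poset (your $P$ is exactly the paper's $P_{2n}$), identify non-even-odd drops with $P$-drops, and apply the Chung--Graham expansion ${\rm ch}_{\mathrm{inc}(P)}(t)=\sum_k d(P,k)\binom{t+k}{|P|}$ before dividing by $t$. The ``hard part'' you flag is actually routine: $\binom{t+k}{2n}=\frac{1}{(2n)!}(t+1)^{(k)}\cdot t\cdot (t-1)_{2n-1-k}$ by splitting the $2n$ consecutive factors of $(t+k)_{2n}$ at $t$, which immediately yields the stated formula.
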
 

Before proving this theorem we state a few consequences. By setting $t=-1$ in~(\ref{newchareq}) and using Hetyei's result (\ref{heteq2}), we obtain yet another combinatorial interpretation of the median Genocchi numbers.

\begin{corollary} \label{newMedCor} For all $n \ge 1$,
$$ h_n = |\{\sigma \in \sg_{2n} : \sigma \mbox{ has only even-odd drops}\}|.$$
\end{corollary}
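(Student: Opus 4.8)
The plan is simply to specialize the already-established Theorem~\ref{newcharth} at $t = -1$ and then invoke Hetyei's formula (\ref{heteq2}). The crucial observation is that the rising factorial $(t+1)^{(k)} = (t+1)(t+2)\cdots(t+k)$ contains the factor $(t+1)$ whenever $k \ge 1$; hence upon setting $t = -1$ every summand in (\ref{newchareq}) vanishes except the one indexed by $k = 0$, for which $(t+1)^{(0)}$ is the empty product $1$. Thus the entire sum collapses to the single term $d(2n,0)\,(t-1)_{2n-1}$ evaluated at $t = -1$.

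Next I would evaluate the surviving falling factorial. At $t = -1$ we have $(t-1)_{2n-1} = (-2)_{2n-1} = (-2)(-3)\cdots(-2n)$, a product of $2n-1$ consecutive negative integers. Factoring out the signs gives $(-1)^{2n-1}(2\cdot 3\cdots 2n) = -(2n)!$. Substituting back into (\ref{newchareq}), the prefactor $1/(2n)!$ cancels the factorial, leaving
$$\chi_{\mathcal L(\mathcal H_{2n-1})}(-1) = -\,d(2n,0).$$

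Finally, combining this with Hetyei's identity $-\chi_{\mathcal L(\mathcal H_{2n-1})}(-1) = h_n$ from (\ref{heteq2}) yields $h_n = d(2n,0)$. By the definition of $d(2n,k)$, the quantity $d(2n,0)$ counts the permutations in $\sg_{2n}$ having no drops other than even-odd drops, which is precisely the set appearing in the statement; this completes the argument.

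I do not expect a genuine obstacle here, since the corollary is a pure specialization of Theorem~\ref{newcharth}: the only steps demanding care are verifying that the $k \ge 1$ terms really drop out and keeping the sign straight when evaluating $(-2)_{2n-1} = -(2n)!$. All of the substantive work lies in Theorem~\ref{newcharth} itself, whose proof rests on the Chung--Graham formula for chromatic polynomials of incomparability graphs, rather than in this specialization.
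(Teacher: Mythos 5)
Your proof is correct and follows exactly the paper's route: the paper also obtains this corollary by setting $t=-1$ in Theorem~\ref{newcharth} and invoking Hetyei's formula (\ref{heteq2}), merely leaving implicit the computations you spell out (the vanishing of the $k\ge 1$ terms via the factor $(t+1)$ and the evaluation $(-2)_{2n-1}=-(2n)!$).
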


By setting $t=0$ in~(\ref{newchareq}) and applying (\ref{geneq}) we obtain yet another formula for the Genocchi numbers.
\begin{corollary} \label{Newgencor} For all $n \ge 1$,
$$ g_n = \frac{1}{(2n)!} \sum_{k=0}^{2n-1} (-1)^{k} d(2n,k) k! (2n-1-k)! .$$
\end{corollary}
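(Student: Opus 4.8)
The plan is to obtain this identity by the direct substitution $t=0$ into the formula (\ref{newchareq}) of Theorem~\ref{newcharth}, exactly as the statement of the corollary indicates; essentially no new combinatorics is required, only Theorem~\ref{newcharth} together with Corollary~\ref{gencor}.

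First I would evaluate the left-hand side of (\ref{newchareq}) at $t=0$. Recalling from the definition $\chi_P(t) = \sum_{x\in P}\mu_P(\hat 0,x)\,t^{\ell-\mathrm{rk}(x)}$ that setting $t=0$ annihilates every term except the one with $\mathrm{rk}(x)=\ell$, and noting that $\mathcal L(\mathcal H_{2n-1})$ is a (geometric) lattice by Theorem~\ref{bondth} so that the unique such element is $\hat 1$, I get $\chi_{\mathcal L(\mathcal H_{2n-1})}(0) = \mu_{\mathcal L(\mathcal H_{2n-1})}(\hat 0,\hat 1)$. By Corollary~\ref{gencor}, i.e.\ equation (\ref{geneq}), this equals $-g_n$.

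Next I would evaluate the right-hand side at $t=0$. The rising factorial becomes $(t+1)^{(k)}\big|_{t=0} = 1^{(k)} = 1\cdot 2\cdots k = k!$, and the falling factorial becomes $(t-1)_{2n-1-k}\big|_{t=0} = (-1)_{2n-1-k} = (-1)(-2)\cdots\bigl(-(2n-1-k)\bigr) = (-1)^{2n-1-k}(2n-1-k)!$. Substituting these and the value $-g_n$ into (\ref{newchareq}) at $t=0$ yields
\[
-g_n \;=\; \frac{1}{(2n)!}\sum_{k=0}^{2n-1} d(2n,k)\,k!\,(-1)^{2n-1-k}(2n-1-k)!.
\]

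The only point requiring care is the sign. Since $(-1)^{2n-1-k} = (-1)^{2n-1}(-1)^{-k} = -(-1)^{k}$, each summand carries the factor $-(-1)^{k}$; pulling the overall $-1$ out of the sum and cancelling it against the $-1$ on the left-hand side gives
\[
g_n \;=\; \frac{1}{(2n)!}\sum_{k=0}^{2n-1}(-1)^{k}\,d(2n,k)\,k!\,(2n-1-k)!,
\]
which is precisely the claimed formula. I expect no genuine obstacle beyond this elementary sign bookkeeping, since all of the substance is carried by the previously established Theorem~\ref{newcharth} and Corollary~\ref{gencor}.
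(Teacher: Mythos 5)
Your proposal is correct and follows exactly the paper's route: the paper proves this corollary precisely by setting $t=0$ in (\ref{newchareq}) and invoking (\ref{geneq}), and your evaluation of $(t+1)^{(k)}$ and $(t-1)_{2n-1-k}$ at $t=0$ together with the sign bookkeeping is accurate. Nothing further is needed.
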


A much nicer combinatorial formula for the Genocchi numbers analogous to that of Corollary~\ref{newMedCor} is given  in the following conjecture.
\begin{conjecture} \label{newGenCon} For all $n \ge 1$, $g_n$ is equal to the number of cycles on $[2n]$ with only even-odd drops.
\end{conjecture}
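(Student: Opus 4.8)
The plan is to reduce the conjecture to a count of single cycles and then match that count against the established model $g_n=|\mathcal{DC}_{2n}|$ of Theorem~\ref{genth}(1). Writing $c(\sigma)$ for the number of cycles of $\sigma$, the D-permutation model of Theorem~\ref{mobth} yields the identity
$$-\chi_{\mathcal L(\mathcal H_{2n-1})}(-t)=\sum_{\sigma\in\mathcal{D}_{2n}}t^{\,c(\sigma)-1},$$
whose value at $t=0$ is $g_n$ by (\ref{geneq}). Corollary~\ref{newMedCor} shows that the permutations on $[2n]$ with only even-odd drops are equinumerous with $\mathcal{D}_{2n}$ (both are counted by $h_n$). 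Thus the conjecture is precisely the assertion that the single cycles in the even-odd-drop family number $g_n$, i.e. the coefficient of $t^0$ in the (conjectural) refined identity
$$-\chi_{\mathcal L(\mathcal H_{2n-1})}(-t)=\sum_{\substack{\sigma\in\sg_{2n}\\ \sigma\text{ has only even-odd drops}}} t^{\,c(\sigma)-1}.$$
In particular, any bijection between $\mathcal{D}_{2n}$ and the even-odd-drop permutations that preserves $c(\sigma)$ would establish the full refined identity and hence the conjecture; for the conjecture alone it suffices to control the single cycles.

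My primary route is therefore a direct bijection $\beta$ from the D-cycles $\mathcal{DC}_{2n}$ to the even-odd-drop cycles on $[2n]$. The two cycle families agree at odd positions (both force $\sigma(i)>i$ for odd $i$, since neither permits a fixed point in a full cycle nor a drop at an odd index) and differ only at even positions: a D-cycle requires $\sigma(i)<i$ at every even $i$, whereas an even-odd-drop cycle permits excedances at even positions but forbids a drop to an even value. I would construct $\beta$ by a local re-routing at the even positions, trading the ``all values below $i$'' regime of a D-cycle for the ``excedance, or drop to an odd value'' regime, while certifying that the single-cycle property is preserved. A promising concrete handle is the increasing-decreasing tree model: by Theorem~\ref{IncDecBijection} the D-cycles are the images of ID trees on $[2n]$ under the postorder reading $T\mapsto(\pw(\hat T))$, so one may instead seek a second canonical reading of an ID tree (a different traversal or rooting) whose associated cycle is an even-odd-drop cycle, realizing $\beta$ as the composition of $\psi^{-1}$ with this alternative reading.

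An alternative and potentially more systematic route is to track cycles inside the Chung--Graham machinery underlying Corollary~\ref{newMedCor}. The cover polynomial of \cite{cover_polynomial} is bivariate, one variable marking the number of directed cycles in a path-cycle cover of the relevant Ferrers digraph $D_{2n}$; the count $h_n$ arises from the chromatic-polynomial specialization, which discards this variable. Retaining it should produce the refined identity above, in which single directed Hamiltonian-cycle covers of $D_{2n}$ correspond to even-odd-drop cycles. The conjecture would then follow from an evaluation $c(D_{2n};0,1)=g_n$ of the Hamiltonian-cycle coefficient, which I would verify either directly or by comparison with the ID-tree and D-cycle enumerations.

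The hard part is exactly this tracking of cycles. The path to $h_n$ through Hetyei's formula (\ref{heteq2}) and the chromatic polynomial records only the \emph{total} number of even-odd-drop permutations and is blind to their cycle structure, so some genuinely new input is required. In the bijective approach the obstacle is that the defining constraints (parity conditions on the drops) are local at each position, whereas the single-cycle condition is global: a naive position-by-position surgery can merge or split cycles, so the construction must be engineered to preserve cyclicity. In the cover-polynomial approach the obstacle is extracting and evaluating the Hamiltonian-cycle coefficient of a Ferrers digraph, which the specialization already in hand does not supply. Resolving either of these---equivalently, proving the refined cycle-enumeration identity---is the crux of the argument.
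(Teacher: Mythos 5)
There is a genuine gap, and it is worth being explicit about its nature: the statement you were given is Conjecture~\ref{newGenCon}, which the paper itself does not prove --- the authors only report computer verification for $n \le 6$ and state the stronger Conjecture~\ref{newGenCon2} from which it would follow. Your submission is likewise not a proof. Everything you establish is a correct \emph{reformulation}: the identity $-\chi_{\mathcal L(\mathcal H_{2n-1})}(-t)=\sum_{\sigma\in\mathcal{D}_{2n}}t^{c(\sigma)-1}$ does follow from Theorem~\ref{mobth}, its value $g_n$ at $t=0$ from (\ref{geneq}), the equinumerosity of $\mathcal D_{2n}$ with the even-odd-drop permutations from Corollary~\ref{newMedCor} and Theorem~\ref{genth}(2), and your observation that the two cycle families agree at odd positions and differ only in the regime allowed at even positions is accurate. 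But the ``refined identity'' you then write down is precisely Conjecture~\ref{newGenCon2}(2) of the paper, so at this point you have translated the conjecture into an equivalent (in fact stronger) open statement rather than made progress on it.

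Neither of your two proposed routes is executed. The bijection $\beta:\mathcal{DC}_{2n}\to\{\text{even-odd-drop cycles}\}$ is never constructed: ``local re-routing at the even positions'' is exactly the kind of position-by-position surgery that, as you yourself concede, can split a single cycle into several, and no mechanism is offered to certify cyclicity; likewise the hoped-for ``second canonical reading'' of an ID tree producing an even-odd-drop cycle is named but not defined, and there is no evidence such a reading exists. The cover-polynomial route reduces the conjecture to evaluating the Hamiltonian-cycle coefficient of the Ferrers digraph, which you propose to verify ``directly or by comparison with the D-cycle enumeration'' --- but that evaluation \emph{is} the conjecture restated, so this route is circular as presented. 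In short, what you call ``the crux of the argument'' is the entire mathematical content of the statement, and it is left unresolved; the proposal is a reasonable research plan but cannot be accepted as a proof.
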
 
We have verified this conjecture by computer for $n \le 6$.
The following  conjecture along with Theorem~\ref{genth} implies both Corollary~\ref{newMedCor} and Conjecture~\ref{newGenCon}.
The first consequence follows by Theorem~\ref{mobth}.

\begin{conjecture}  \label{newGenCon2} Let $A$ be a finite subset of $Z_{> 0}$.  Then the  number of cycles on $A$ with only even-odd drops is equal to the number of D-cycles on $A$.  Consequently, for all $n \ge 1$,
\begin{enumerate} 
\item if 
 $\pi \in \Pi_{\Gamma_{2n}}$ then $|\mu_{ \Pi_{\Gamma_{2n}}}(\hat 0,x)|$ is equal to the number of permutations on $[2n]$ with only even-odd drops and   cycle support $\pi$.
 \item for all $j$, the number of permutations on $[2n]$ with $j$ cycles and with only even-odd drops is equal to the number of D-permutations with $j$ cycles.
 \end{enumerate}
 \end{conjecture}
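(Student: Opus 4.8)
The plan is to deduce the two numbered consequences from the first sentence of the conjecture, and then to reduce that first sentence to a purely \emph{permutation-level} identity, one instance of which is already a theorem. Write $\mathcal{O}_A$ for the set of permutations of a finite $A\subseteq\Z_{>0}$ all of whose drops are even-odd, and $\mathcal{OC}_A$ for its cyclic elements; the first assertion is then $|\mathcal{OC}_A|=|\mathcal{DC}_A|$ for every $A$. The first observation is that both defining conditions are \emph{local}, i.e. they constrain the pairs $(i,\sigma(i))$ one at a time, so $\sigma\in\mathcal{D}_A$ (resp.\ $\sigma\in\mathcal{O}_A$) if and only if every cycle of $\sigma$ is a D-cycle (resp.\ an even-odd-drop cycle) on its support. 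A short check shows that any nonsingleton cycle in either class has odd minimum and even maximum — for an even-odd-drop cycle one reads this off the drop into the minimum and the drop out of the maximum — so the cycle support of any member of $\mathcal{D}_{2n}\cup\mathcal{O}_{2n}$ lies in $\Pi_{\Gamma_{2n}}$. Granting $|\mathcal{OC}_B|=|\mathcal{DC}_B|$ for all $B$, statement~(1) follows from Theorem~\ref{mobth}: for $\pi\in\Pi_{\Gamma_{2n}}$,
\[
|\mu_{\Pi_{\Gamma_{2n}}}(\hat 0,\pi)|\;=\;\prod_{B\in\pi}|\mathcal{DC}_B|\;=\;\prod_{B\in\pi}|\mathcal{OC}_B|\;=\;|\{\sigma\in\mathcal{O}_{2n}:\cyc(\sigma)=\pi\}|,
\]
and statement~(2) follows by summing this over all $\pi\in\Pi_{\Gamma_{2n}}$ with exactly $j$ blocks.

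Everything thus rests on $|\mathcal{OC}_A|=|\mathcal{DC}_A|$, and here I would use locality in the reverse direction. Locality gives the partition-lattice identity
\[
|\mathcal{D}_A|\;=\;\sum_{\pi\vdash A}\ \prod_{B\in\pi}|\mathcal{DC}_B|,
\]
together with the identical relation for $\mathcal{O}$. Since the term $\pi=\hat 1$ contributes the cyclic count with coefficient $1$ while every other term involves only proper subsets, this relation is invertible (M\"obius inversion on $\Pi_A$); hence by induction on $|A|$ the cyclic identity for all $A$ is \emph{equivalent} to the permutation identity
\[
|\mathcal{D}_A|\;=\;|\mathcal{O}_A|\qquad\text{for every finite }A\subseteq\Z_{>0}.
\]
Both sides depend only on the parity word of the sorted elements of $A$, and for the alternating word — that is, $A=[2n]$ — this is already known: $|\mathcal{D}_{2n}|=h_n$ by Theorem~\ref{genth}(2) and $|\mathcal{O}_{2n}|=h_n$ by Corollary~\ref{newMedCor}. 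So the whole conjecture is reduced to proving $|\mathcal{D}_A|=|\mathcal{O}_A|$ for an \emph{arbitrary} parity word.

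To prove this last identity I would seek a bijection $\mathcal{D}_A\to\mathcal{O}_A$. The two classes share the constraint that no drop occurs at an odd position; they differ only in how they treat the even elements, with $\mathcal{D}_A$ forbidding excedances at even \emph{positions} and $\mathcal{O}_A$ forbidding drops onto even \emph{values}. Passing $\mathcal{O}_A$ to inverses converts ``no drop onto an even value'' into ``no excedance at an even position,'' so $\mathcal{O}_A^{-1}$ and $\mathcal{D}_A$ then agree on their even constraint and differ only on the odd elements, where $\mathcal{D}_A$ imposes a position condition and $\mathcal{O}_A^{-1}$ the dual value condition. The most promising route is to encode both families as colored excedent functions through the Dumont--Randrianarivony bijection of Proposition~\ref{DuRaProp}, whose property~(3) turns $\sigma(j)\le j$ into membership of $j$ in the image; applying it to $\sigma$ for $\mathcal{D}_A$ and to $\sigma^{-1}$ for $\mathcal{O}_A$, one hopes to land both in a single set of staircase tableaux with a parity-compatible filling, exactly as the alternating case was treated via the set $\mathcal{G}_{2n}$ in Lemma~\ref{bijectionProp}. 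Alternatively one may express $|\mathcal{D}_A|$ and $|\mathcal{O}_A|$ as permanents of two $0/1$ matrices read off from the parity word and look for permanent-preserving row/column moves carrying one to the other.

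The main obstacle is precisely this final correspondence. The two supports are genuinely different — $\mathcal{O}_A$ permits excedances at even positions while $\mathcal{D}_A$ does not — so the bijection cannot be the identity, the inverse, or a simple conjugation; indeed one checks on $[6]$ that inverting a D-cycle need not produce an even-odd-drop permutation. The alternating coloring is special in that the parities interleave perfectly with the main diagonal, which is exactly what lets the surjective-staircase model apply cleanly; for a general parity word this interleaving breaks, and the crux of any proof will be to produce the colored tableau correspondence (or the permanent identity) \emph{uniformly} in the parity word.
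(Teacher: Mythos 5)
There is a genuine gap, and it sits exactly where you place it yourself: the identity $|\mathcal{D}_A|=|\mathcal{O}_A|$ (equivalently, after your M\"obius inversion on $\Pi_A$, the cyclic identity $|\mathcal{DC}_A|=|\mathcal{OC}_A|$) is never established for a general finite $A\subseteq\Z_{>0}$. Your preparatory steps are sound: the locality of both defining conditions, the check that nonsingleton cycles in either class have odd minimum and even maximum, the deduction of consequences (1) and (2) from the cyclic identity via Theorem~\ref{mobth}, the induction showing that the cyclic identity for all $A$ is equivalent to the permutation identity for all $A$, and the observation that both counts depend only on the parity word of $A$. But these only reformulate the conjecture. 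The one instance you can actually verify, $A=[2n]$ (where both sides equal $h_n$ by Theorem~\ref{genth}(2) and Corollary~\ref{newMedCor}), does not propagate even to statement (1): a block $B$ of a partition in $\Pi_{\Gamma_{2n}}$ need only have odd minimum and even maximum, and its parity word can be arbitrary in between (e.g.\ $B=\{1,3,4\}$ or $B=\{1,2,5,6\}$), so the alternating case covers essentially none of the blocks that occur. Your final paragraphs describe the missing bijection (or permanent identity) as an obstacle rather than supplying it, so what you have is a reduction, not a proof.

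For calibration: the paper offers no proof of this statement either --- it is stated as a conjecture and verified by computer only for $n\le 6$ --- so there is no argument of ours to measure yours against. Your reduction of the whole conjecture to the single uniform identity $|\mathcal{D}_A|=|\mathcal{O}_A|$ over all parity words is a sensible way to organize an attack, and your derivation of (1) and (2) from the first sentence matches what the paper asserts when it says the conjecture together with Theorem~\ref{genth} implies Corollary~\ref{newMedCor} and Conjecture~\ref{newGenCon}. But the combinatorial heart of the problem remains open in your write-up, as it does in the paper.
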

 We have verified (2) by computer for $n \le 6$.
 
\begin{proof}[Proof of Theorem~\ref{newcharth}]

The \emph{incomparability graph} $\text{inc}(P)$ of a finite poset $P$ is the graph  with vertex set $P$ and edge set $$E=\{ \{x,y\} : x \mbox{ and  $y$ are incomparable in } P \}.$$
Given a poset $P$, we say that a permutation $\sigma $ of the vertices of $P$ has a \emph{$P$-drop} $(x,\sigma(x))$ if $x >_P \sigma(x)$, and write $d(P,k)$ for the number of permutations of $P$ with exactly $k$ $P$-drops.

Next we observe that $\Gamma_{2n}$ is the incomparability graph  of a poset.  
Indeed, $\Gamma_{2n}$  is the incomparability graph of the poset $P_{2n}$ on $[2n]$ with order relation given by $x \leq_{P_{2n}} y$ if:
\begin{itemize}
\item $x \leq y$ and $x$ and $y$ have the same parity
\item $x < y$, $x$ is even and $y$ is odd.
\end{itemize}  
The Hasse diagram of $P_6$ and its incomparability graph $\Gamma_6$ are given below. 

\vspace{.1in}\begin{center}
\includegraphics{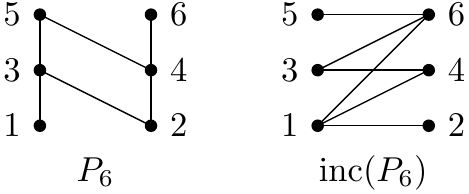}
\end{center}

Recall from Theorem~\ref{bondth} and Section~\ref{bondsubsec} that
$$\chi_{\mathcal L (\mathcal H_{2n-1})}(t)  = \chi_{\Pi_{\Gamma_{2n}}} (t) = t^{-1} {\rm ch}_{\Gamma_{2n}}(t),$$ 
where ${\rm ch}_G(t)$ denotes the chromatic polynomial of a graph $G$. 
Chung and Graham \cite[Corollary 7]{cover_polynomial} prove that for any finite poset $P$,
  the chromatic polynomial ${\rm ch}_{\text{inc}(P)}$ has the following expansion:
\begin{equation}\label{CGeq} {\rm ch}_{\text{inc}(P)}(t) = \sum_{k=0}^{|P|-1}d(P,k) \binom{t+k}{|P|}.\end{equation}
Hence since $\Gamma_{2n} =\text{inc}(P_n)$, 
$$\chi_{\mathcal L (\mathcal H_{2n-1})} (t) =\frac{1}{(2n)!} \sum_{k=0}^{2n-1} d(P_{2n},k) (t+1)^{(k)} (t-1)_{n-1-k}. $$
Now note that $(i,\sigma(i))$ is a $P_{2n}$-drop of $\sigma \in \sg_{2n}$ if and only if it is a drop of $\sigma$ that is not even-odd.  Hence $d(P_{2n},k)= d(2n,k)$ and the result holds. 
\end{proof}

We define an {\em even-odd descent} of a permutation $\sigma$ to be a descent   $\sigma(i)> \sigma(i+1)$ such that $\sigma(i)$ is even and $\sigma(i+1)$ is odd.  Since Foata's first fundamental transformation takes drops to descents,  the word ``drop''  can be  replaced by the word ``descent" in the definition of $d(2n,k)$ without changing the validity of  Theorem~\ref{newcharth} and Corollary~\ref{Newgencor}.  Similarly, Corollary~\ref{newMedCor} remains true if we replace the word ``drop" with ``descent".  

There is a  known permutation model for the Genocchi numbers that resembles the one we have been discussing.   Kitaev and  Remmel \cite{Classifying_Descents, Classifying_Descents_mod_k} conjectured and Burstein, Josuat-Verg\`{e}s, and Stromquist \cite{New_Dumont}  proved that the set of permutations in $\sg_{2n}$ with only even-even descents has cardinality equal to 
$g_{2n-2}$, where an even-even descent is defined in the obvious way.

\section{Further results: Type B and Dowling arrangements} \label{dowsec}
In a forthcoming paper \cite{LaWa}, we will present a type B analog of our results 
and a Dowling arrangement generalization that unifies our results in types A and B.
We define the {\em type B homogenized Linial arrangement} to be the hyperplane arrangement in $\R^{2n}$,
$$ \mathcal{H}^B_{2n-1} =\{ x_i \pm x_j = y_i : 1 \le i < j \le n \} \cup \{x_i =y_i: i = 1\dots, n\}.$$
Our type B analog of (\ref{introgenchareq})  yields the following generating function formula for the number of regions $r(\mathcal{H}^B_{2n-1}) $ of $\mathcal{H}^B_{2n-1}$:
  \begin{equation} \label{BBDeq} \sum_{n\geq 1} r(\mathcal{H}^B_{2n-1}) x^{n} = \sum_{n\geq 1}\frac{(2n)!x^n}{\prod_{k=1}^n( 1 + 2k(2k+1)x)}.\end{equation}

For $m \ge 1$, let $\omega_m$ be the primitive $m$th root of unity $e^{\frac{2\pi i }{m}}$.  For $m,n \ge 1$, we define the homogenized {\it Linial-Dowling arrangement}   to be the hyperplane arrangement in $\C^{2n}$ given by
 \begin{equation*} \mathcal H^m_{2n-1} :=  \{x_i -\omega_m^l x_j = y_i: 1 \le i < j \le n, \, 0 \le l < m\} \cup \{x_i =y_i : 1 \le i \le n \}.\end{equation*}  
 By intersecting $\mathcal{H}_{2n-1}^m$ with the subspace $y_1=y_2=\cdots = y_n = 0$, one gets the \emph{Dowling arrangement} $\mathcal A_n^m$ in $\C^n$. The intersection lattice of $\mathcal A_n^m$ is isomorphic to $\Pi_{n+1}$ when $m=1$ and to the type $B$ partition lattice $\Pi_n^B$ when $m=2$.
   By introducing this  Dowling analog of the homogenized Linial arrangement, we obtain unifying generalizations of the  types A and B results.  For instance, we obtain  the $m$-analog of  (\ref{introgenchareq}),
$$\sum_{n\geq 1}\chi_{ \mathcal L(\mathcal H^m_{2n-1})}(t) \, x^n =  \sum_{n\geq 1}\frac{  (t-1)_{n,m} (t-m)_{n-1,m} \,x^n}{\prod_{k=1}^n(1-mk(t-mk)x)},$$
where $(a)_{n,m} = a(a-m)(a-2m)\cdots(a-(n-1)m)$.
Note that this reduces to 
(\ref{BBDeq}) when we set $m=2$ and $t=-1$.

 We also obtain an $m$-analog of the  formula $g_{n}=|\mu_{\mathcal L(\mathcal{H}_{2n-1})}(\hat 0, \hat 1)| $ involving a  well studied polynomial analog of the Genocchi numbers known as the Gandhi polynomials \cite{Gandhi_article, sur_une_extension}.
 
 For a synopsis of some of our results on the homogenized Linial-Dowling arrangement, see the extended abstract \cite[Section 4]{Extended_Abstract}.

\section*{Acknowledgements}
The authors thank Jos\'{e} Samper for a valuable suggestion pertaining to bipartite graphs.    MW thanks G\'abor Hetyei for introducing her to his work on this topic when she visited him at the University of North Carolina, Charlotte during her Hurricane Irma evacuation, and  for his hospitality.

\bibliographystyle{amsplain}
\bibliography{HLBiblio}

\end{document}